\newtheorem{theorem}{Theorem}[section]
\newtheorem{lemma}[theorem]{Lemma}
\newtheorem{corollary}[theorem]{Corollary}
\theoremstyle{definition}
\newtheorem{definition}[theorem]{Definition}
\theoremstyle{remark}
\newtheorem{remark}[theorem]{Remark}
\numberwithin{equation}{section}
\title[The weak solutions to complex Hessian equations  ]{The weak solutions to complex Hessian equations}
\thanks{}
\author{Wei Sun}
\address{Institute of Mathematical Sciences, ShanghaiTech University, Shanghai, China}
\email{sunwei@shanghaitech.edu.cn}
\begin{document}
\setlength{\baselineskip}{1.2\baselineskip}

\begin{abstract}

In this paper, we shall study existence of weak solutions to complex Hessian equations. With appropriate assumptions, it is possible to obtain  weak solutions in pluripotential sense.

\end{abstract}

%\subjclass[2010]{Primary 53C21; Secondary 35J65, 58J32}

\maketitle

%\tableofcontents

\section {Introduction}

Let $(M,\omega)$ be a compact K\"ahler manifold without boundary of complex dimension $n \geq 2$. We are concerned with complex Hessian equation:
\begin{equation}
\label{equation-hessian}
	(\chi + \sqrt{-1} \partial\bar\partial\varphi)^m\wedge\omega^{n - m} = e^{mf} \omega^n ,\quad ess \sup_M \varphi = 0
\end{equation} 
where $2\leq m \leq n$ and $\chi \in \bar\Gamma^m_\omega$. 
In this paper, we say $\chi \in \bar\Gamma^m_\omega$ if $\chi$ is a real $(1,1)$-form and its eigenvalue set with respect to $\omega$ is contained in $\Gamma^m \subset\mathbb{R}^n$. 
Complex Hessian equation can be viewed as a direct extension of complex Monge-Amp\`ere equation.

In domains of $\mathbb{C}^n$, Li~\cite{Li2004} studied the Dirichlet problem with smooth data. Later, Blocki~\cite{Blocki2005} began to establish potential theory for complex Hessian equation on $\mathbb{C}^n$, and described maximal $m$-subharmonic function. 
Dinew and Kolodziej~\cite{DinewKolodziej2014} further developed the potential theory and obtained a priori estimates for weak solutions in pluripotential sense.

On complex manifolds, complex Hessian equation turns to be more difficult than complex Monge-Amp\`ere equation, although the latter seems more non-linear. Hou~\cite{Hou2009} solved Equation~\ref{equation-hessian} with smooth data under the assumptions that $\chi = \omega$ and $\omega$ has nonnegative holomorphic bisectional curvature (see also \cite{Jbilou2010}). Hou, Ma and Wu~\cite{HouMaWu2010} removed the curvature assumption and proved a sharp $C^2$ estimate. Dinew and Kolodziej~\cite{DinewKolodziej2017} obtained a Liouville theorem for complex Hessian equation globally defined on $\mathbb{C}^n$, and then derived a gradient estimate from the sharp $C^2$ estimate of Hou-Ma-Wu by blow-up argument. Later, the author~\cite{Sun2017u}, Zhang~\cite{Zhang2017} and Sz\'ekelyhidi~\cite{Szekelyhidi} extended the results to more general assumptions, e.g. $\omega$ is Hermitian, or $\chi \neq \omega$.

In this paper, we are concerned with weak solutions to Equation~\eqref{equation-hessian} when the given data are not regular enough.  
The difficulty is that the positivity of $\chi + \sqrt{-1} \partial\bar\partial \varphi \in \bar\Gamma^m_\omega$ is pointwisely dependent on $\omega$, which is different than complex Monge-Amp\`ere equation. 
Without the extra assumption of nonnegative holomorphic bisectional curvature, we cannot localize the equation to a domain in $\mathbb{C}^n$, and a lot of techniques cannot be applied, e.g., regularization and Hartogs Lemma. 
%The definition of $m$-$\omega$-subharmonic function is provided in \cite{DinewKolodziej2014}.
A $C^2$-smooth function is called $\varphi$ $m$-$\omega$-subharmonic if 
\begin{equation*}
	(\sqrt{-1} \partial\bar\partial \varphi)^k \wedge \omega^{n - k} \geq 0 , \qquad \forall k = 1, \cdots , m .
\end{equation*}
Then we can define $m$-$\omega$-subharmonic function in current sense~\cite{DinewKolodziej2014} in a formally strong way.
\begin{definition}

In a neighborhood of any given point, there exists a decreasing to $\varphi$  sequence of $C^2$-smooth $m-\omega-sh$ functions $\varphi_j$ such that  for any set of $C^2$ smooth $m-\omega-sh$ functions $v_1, \cdots , v_{m-1}$ the inequality
\begin{equation*}
\sqrt{-1} \partial\bar\partial \varphi_j \wedge \sqrt{-1} \partial\bar\partial v_1 \wedge \cdots \wedge \sqrt{-1} \partial\bar\partial v_{m - 1} \wedge \omega^{n - m} \geq 0
\end{equation*}
is satisfied. Then we call $\varphi$ $m$-$\omega$-subharmonic function.

\end{definition}

Since $\chi$ is closed and hence has a local potential anywhere on $M$, we can simply adapt the definition for complex Hessian equation~\eqref{equation-hessian}.
\begin{definition}

In a neighborhood of any given point, there exists a decreasing to $\varphi$  sequence of $C^2$-smooth functions $\varphi_j$ such that  for any set of $C^2$ smooth $m-\omega-sh$ functions $v_1, \cdots , v_{m-1}$ the inequality
\begin{equation*}
\left(\chi + \sqrt{-1} \partial\bar\partial \varphi_j \right) \wedge \sqrt{-1} \partial\bar\partial v_1 \wedge \cdots \wedge \sqrt{-1} \partial\bar\partial v_{m - 1} \wedge \omega^{n - m} \geq 0
\end{equation*}
is satisfied. Then we call $\varphi$ $m$-$(\chi,\omega)$-subharmonic function.
\end{definition}

%\begin{remark}
For complex Hessian equation on K\"ahler manifolds, it is possible to derive a definition of $m$-pluripotential solution as in \cite{Blocki2005}\cite{DinewKolodziej2014}.  
 %By the $L^\infty$ estimate in \cite{SuiSun2021}, it is possible to find a bounded $m$-pluripotential solution when the right-sided is in $L^p$ $(p > \frac{n^2}{m})$.
%\end{remark}
%
\begin{definition}
Function $\varphi \in L^\infty$ is an admissible solution to complex Hessian equation~\eqref{equation-hessian}, 
if it holds true that $\varphi$ is $m$-$(\chi,\omega)$-subharmonic 
and in pluripotential sense
\begin{equation*}
	(\chi + \sqrt{-1} \partial\bar\partial \varphi)^m \wedge \omega^{n - m} = e^{mf} \omega^n .
\end{equation*}

\end{definition}

The main result of this paper is the following theorem.
\begin{theorem}
\label{main-theorem}

Let $\chi \in \bar\Gamma^m_\omega$ be a closed real $(1,1)$-form, and $\tilde \chi$ be a semipositive and big closed $(1,1)$-form on compact K\"ahler manifolds $(M,\omega)$ without boundary. 
Then there is a unique bounded admissible solution in pluripotential sense to 
\begin{equation}
\label{equation-1}
	(\chi + \tilde \chi + \sqrt{-1} \partial\bar\partial \varphi)^m \wedge \omega^{n - m} = e^{m f} \omega^n , \qquad \sup_M \varphi = 0 ,
\end{equation}
satisfying 
\begin{equation*}
\int_M e^{mf} \omega^n = \int_M (\chi + \tilde \chi)^m \wedge \omega^{n - m} .
\end{equation*}
for $q > 1$ and $e^{nf} \in L^q$. 
In particular, $\varphi$ is smooth in the ample locus of $\tilde \chi$ given that the holomorphic bisectional curvature of $(M,\omega)$ is nonnegative and $e^f$ is smooth.

\end{theorem}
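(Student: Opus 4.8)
\emph{Strategy.} The plan is to adapt the pluripotential approach to degenerate complex Monge--Amp\`ere equations in big cohomology classes to the complex Hessian operator, using the potential theory of Dinew--Kolodziej. Since $\tilde\chi$ is semipositive, the eigenvalues of $\chi+\tilde\chi+\varepsilon\omega$ with respect to $\omega$ lie in the open cone $\Gamma^m$ for every $\varepsilon>0$; choosing smooth $f_\varepsilon$ with $e^{mf_\varepsilon}\to e^{mf}$ in $L^{qn/m}$ and normalizing constants $c_\varepsilon$ with $c_\varepsilon\int_M e^{mf_\varepsilon}\omega^n=\int_M(\chi+\tilde\chi+\varepsilon\omega)^m\wedge\omega^{n-m}$, I would first solve the non-degenerate equations
\begin{equation*}
(\chi+\tilde\chi+\varepsilon\omega+\sqrt{-1}\partial\bar\partial\varphi_\varepsilon)^m\wedge\omega^{n-m}=c_\varepsilon e^{mf_\varepsilon}\omega^n,\qquad \sup_M\varphi_\varepsilon=0,
\end{equation*}
which admit smooth admissible solutions $\varphi_\varepsilon$ by the existence theory recalled in the introduction (\cite{HouMaWu2010,Sun2017u,Zhang2017,Szekelyhidi}); the assumed mass identity forces $c_\varepsilon\to 1$.

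\emph{Uniform bound and passage to the limit.} The heart of the existence part is a uniform estimate $\|\varphi_\varepsilon\|_\infty\leq C$ independent of $\varepsilon$. This is precisely the Kolodziej-type a priori estimate for the complex Hessian operator of Dinew--Kolodziej \cite{DinewKolodziej2014}: the hypothesis $e^{nf}\in L^q$ with $q>1$ is equivalent to $e^{mf}\in L^{qn/m}$ with $qn/m>n/m$, which is the integrability threshold that estimate requires, and the reference forms $\chi+\tilde\chi+\varepsilon\omega$ all lie between $\chi+\tilde\chi$ and $\chi+\tilde\chi+\omega$, so the constant is uniform. Each $\varphi_\varepsilon$ is $m$-$(\chi+\tilde\chi+\omega,\omega)$-subharmonic, hence $\Delta_\omega\varphi_\varepsilon$ is bounded below uniformly and $\{\varphi_\varepsilon\}$ is relatively compact in $L^1$; combining this with the stability estimate for the complex Hessian operator (again \cite{DinewKolodziej2014}, extended to the family of reference forms $\chi+\tilde\chi+\varepsilon\omega$) I would upgrade the convergence to $\varphi_\varepsilon\to\varphi$ uniformly, with $\varphi$ continuous and $\sup_M\varphi=0$. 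Letting $\varepsilon\to0$ in the defining inequalities shows $\varphi$ is $m$-$(\chi+\tilde\chi,\omega)$-subharmonic — the extra $\varepsilon\omega$ disappears in the limit — and the continuity of the Hessian operator along uniformly convergent bounded $m$-$\omega$-subharmonic functions identifies the limit as $(\chi+\tilde\chi+\sqrt{-1}\partial\bar\partial\varphi)^m\wedge\omega^{n-m}=e^{mf}\omega^n$ in the pluripotential sense. Uniqueness of the bounded admissible solution then follows from the comparison (domination) principle for the complex Hessian operator, since $e^{mf}\omega^n$ charges no $m$-polar set.

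\emph{Regularity on the ample locus.} Assume now $e^f$ smooth and $(M,\omega)$ of nonnegative holomorphic bisectional curvature, and set $\Omega=\mathrm{Amp}(\tilde\chi)$. Bigness and semipositivity of $\tilde\chi$ provide a quasi-plurisubharmonic $\rho$, smooth on $\Omega$ with analytic singularities exactly along $M\setminus\Omega$, such that $\tilde\chi+\sqrt{-1}\partial\bar\partial\rho\geq\varepsilon_0\omega$; on any compact $K$ contained in $\Omega$, $\rho$ is smooth and bounded and furnishes a uniformly strictly $m$-positive lower barrier for the reference forms of the approximating equations. The plan is then to obtain uniform $C^k(K)$ bounds on $\varphi_\varepsilon$ by running, localized on $K$ and weighted by $\rho$, the sharp second-order estimate of Hou--Ma--Wu \cite{HouMaWu2010} and the gradient estimate of Dinew--Kolodziej \cite{DinewKolodziej2017}, together with the argument of Hou \cite{Hou2009}; the nonnegative holomorphic bisectional curvature is what makes this possible, since (as observed in the introduction) without it the complex Hessian equation cannot be transplanted to a domain of $\mathbb{C}^n$ and the localization and blow-up steps break down. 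Once $C^0,C^1,C^2$ bounds on $K$ are in hand, Evans--Krylov and a Schauder bootstrap give uniform $C^k(K)$ bounds for every $k$, so the uniform limit $\varphi$ is smooth on $\Omega$ and solves \eqref{equation-1} classically there.

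\emph{Main obstacle.} The critical difficulty is the last step: securing higher-order a priori estimates that are uniform on compact subsets of $\Omega$ while the reference form degenerates towards the boundary of the ample locus. Unlike the Monge--Amp\`ere case, the $m$-positivity of $\chi+\tilde\chi+\varepsilon\omega+\sqrt{-1}\partial\bar\partial\varphi_\varepsilon$ is pointwise tied to $\omega$ and does not localize, and keeping the operator uniformly elliptic away from $M\setminus\Omega$ — in particular bounding its smallest eigenvalue below — is exactly what forces the nonnegative holomorphic bisectional curvature hypothesis in the regularity statement.
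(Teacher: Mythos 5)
There is a genuine gap at the heart of the existence argument: the uniform $L^\infty$ bound for $\varphi_\varepsilon$ as $\varepsilon\to 0$. You invoke the Dinew--Kolodziej a priori estimate and claim the constant is uniform because the reference forms $\chi+\tilde\chi+\varepsilon\omega$ are sandwiched between $\chi+\tilde\chi$ and $\chi+\tilde\chi+\omega$. That sandwich buys nothing: the lower form $\chi+\tilde\chi$ is degenerate (it is only semipositive and big, and $\chi$ itself is merely in $\bar\Gamma^m_\omega$, not nonnegative), and Kolodziej-type constants degenerate precisely as the background class loses positivity. In the Monge--Amp\`ere case the analogous uniform bound for semipositive and big classes is the content of Eyssidieux--Guedj--Zeriahi and requires substantial work beyond Kolodziej's theorem; for the Hessian operator no such degenerate-class estimate exists in \cite{DinewKolodziej2014}, whose theory is moreover built for $m$-$\omega$-subharmonic functions with Kähler background, not for a reference form with $\chi\in\bar\Gamma^m_\omega$ possibly non-nonnegative. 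This uniform estimate is exactly what the paper proves (Theorem~\ref{theorem-4-3}) by a different mechanism: an auxiliary complex Monge--Amp\`ere equation in the spirit of Guo--Phong--Tong, comparison of the solution with a power of the auxiliary potential via a maximum principle, the $\alpha$-invariant exponential integrability, a De Giorgi iteration (Lemma~\ref{lemma-4-1}), and an entropy-type bound on $\int_M(-\varphi_t)e^{nf}\omega^n$ to control the quantity $A_s$ uniformly in $t$. Without an argument of this kind your limit $\varphi$ could a priori be unbounded (or the family non-compact), and the rest of the construction collapses. The same objection applies to your upgrade from $L^1$ to uniform convergence: you cite a stability estimate of \cite{DinewKolodziej2014} ``extended to the family of reference forms,'' but such an extension is not available off the shelf and is what the paper establishes (Theorems~\ref{theorem-3-3}, \ref{theorem-4-4}, \ref{theorem-4-5}) by the same auxiliary-equation method.

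The uniqueness step has a similar problem: you appeal to a comparison/domination principle for the Hessian operator ``since $e^{mf}\omega^n$ charges no $m$-polar set,'' but no such principle is established in the cited literature for bounded admissible functions relative to a degenerate reference form $\chi+\tilde\chi$ with $\chi$ only in the closed cone; the paper instead gives a self-contained proof (Section~5) by an integration-by-parts energy argument using concavity of $S_m^{1/m}$, Maclaurin's inequality, Cauchy--Schwarz and Stokes, reducing to the vanishing of $\int_M\sqrt{-1}\partial(\varphi_1-\varphi_2)\wedge\bar\partial(\varphi_1-\varphi_2)\wedge(\chi+\tilde\chi)\wedge\omega^{n-2}$. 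Your regularity sketch is closer in spirit to the paper, but note that the paper avoids any localization or gradient estimate: it runs a global maximum principle on $w+A(\rho-\varphi_t)$ with Boucksom's function $\rho$, whose blow-down at $M\setminus\mathrm{Amp}(\tilde\chi)$ forces the maximum into the ample locus, and uses nonnegative bisectional curvature only to control the commutation terms; your plan to localize Hou--Ma--Wu and the Dinew--Kolodziej blow-up gradient estimate on compact subsets with a degenerating background would need a genuinely new argument at exactly that point.
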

%\begin{remark}
%	The extra assumption is somehow necessary,
%	but I am not sure if it is optimal.
%	An easy example is that if $\chi = 0$ in complex Hessian equation~\eqref{equation-hessian}, then obviously there is no solution.
%\end{remark}

The main technique is a modification of Guo-Phong-Tong~\cite{GPT2021}'s argument, which combines the methods of Wang-Wang-Zhou~\cite{WangWangZhou2021} and Chen-Cheng~\cite{Chen-Cheng}. One of the improvement in our argument is that there is no need to require $\chi$ to be nonnegative in some sense. Moreover, the modification makes it possible to adapt Guo-Phong-Tong~\cite{GPT2021}'s argument to more general equations~\cite{SuiSun2021}. 
%We shall give a brief survey in a later paper~\cite{Sun202304}.

We shall also study viscosity solutions in this paper. There is no difficulty in definition, and the approach is the same as that for pluripotential solutions. In fact, we shall obtain the weak solution by constructing a decreasing function sequence, and then identify whether the solution is pluripotential or viscous.

This paper is organized as follows. In Section 2, we shall briefly recall some notations and formulae which will be in need later on. In Section 3, we shall work on the non-degenerate case of complex Hessian equations, and construct the weak solutions in viscous sense or pluripotential sense. In Section 4, we shall work on the degenerate case of complex Hessian equations, and construct the weak solutions in pluripotential sense. It is worth a mention that the degenerate case means that a background metric is {\em degenerate}. In fact, the corresponding complex Monge-Amp\`ere equation is said to be {\em singular}. In Section 5, the uniqueness will be proven for complex Hessian equations on compact K\"ahler manifolds in current sense. In Section 6, we shall discover the regularity of weak solutions when holomorphic bisectional curvature is nonnegative.

\medskip
\section{Preliminaries}

In this section, we shall state some elementary notations  and formulae.

\medskip
\subsection{Elementary symmetric polynomial}

The $m$-th elementary symmetric polynomial is 
\begin{equation}
	S_m (\bm{\lambda}) := \sum_{0 < j_1 < j_2 < \cdots < j_m \leq 0} \lambda_{j_1} \lambda_{j_2} \cdots \lambda_{j_m}
\end{equation}
where $\bm{\lambda} = (\lambda_1, \cdots ,\lambda_n) \in \mathbb{R}^n$. The $m$-convex cone is defined by
\begin{equation}
	\Gamma^m = \{\bm{\lambda} \in \mathbb{R}^n | S_1 (\bm{\lambda}) > 0, S_2 (\bm{\lambda}) > 0 , \cdots , S_m (\bm{\lambda}) > 0\} .
\end{equation}
It is well known that $S_m$ has the following properties. 
\begin{theorem}[Maclaurin's inequality]
If $\bm{\lambda} \in \Gamma^m$, then 
\begin{equation}
	\left( \frac{S_j (\bm{\lambda})}{C^j_n}\right)^{\frac{1}{j}} 
	\geq
	\left( \frac{S_i (\bm{\lambda})}{C^i_n}\right)^{\frac{1}{i}}  
	,
	\qquad
	\text{for all } 1 \leq j \leq i \leq m ,
\end{equation}
where
$C^i_n := \frac{n!}{i! (n - i)!}$.
\end{theorem}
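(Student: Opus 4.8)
\emph{Proof strategy.} Write $p_k := S_k(\bm\lambda)/C^k_n$ for $0 \le k \le n$, with the convention $p_0 = 1$; the inequality to be proved is $p_j^{1/j} \ge p_i^{1/i}$ whenever $1 \le j \le i \le m$. The plan is to deduce this from Newton's inequalities
\[
	p_k^2 \ge p_{k-1}\, p_{k+1}, \qquad 1 \le k \le n-1,
\]
which hold for \emph{every} $\bm\lambda \in \mathbb{R}^n$; the hypothesis $\bm\lambda \in \Gamma^m$ enters only afterwards, to supply positivity. First I would prove Newton's inequalities by the classical Rolle-type argument. The polynomial $P(t) = \prod_{i=1}^n (t+\lambda_i) = \sum_{k=0}^n C^k_n\, p_k\, t^{n-k}$ splits over $\mathbb{R}$, and the property of having only real roots is preserved both by differentiation (Rolle's theorem) and by the reversal $Q(t) \mapsto t^{\deg Q}\, Q(1/t)$. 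Applying $k-1$ derivatives to $P$, then a reversal, then $n-k-1$ further derivatives, leaves a quadratic which still has only real roots and whose three coefficients are, up to explicit positive constants built from binomial coefficients and factorials, $p_{k-1}$, $p_k$ and $p_{k+1}$; its discriminant being nonnegative is exactly $p_k^2 \ge p_{k-1}p_{k+1}$ once those constants are tracked. Degenerate configurations (repeated $\lambda_i$, or a reversal that lowers the degree) are handled by a density-and-continuity argument, perturbing $\bm\lambda$ within a dense set and passing to the limit.

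Next I would use the membership $\bm\lambda \in \Gamma^m$: by definition $S_1(\bm\lambda), \dots, S_m(\bm\lambda) > 0$, hence $p_1, \dots, p_m > 0$ (and $p_0 = 1$). Put $r_k := p_k/p_{k-1} > 0$ for $1 \le k \le m$. For each $k$ with $1 \le k \le m-1$ all of $p_{k-1}, p_k, p_{k+1}$ are positive, so Newton's inequality rearranges to $r_{k+1} = p_{k+1}/p_k \le p_k/p_{k-1} = r_k$; hence $r_1 \ge r_2 \ge \cdots \ge r_m$. Telescoping with $p_0 = 1$ gives $p_k = r_1 r_2 \cdots r_k$, so $p_k^{1/k}$ is the geometric mean of $r_1, \dots, r_k$.

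Finally I would observe that the geometric mean of an initial segment of a non-increasing positive sequence is non-increasing in the length of the segment: since $r_{k+1}$ is at most each of $r_1, \dots, r_k$, it is at most their geometric mean $(r_1\cdots r_k)^{1/k}$, whence
\[
	\bigl(p_{k+1}\bigr)^{1/(k+1)} = \bigl(r_1\cdots r_k\, r_{k+1}\bigr)^{1/(k+1)} \le \bigl((r_1\cdots r_k)^{(k+1)/k}\bigr)^{1/(k+1)} = (r_1\cdots r_k)^{1/k} = \bigl(p_k\bigr)^{1/k}.
\]
Chaining this from $i$ down to $j$ yields $p_j^{1/j} \ge p_i^{1/i}$ for all $1 \le j \le i \le m$, which, after substituting $p_k = S_k(\bm\lambda)/C^k_n$, is Maclaurin's inequality. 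The one genuinely substantial point is the proof of Newton's inequalities, and within it the careful bookkeeping of the binomial and factorial constants through the differentiate-reverse-differentiate reduction together with the disposal of the degenerate cases; everything downstream is elementary AM-GM manipulation.
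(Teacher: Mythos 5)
The paper does not prove this statement at all: it is quoted in the preliminaries as a well-known property of the elementary symmetric polynomials, with no argument given. Your proposal is therefore not comparable to a paper proof, but judged on its own it is correct and complete in outline: Newton's inequalities $p_k^2 \ge p_{k-1}p_{k+1}$ do hold for arbitrary real $\bm{\lambda}$ (the differentiate--reverse--differentiate reduction plus a perturbation argument for degenerate cases is the standard route), and you correctly isolate the one subtlety that matters here, namely that $\Gamma^m$ does \emph{not} give $\lambda_i > 0$ but only $S_1,\dots,S_m > 0$, which is exactly enough positivity to divide Newton's inequalities and obtain the non-increasing ratios $r_1 \ge \cdots \ge r_m > 0$; the textbook statement of Maclaurin's inequality usually assumes all $\lambda_i$ positive, so this adaptation is the genuinely relevant point and you handle it properly. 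The telescoping identity $p_k = r_1\cdots r_k$ and the geometric-mean monotonicity step are both sound, so chaining from $i$ down to $j$ gives the claimed inequality on all of $\Gamma^m$. The only part left at the level of strategy rather than proof is the constant bookkeeping and the degenerate-root discussion inside the Newton step, but the plan you describe for it is the standard one and would go through.
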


\begin{theorem}[G\r{a}rding's inequality]
For any $\bm{\lambda}, \bm{\eta} \in \Gamma^m$,
\begin{equation}
	\sum^n_{i = 1} \eta_i S_{m - 1;i} (\bm{\lambda}) \geq m S^{\frac{1}{m}}_m (\bm{\eta}) S^{\frac{m - 1}{m}}_m (\bm{\lambda}) ,
\end{equation}
where $S_{m-1;i} (\bm{\lambda}) := S_{m - 1} ((\lambda_1, \cdots , \lambda_{i - 1}, 0 , \lambda_{i+1}, \cdots , \lambda_m))= \frac{\partial S_m}{\partial \lambda_i}(\bm{\lambda})$.
In particular, $S^{\frac{1}{m}}_m(\bm{\lambda})$ is concave in $\Gamma^m$ and homogeneous of degree $1$.

\end{theorem}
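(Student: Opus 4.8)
The plan is to prove Gårding's inequality $\sum_{i=1}^n \eta_i S_{m-1;i}(\bm{\lambda}) \geq m\, S_m^{1/m}(\bm{\eta})\, S_m^{(m-1)/m}(\bm{\lambda})$ for $\bm{\lambda},\bm{\eta}\in\Gamma^m$, together with the stated consequences. I would first record the polarization setup: for $\bm{\lambda}^{(1)},\dots,\bm{\lambda}^{(m)}\in\mathbb{R}^n$ let $S_m(\bm{\lambda}^{(1)},\dots,\bm{\lambda}^{(m)})$ denote the fully symmetric multilinear form obtained by polarizing $S_m$, normalized so that $S_m(\bm{\lambda},\dots,\bm{\lambda})=S_m(\bm{\lambda})$. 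Then the left-hand side is exactly $m\, S_m(\bm{\eta},\bm{\lambda},\dots,\bm{\lambda})$, since differentiating $S_m$ in the direction $\bm{\eta}$ pulls out one slot of the symmetric form with a factor $m$ and $\partial S_m/\partial\lambda_i = S_{m-1;i}(\bm{\lambda})$. So the claim reduces to the ``mixed'' inequality
\begin{equation*}
S_m(\bm{\eta},\bm{\lambda},\dots,\bm{\lambda}) \;\geq\; S_m(\bm{\eta})^{1/m}\, S_m(\bm{\lambda})^{(m-1)/m},
\end{equation*}
which is the $m$-dimensional elementary-symmetric analogue of the Alexandrov–Fenchel / Brunn–Minkowski inequality.

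The core step is to establish the general mixed inequality $S_m(\bm{\lambda}^{(1)},\dots,\bm{\lambda}^{(m)}) \geq \prod_{k=1}^m S_m(\bm{\lambda}^{(k)})^{1/m}$ for $\bm{\lambda}^{(1)},\dots,\bm{\lambda}^{(m)}\in\bar\Gamma^m$, from which Gårding's inequality follows by taking $\bm{\lambda}^{(1)}=\bm{\eta}$ and $\bm{\lambda}^{(2)}=\cdots=\bm{\lambda}^{(m)}=\bm{\lambda}$. I would prove this by induction on $m$. The key analytic input is the following one-variable fact: if $p(t) = S_m(\bm{\eta}+t\bm{\lambda})$ is viewed as a polynomial in $t$ and $\bm{\lambda},\bm{\eta}\in\Gamma^m$, then $p$ has only real (indeed negative) roots — this is a classical hyperbolicity statement, provable by a Rolle/interlacing argument starting from the fact that $S_1$ is linear and using that $\partial_{\lambda_i}$ preserves the cone $\Gamma^{m}\to\Gamma^{m-1}$ together with Rolle's theorem to pass hyperbolicity down the derivatives. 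Given real-rootedness, Newton's inequalities applied to the coefficients of $p(t)$ (which are, up to binomial factors, the mixed quantities $S_m(\bm{\eta},\dots,\bm{\eta},\bm{\lambda},\dots,\bm{\lambda})$ with varying numbers of each) yield log-concavity of the sequence $a_j := S_m(\underbrace{\bm{\eta},\dots}_{j},\underbrace{\bm{\lambda},\dots}_{m-j})$, i.e. $a_j^2 \geq a_{j-1}a_{j+1}$; iterating log-concavity gives $a_1 \geq a_0^{1/m} a_m^{(m-1)/m}$, which is precisely the desired inequality with $a_0 = S_m(\bm{\lambda})$, $a_m = S_m(\bm{\eta})$, $a_1 = S_m(\bm{\eta},\bm{\lambda},\dots,\bm{\lambda})$. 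The positivity of all $a_j$ (needed to divide and to take roots) follows because mixed $S_m$ of cone elements is positive, itself a consequence of the inductive hypothesis or of the fact that the coefficients of a hyperbolic polynomial with all roots of one sign are all nonzero of one sign.

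For the two ``In particular'' assertions: homogeneity of degree $1$ of $S_m^{1/m}$ is immediate from $S_m$ being homogeneous of degree $m$. Concavity of $S_m^{1/m}$ on $\Gamma^m$ follows from Gårding's inequality in the standard way: for $\bm{\lambda},\bm{\eta}\in\Gamma^m$ and $s\in[0,1]$, write $g(s) := S_m^{1/m}(s\bm{\eta}+(1-s)\bm{\lambda})$ and compute $g''(s)\leq 0$, or more slickly use that $S_m^{1/m}$ is $1$-homogeneous and superadditive: superadditivity $S_m^{1/m}(\bm{\lambda}+\bm{\eta})\geq S_m^{1/m}(\bm{\lambda})+S_m^{1/m}(\bm{\eta})$ is equivalent to the mixed inequality for all $a_j$ (expand $S_m(\bm{\lambda}+\bm{\eta})=\sum_j \binom{m}{j} a_j$ and bound each $a_j \geq S_m(\bm{\lambda})^{(m-j)/m}S_m(\bm{\eta})^{j/m}$, then recognize the binomial theorem for $(S_m(\bm{\lambda})^{1/m}+S_m(\bm{\eta})^{1/m})^m$), and a $1$-homogeneous superadditive positive function on a convex cone is concave there.

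I expect the main obstacle to be establishing real-rootedness of $p(t)=S_m(\bm{\eta}+t\bm{\lambda})$ cleanly, equivalently the hyperbolicity of $S_m$ with respect to every vector in $\Gamma^m$; the cleanest route is the induction via Rolle's theorem exploiting the identity $\partial_{\lambda_i}S_m = S_{m-1;i}$ and the implication $\bm{\lambda}\in\Gamma^m \Rightarrow (\lambda_1,\dots,\widehat{\lambda_i},\dots)\in\Gamma^{m-1}$, but one must be careful to handle the case where some $\lambda_i$ coincide or vanish (boundary of the cone), which is why I would state and prove the inequality first on the open cone $\Gamma^m$ and then extend to $\bar\Gamma^m$ by continuity. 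Everything else — the polarization bookkeeping, Newton's inequalities, and the derivation of concavity — is routine once hyperbolicity is in hand.
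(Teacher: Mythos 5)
The paper does not prove this statement at all: it is quoted in the preliminaries as a well-known classical fact (Gårding's theory of hyperbolic polynomials), so there is no in-paper argument to compare yours against. Judged on its own, your outline follows the standard classical route, and most of it is sound: the identity $\sum_i \eta_i S_{m-1;i}(\bm{\lambda}) = m\,S_m(\bm{\eta},\bm{\lambda},\dots,\bm{\lambda})$ via polarization is correct; Newton's inequalities for a real-rooted $p(t)=S_m(\bm{\lambda}+t\bm{\eta})=\sum_j\binom{m}{j}a_j t^j$ do give $a_j^2\ge a_{j-1}a_{j+1}$ with all $a_j>0$; and the superadditivity-plus-homogeneity derivation of concavity is the standard Minkowski-type argument and is fine. (One slip: with your labels $a_0=S_m(\bm{\lambda})$, $a_m=S_m(\bm{\eta})$, log-concavity gives $a_1\ge a_0^{(m-1)/m}a_m^{1/m}$, not $a_1\ge a_0^{1/m}a_m^{(m-1)/m}$ as printed; the former is the desired inequality, so this is a transposition rather than a conceptual error.)

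The genuine soft spot is exactly the step you flag as the main obstacle, and the route you sketch for it does not work as described. Rolle's theorem transfers real-rootedness from a polynomial to its \emph{derivative}, so an induction ``starting from the fact that $S_1$ is linear'' and passing hyperbolicity upward is running in the wrong direction; the workable Rolle argument starts at the top, from $S_n(\bm{x}+t\mathbf{1})=\prod_i(x_i+t)$, and uses $S_m(\bm{x}+t\mathbf{1})=\frac{1}{(n-m)!}\frac{d^{\,n-m}}{dt^{\,n-m}}\prod_i(x_i+t)$ to conclude real-rootedness by repeated differentiation. Even then, this only establishes hyperbolicity of $S_m$ in the direction $\mathbf{1}$, whereas your application of Newton's inequalities needs real-rootedness of $t\mapsto S_m(\bm{\lambda}+t\bm{\eta})$ for an \emph{arbitrary} $\bm{\eta}\in\Gamma^m$. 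That every vector of the positivity component containing $\mathbf{1}$ is itself a hyperbolicity direction (and that the cone is convex, which you also implicitly use for positivity of the $a_j$) is precisely Gårding's theorem, proved by a continuity-of-roots argument, not by the Rolle induction or the $\partial_{\lambda_i}:\Gamma^m\to\Gamma^{m-1}$ observation you invoke. So either cite that theorem explicitly — in which case your write-up is essentially a correct reduction of the stated inequality to it, in the same spirit as the paper's citation — or supply that argument; as it stands, the pivotal hyperbolicity claim is asserted with a justification that would fail.
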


Let $A$ be a $n\times n$ Hermitian matrix, and $\bm{\lambda} (A)$ be the eigenvalue set of $A$. 
We denote
\begin{equation}
\label{definition-2-5}
	S_m (A) := S_m (\bm{\lambda} (A)) = \sum_{\text{principal minor } A' \text{ of order } m} \det A' .
\end{equation} 
Further, let $\chi$ be a real $(1,1)$-form on $M$, and $\bm{\lambda} (\chi)$ be the eigenvalue set of $\chi$ with respect to $\omega$ under any local chart. We denote
\begin{equation}
\label{definition-2-6}
	S_m (\chi) := S_m (\bm{\lambda} (\chi)) ,
\end{equation}
 and
 \begin{equation}
 	\Gamma^m_\omega := \{\text{real } (1,1)-\text{form } \chi | \bm{\lambda} (\chi) \in \Gamma^m\} .
 \end{equation}
At a fixed point $\mathcal{Z} \in M$, we can choose a local chart around $\mathcal{Z}$ such that $\omega_{i\bar j} (\mathcal{Z}) = \delta_{ij}$, and still denote by $\chi$ its corresponding Hermitian coefficient matrix under this chart. Then we can see that \eqref{definition-2-5} and \eqref{definition-2-6} are compliant. Moreover, it is easy to see that
\begin{equation*}
	S_m (\chi) = C^m_n \frac{\chi^m \wedge \omega^{n - m}}{\omega^n} .
\end{equation*}
%where $C^m_n := \frac{n!}{m! (n - m)!}$.

\medskip
\subsection{Iteration argument}

The key tools for the estimates are iteration arguments. The following lemma~\cite{GP2022} is derived by an iteration argument in \cite{Kolodziej}. 
 \begin{lemma}
 \label{lemma-3-2}
 Let $\phi (s)$ be a monotone increasing function on $(0,s_0)$ satisfying that
 \begin{equation*}
 	\lim_{s \to 0+} \phi (s) = 0,
 \end{equation*}
 \begin{equation*}
 	\phi (s) > 0 \quad \text{for } s \in (0,s_0) ,
 \end{equation*}
 and
 \begin{equation*}
 	t \phi(s - t) \leq C_0 (\phi (s))^{1 + \delta_0} \quad \text{for all } 0 < t < s < s_0 .
 \end{equation*}
Then we have
\begin{equation}
	\phi (s_0) \geq \left( \frac{s_0 (1 - 2^{-\delta_0})}{2 C_0}\right)^{\frac{1}{\delta_0}} .
\end{equation}

 \end{lemma}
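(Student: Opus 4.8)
The plan is to carry out the standard Kołodziej-type iteration directly from the functional inequality, arranged so that the sequence one builds either stops with the claimed bound already in hand, or continues forever and forces a limit point to equal $0$, in contradiction with $\phi>0$ on $(0,s_0)$. Since $\phi$ is monotone increasing, $\phi(s_0)$ is to be read as $\lim_{s\to s_0^-}\phi(s)$ (if this limit is $+\infty$ there is nothing to prove), while $\phi$ is finite and strictly positive on $(0,s_0)$. Hence it suffices to prove, for every interior point $s_*\in(0,s_0)$, that
\[
\phi(s_*)\ \ge\ \Big(\tfrac{s_*\,(1-2^{-\delta_0})}{2C_0}\Big)^{1/\delta_0},
\]
and then let $s_*\uparrow s_0$. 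The reduction to interior $s_*$ is convenient because the hypothesis $t\phi(s-t)\le C_0(\phi(s))^{1+\delta_0}$ is then available at $s=s_*$ and at every smaller point the iteration will visit.

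The single elementary fact I would isolate is: for any $\sigma\in(0,s_0)$ and any $t$ with $2C_0(\phi(\sigma))^{\delta_0}\le t<\sigma$ one has $\phi(\sigma-t)\le\tfrac12\phi(\sigma)$. Indeed, if $\phi(\sigma-t)>\tfrac12\phi(\sigma)$, then $\tfrac12\,t\,\phi(\sigma)<t\,\phi(\sigma-t)\le C_0(\phi(\sigma))^{1+\delta_0}$, whence $t<2C_0(\phi(\sigma))^{\delta_0}$, a contradiction. Using this, set $s_0:=s_*$ and, inductively, put $t_j:=2C_0(\phi(s_j))^{\delta_0}>0$; as long as $t_j<s_j$, define $s_{j+1}:=s_j-t_j\in(0,s_j)$. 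Then $\phi(s_{j+1})\le\tfrac12\phi(s_j)$, so by induction $\phi(s_j)\le 2^{-j}\phi(s_*)$, and therefore $t_j\le 2C_0\,2^{-j\delta_0}(\phi(s_*))^{\delta_0}$.

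The argument then splits into two cases. If the construction terminates at some step $N$, i.e. $t_N=2C_0(\phi(s_N))^{\delta_0}\ge s_N$, then
\[
s_*\ =\ s_N+\sum_{j=0}^{N-1}t_j\ \le\ 2C_0(\phi(s_N))^{\delta_0}+\sum_{j=0}^{N-1}2C_0\,2^{-j\delta_0}(\phi(s_*))^{\delta_0}\ \le\ 2C_0(\phi(s_*))^{\delta_0}\sum_{j=0}^{\infty}2^{-j\delta_0}\ =\ \frac{2C_0(\phi(s_*))^{\delta_0}}{1-2^{-\delta_0}},
\]
which rearranges to the desired inequality. If the construction never terminates, then $s_j$ decreases to $s_\infty:=s_*-\sum_{j\ge0}t_j\ge s_*-2C_0(\phi(s_*))^{\delta_0}/(1-2^{-\delta_0})$; but $\phi(s_j)\le 2^{-j}\phi(s_*)\to0$, and since $\phi$ is increasing and strictly positive on $(0,s_0)$ this is possible only if $s_\infty=0$ (otherwise any $s\in(0,s_\infty)$ would satisfy $0<\phi(s)\le\phi(s_j)\to0$). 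Hence $0\ge s_*-2C_0(\phi(s_*))^{\delta_0}/(1-2^{-\delta_0})$, again the claim. Letting $s_*\uparrow s_0$ and using monotonicity of $\phi$ finishes the proof.

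The argument is essentially bookkeeping, and I do not anticipate a genuine obstacle. The only point that requires care is that one should \emph{not} try to choose $s_{j+1}$ by literally halving the value of $\phi$, because $\phi$ is only assumed monotone and may have jumps; the quantitative consequence of the functional inequality isolated in the second paragraph does the job cleanly and unconditionally. A secondary, purely notational point—how to interpret the endpoint value $\phi(s_0)$—is handled by the reduction to interior points $s_*$ together with monotonicity.
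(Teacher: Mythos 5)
Your proof is correct, and it is essentially the standard Kolodziej/Guo--Phong iteration: the paper itself gives no proof of this lemma but cites \cite{GP2022}\cite{Kolodziej}, where the argument is the same halving iteration you carry out (your handling of the endpoint via interior points $s_*$ and of possible jumps of $\phi$ is fine, and you correctly never need the hypothesis $\lim_{s\to 0+}\phi(s)=0$). One cosmetic fix: you write ``set $s_0:=s_*$'', reusing the symbol $s_0$ for the first term of your sequence, which clashes with the endpoint $s_0$ of the interval; rename the sequence, e.g.\ $\sigma_0:=s_*$, $\sigma_{j+1}:=\sigma_j-t_j$.
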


The following is De Giorgi iteration. For a proof, we refer the readers to \cite{ChenWu}\cite{GilbargTrudinger}. 
\begin{lemma}[De Giorgi iteration]
\label{lemma-4-1}
Suppose that $\phi (s)$ is a nonnegative increasing function on $[s_0,+\infty)$ satisfying
\begin{equation*}
	s'^{\alpha} \phi (s' + s) \leq C \phi^{1 + \delta} (s), \qquad \forall s' >0, s\geq s_0
\end{equation*}
where $\alpha, \delta > 0$. Then $\phi (s_0 + d) = 0$, for all $d \geq C^{\frac{1}{\alpha}} \phi^{\frac{\delta}{\alpha}} (s_0) 2^{\frac{1 + \delta}{\delta}}$ .
\end{lemma}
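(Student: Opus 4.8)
The plan is to run the classical De Giorgi iteration (as in \cite{ChenWu}\cite{GilbargTrudinger}). Fix $d$ satisfying $d \ge C^{1/\alpha}\phi(s_0)^{\delta/\alpha}\,2^{(1+\delta)/\delta}$ and introduce the increasing sequence of levels
\[
s_k := s_0 + d\,(1 - 2^{-k}), \qquad k = 0, 1, 2, \dots ,
\]
so that $s_0$ is the starting level, $s_k \uparrow s_0 + d$, $s_{k+1} - s_k = d\,2^{-(k+1)} > 0$, and every $s_k$ lies in $[s_0, s_0 + d]$. Write $a_k := \phi(s_k) \ge 0$. Applying the hypothesis with $s = s_k$ $(\ge s_0)$ and $s' = s_{k+1} - s_k$ $(>0)$, rewritten in the form $\phi(s'+s) \le C\,(s')^{-\alpha}\phi(s)^{1+\delta}$, gives the recursion
\[
a_{k+1} \;\le\; \frac{C}{(s_{k+1}-s_k)^{\alpha}}\,a_k^{\,1+\delta} \;=\; \frac{C\,2^{\alpha}}{d^{\alpha}}\,(2^{\alpha})^{k}\,a_k^{\,1+\delta}.
\]

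Next I would prove by induction on $k$ the geometric decay $a_k \le 2^{-\alpha k/\delta}\,a_0$. The base case $k=0$ is trivial. For the inductive step, substituting $a_k \le 2^{-\alpha k/\delta}a_0$ into the recursion produces an upper bound for $a_{k+1}$ that is $\le 2^{-\alpha(k+1)/\delta}a_0$ precisely when $C\,a_0^{\delta} \le d^{\alpha}\,2^{-\alpha(1+\delta)/\delta}$, i.e. when $a_0 \le C^{-1/\delta}\,d^{\alpha/\delta}\,2^{-\alpha(1+\delta)/\delta^2}$; here the super-geometric gain from the exponent $1+\delta > 1$ is exactly what absorbs the geometric factor $(2^{\alpha})^k$ coming from $(s_{k+1}-s_k)^{-\alpha}$. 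Raising the stated hypothesis $d \ge C^{1/\alpha}\phi(s_0)^{\delta/\alpha}2^{(1+\delta)/\delta}$ to the power $\alpha/\delta$ (and recalling $a_0 = \phi(s_0)$) yields exactly this threshold, so the induction closes and $a_k \to 0$ as $k \to \infty$.

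Finally I would conclude using the monotonicity of $\phi$. Since $\phi$ is increasing and $s_k$ is increasing, $a_k = \phi(s_k)$ is nondecreasing in $k$; a nondecreasing nonnegative sequence tending to $0$ vanishes identically, so in particular $a_0 = \phi(s_0) = 0$. Applying the hypothesis once more with $s = s_0$ and $s' = d$ then gives $d^{\alpha}\phi(s_0+d) \le C\,\phi(s_0)^{1+\delta} = 0$, hence $\phi(s_0+d) = 0$, which is the claim. (If one instead reads $\phi$ as nonincreasing, the conclusion is even more immediate: $0 \le \phi(s_0+d) \le \phi(s_k) = a_k \to 0$.)

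There is no genuine analytic obstacle here; the only care needed is bookkeeping in the inductive step — choosing the dyadic increments $s_{k+1}-s_k = d\,2^{-(k+1)}$ so that the resulting constant matches the stated lower bound on $d$, and verifying that the exponents $\alpha/\delta$ and $\alpha(1+\delta)/\delta^2$ line up. It is also worth noting that the iteration only ever evaluates $\phi$ at points of the finite interval $[s_0, s_0+d]$, so no hypothesis on $\phi$ outside that interval is used; this is the natural place to record the precise form $\phi(s'+s) \le C(s')^{-\alpha}\phi(s)^{1+\delta}$ of the assumption that the argument actually consumes.
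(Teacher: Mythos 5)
Your proof is correct, and it is exactly the classical De Giorgi iteration that the paper itself does not reproduce but delegates to the cited references \cite{ChenWu}\cite{GilbargTrudinger}: the dyadic levels $s_k = s_0 + d(1-2^{-k})$, the recursion $a_{k+1} \leq C\,2^{\alpha(k+1)} d^{-\alpha} a_k^{1+\delta}$, and the induction $a_k \leq 2^{-\alpha k/\delta} a_0$ close precisely under the stated threshold $d \geq C^{1/\alpha}\phi^{\delta/\alpha}(s_0)\,2^{(1+\delta)/\delta}$, as you verified. Your closing remark is also on point: the word ``increasing'' in the statement is evidently a slip (in the paper's applications $\phi(s)=\int_{M_s}e^{nf}\omega^n$ is nonincreasing in $s$), and your argument covers both readings, the nonincreasing one being the version actually used.
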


\medskip
\section{Non-degenerate Case}
 
In this section, we shall study a class of non-degenerate complex equations,
\begin{equation}
\label{equation-3-1}
	F (\chi + \delta \omega + \sqrt{-1} \partial\bar\partial \varphi) 
	:= \mathcal{F} (\bm{\lambda} (\chi + \delta \omega + \sqrt{-1} \partial\bar\partial\varphi)) 
	=  \psi, \quad \sup_M \varphi = 0,
\end{equation}
where $\chi$ is a real $(1,1)$-form and $\bm{\lambda} (\chi)$ denotes the eigenvalue set of $\chi$ with respect to $\omega$. The smooth function $\mathcal{F}$ is defined on a symmetric convex cone $\Gamma \in \mathbb{R}^n$ with $\Gamma^n \subset \Gamma \subset \Gamma^1$.

% non-degenerate case of complex Hessian equations, 
%which means that
%there exists $\delta > 0$ such that  $\chi - \delta \omega \in \bar\Gamma_\omega$ for $\chi$ in Equation~\eqref{equation-hessian}.

\subsection{$L^\infty$ estimate} 
\label{subsection-3-1}
%For generality and convenience, we consider the following equation which includes the non-degenerate case of Hessian equations~\eqref{equation-hessian}:
% \begin{equation}
% \label{equation-2}
% 	 	F(\chi + \delta\omega + \sqrt{-1} \partial\bar\partial \varphi) =  \psi , \quad \sup_M \varphi = 0 .
% \end{equation}
We impose the following structure condition for Equation~\eqref{equation-3-1} as in \cite{SuiSun2021}:
% \begin{enumerate}
 %
%   \item  $\chi $ is a  real $(1, 1)$-form with $\bm{\lambda} (\chi ) \in {\bar\Gamma}$;
%   , and its eigenvalue set  with respect to $\omega$ belongs to a symmetric cone $\bar \Gamma \subset \mathbb{R}^n$ with $\Gamma^1 \subset \Gamma \subset\Gamma^n$
   %
%
   %
%   \item 
   there is a smooth function $f$ on $M$ such that 
   \begin{equation}
   \label{structure-condition-1}
 	\hat\chi^n (z) \leq e^{n f(z)} \omega^n (z) ,
   \end{equation}
   if at point $z \in M$, real $(1,1)$-form $\hat\chi (z) \geq 0$ satisfies
    \begin{equation*}
        F\left(\chi(z) + \hat \chi(z)\right)  \leq \psi (z) .
     \end{equation*}
   %
% \end{enumerate}
As pointed out in \cite{SuiSun2021}, the structure condition is a direct extension of $\mathcal{C}$-subsolution~\cite{Szekelyhidi} (or cone condition~\cite{SongWeinkove,FangLaiMa,Sun2017}). 
%We shall find the $L^\infty$ estimate for an admissible solution $\varphi$.
 %, that is,   $\chi +\delta\omega + i\partial\bar\partial\varphi \in \bar\Gamma_\omega$.
%
% In this section, we shall consider the $L^\infty$ estimate for the following equations
% \begin{equation}
% 	F(\chi + \delta\omega + i\partial\bar\partial \varphi) =  \psi , \quad \sup_M \varphi = 0 .
% \end{equation}
%The equation satisfies the following condition:
%there is a function $f$ on $M$ such that for any $\hat\chi (z) \geq 0$ satisfying
% $$
% F(\chi (z) + \hat\chi (z)) \leq \psi (z),
% $$
% it must be 
% \begin{equation}
% \hat \chi^n (z) \leq e^{n f (z)} \omega^n (z) .
% \end{equation}
If $\chi \in \Gamma^m_\omega$ in complex Hessian equation~\eqref{equation-hessian}, i.e. $\bm{\lambda}(\chi) \in \Gamma^m$, 
then there is  a small constant $\delta  > 0$ so that $\chi - \delta \omega \in \Gamma^m_\omega$, which we call the non-degenerate case of complex Hessian equation. The non-degenerate complex Hessian equation is a particular case of Equation~\eqref{equation-3-1} if replacing $\chi$ by $\chi - \delta \omega$ in \eqref{equation-3-1}.

We can prove $L^\infty$ estimate for solution $\varphi$, even when $\chi$ and $\omega$ are not closed,
%i.e. $\bm{\lambda} (\chi + \delta \omega + \sqrt{-1} \partial\bar\partial \varphi) \in \Gamma$. 
%The closedness of $\chi$ and $\omega$ is actually not in need in this section, 
by adapting the argument in Guo-Phong~\cite{GP2022}.
\begin{theorem}
\label{theorem-3-1}
Besides structure condition~\eqref{structure-condition-1}  above, we also suppose that $\int_M e^{nf} (1 + n |f|)^p$ is bounded for $p > n $. 
%With the assumptions above,  
Then there is a constant $C$ such that $
- \varphi < C $,
for solution $\varphi$ to Equation~\eqref{equation-3-1}. 
%if $||e^{nf}||_{L^1 (\log L)^p}:= \int_M e^{nf} (1 + n |f|)^p$ is bounded. 
\end{theorem}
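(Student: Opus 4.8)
The plan is to adapt the auxiliary complex Monge--Amp\`ere method of Guo--Phong~\cite{GP2022}. Let $\varphi$ be a $C^2$ admissible solution of~\eqref{equation-3-1}, normalized by $\sup_M\varphi=0$, with $\psi=\mathcal F(\bm{\lambda}(\chi+\delta\omega+\sqrt{-1}\partial\bar\partial\varphi))$; we must bound $L:=-\inf_M\varphi$ from above. For $s\ge0$ set
\[
\Omega_s:=\{\varphi<-s\},\qquad A_s:=\int_{\Omega_s}e^{nf}\,\omega^n ,
\]
so that $A_s$ is non-increasing and $A_0=\int_M e^{nf}\omega^n<\infty$ (since $(1+n|f|)^p\ge1$). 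The strategy is to produce a closed feedback inequality for the sublevel masses $A_\bullet$, of the type handled by the iteration Lemma~\ref{lemma-3-2} (alternatively the De Giorgi Lemma~\ref{lemma-4-1}), which then bounds the range of $s$ on which $\Omega_s\neq\emptyset$; since $f$ is smooth, $A_s=0$ forces $\Omega_s$ to be Lebesgue-null, hence empty, so $-\varphi<C$.

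For each fixed $s$ with $A_s>0$, the first step is to solve the auxiliary complex Monge--Amp\`ere equation
\[
(\omega+\sqrt{-1}\partial\bar\partial u_s)^n=\frac{c_s}{A_s}\,\mathbf 1_{\Omega_s}\,e^{nf}\,\omega^n,\qquad\sup_M u_s=0,
\]
with $c_s$ fixed by matching the total masses. Two properties of $u_s$ are used, both uniform in $s$: it is $\omega$-plurisubharmonic with $\sup_M u_s=0$, so it enjoys the usual $L^1$ and exponential integrability ($\alpha$-invariant) bounds; and the Monge--Amp\`ere energy $\int_M(-u_s)(\omega+\sqrt{-1}\partial\bar\partial u_s)^n$ is bounded in terms of $(M,\omega)$ only, whence $\int_{\Omega_s}(-u_s)\,e^{nf}\,\omega^n\le C A_s/c_s$. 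Passing through a \emph{Monge--Amp\`ere} equation is the point: the determinant is monotone on nonnegative Hermitian matrices and blind both to the non-linearity of $\mathcal F$ and to the sign of $\chi$. The hypothesis $\int_M e^{nf}(1+n|f|)^p<\infty$ with $p>n$ serves precisely to bound the Orlicz norm of the right-hand side of the auxiliary equation, so that the Kolodziej/Eyssidieux--Guedj--Zeriahi estimates (in the sharp $L^1(\log L)^p$ form) and the energy bound hold uniformly; when $\chi$ and $\omega$ are not closed, the energy identity is replaced by a Cherrier-type inequality at the cost of a harmless constant.

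The heart of the argument is a maximum-principle comparison. On a larger sublevel set $\Omega_{s'}$, $0\le s'<s$, consider
\[
\Phi:=-\,\varepsilon\,\bigl(-u_s+\Lambda\bigr)^{\frac{n}{n+1}}-(\varphi+s),
\]
with $\Lambda>1+\|u_s\|_{L^\infty}$ and $\varepsilon>0$ to be optimized. On $\partial\Omega_{s'}$ one has $\varphi=-s'$, hence $\Phi\le s'-s<0$; so if $\sup_{\Omega_{s'}}\Phi>0$ it is attained at an interior point $z_0$, and since $\Phi\le0$ off $\Omega_s$ necessarily $z_0\in\Omega_s$. At $z_0$, $\sqrt{-1}\partial\bar\partial\Phi\le0$; expanding $\sqrt{-1}\partial\bar\partial(-u_s+\Lambda)^{n/(n+1)}$, using the concavity of $t\mapsto t^{n/(n+1)}$ and $\omega+\sqrt{-1}\partial\bar\partial u_s\ge0$, and choosing $\varepsilon$ small enough (for the given $\Lambda$) that the term proportional to $\omega$ is absorbed into $\delta\omega$, one gets at $z_0$
\[
\hat\chi:=\delta\omega+\sqrt{-1}\partial\bar\partial\varphi\;\ge\;c\,\varepsilon\,(-u_s+\Lambda)^{-\frac1{n+1}}\,(\omega+\sqrt{-1}\partial\bar\partial u_s)\;\ge\;0 ,
\]
while $F(\chi+\hat\chi)=\psi$ there. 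The structure condition~\eqref{structure-condition-1} then applies at $z_0$ and yields $\hat\chi^n(z_0)\le e^{nf(z_0)}\omega^n(z_0)$. This is the only place the non-linear equation enters, and---in contrast with the Hessian treatment of Guo--Phong--Tong---no sign hypothesis on $\chi$ is needed, because the positivity required to invoke~\eqref{structure-condition-1} is manufactured at $z_0$ by the maximum principle.

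Raising the matrix inequality at $z_0$ to the $n$-th power (monotonicity of the determinant), substituting the structure bound on one side and the auxiliary equation $(\omega+\sqrt{-1}\partial\bar\partial u_s)^n=\tfrac{c_s}{A_s}\,e^{nf}\,\omega^n$ at $z_0\in\Omega_s$ on the other, gives $(-u_s(z_0)+\Lambda)^{n/(n+1)}\ge(c\varepsilon)^n\,c_s/A_s$, hence a lower bound for $\Phi$ at its maximum in terms of $\varepsilon$, $c_s/A_s$, and $\varphi(z_0)$. Feeding this back into $\Phi\le\Phi(z_0)$, integrating over $\Omega_s$ against $e^{nf}\omega^n$, using the energy bound $\int_{\Omega_s}(-u_s)e^{nf}\omega^n\le CA_s/c_s$ together with a H\"older step (which creates the surplus exponent), and optimizing over $\varepsilon$, one arrives at a feedback inequality of the form $t\,A_{s+t}\le C_0\,A_s^{1+\delta_0}$ for $0<t$, $s>0$, with $C_0,\delta_0$ depending only on $n$, $\omega$, the structure data, and $\int_M e^{nf}(1+n|f|)^p$. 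Lemma~\ref{lemma-3-2} (or Lemma~\ref{lemma-4-1}) then forces $A_s$ to vanish for $s$ past an explicit constant, i.e. $-\varphi<C$. The main obstacle is precisely this comparison step: to arrange the test function and the choice of $\Lambda,\varepsilon$ so that $\delta\omega+\sqrt{-1}\partial\bar\partial\varphi$ is genuinely nonnegative at the critical point---this is the price of dropping any positivity assumption on $\chi$---while keeping every constant independent of $s$ as $A_s\to0$, which is what forces the sharp $L^1(\log L)^p$, $p>n$, integrability rather than a crude $L^q$ bound, and so that the resulting feedback inequality has exactly the homogeneity demanded by the iteration lemmas. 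The remaining ingredients---solvability, energy, and exponential-integrability bounds for the auxiliary equation, and the iteration bookkeeping---are standard once this step is in hand.
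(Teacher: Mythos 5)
Your overall strategy (auxiliary Monge--Amp\`ere function, maximum-principle comparison, structure condition invoked only at the interior maximum where positivity of $\delta\omega+\sqrt{-1}\partial\bar\partial\varphi$ is manufactured, then an iteration lemma) is the right one and matches the spirit of the paper, but the central comparison step does not close as you have set it up. In the Guo--Phong scheme used in the paper, the auxiliary equation carries the weight $\tau_k(-u_s)$ --- a smoothing of the positive part of the very function being estimated --- on its right-hand side, and $\epsilon$ is chosen as $\left(\frac{n}{n+1}\right)^{-\frac{n}{n+1}}A_{s,k}^{\frac{1}{n+1}}$ precisely so that at an interior maximum $z_0\in\Omega_s$ the chain ``structure condition $\Rightarrow(\delta\omega+\sqrt{-1}\partial\bar\partial\varphi)^n\le e^{nf}\omega^n\Rightarrow 1\ge\epsilon^n\left(\frac{n}{n+1}\right)^n(-\psi_{s,k})^{-\frac{n}{n+1}}\frac{\tau_k(-u_s)}{A_{s,k}}$'' rearranges \emph{exactly} into $\Phi(z_0)\le0$: the factor $\tau_k(-u_s)(z_0)\ge-u_s(z_0)$ coming from the auxiliary right-hand side cancels against the $-u_s$ inside $\Phi$. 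You replaced this weight by the indicator $\mathbf{1}_{\Omega_s}$ and the unweighted mass $A_s=\int_{\Omega_s}e^{nf}\omega^n$. With that choice the same computation only yields $(-u_s(z_0)+\Lambda)^{\frac{n}{n+1}}\ge(c\varepsilon)^n c_s/A_s$, hence an upper bound of the form $\Phi(z_0)\le -c^n\varepsilon^{n+1}c_s/A_s+\bigl(-\varphi(z_0)-s\bigr)$, which involves the unknown $\sup_M(-\varphi)$; there is no cancellation, and no admissible choice of $\varepsilon$ (you also need $\varepsilon\lesssim\delta\Lambda^{\frac{1}{n+1}}$ for the positivity absorption) produces $\Phi\le0$ or the claimed feedback $t\,A_{s+t}\le C_0A_s^{1+\delta_0}$ without already assuming the bound you are proving. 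Two of your uniformity claims are also unjustified: $\Lambda>1+\|u_s\|_{L^\infty}$ is not under control, because the density $\frac{c_s}{A_s}\mathbf{1}_{\Omega_s}e^{nf}$ has no uniform $L^p$ ($p>1$) bound as $A_s\to0$, so Kolodziej's estimate gives no uniform $\|u_s\|_{L^\infty}$; and the Monge--Amp\`ere energy $\int_M(-u_s)(\omega+\sqrt{-1}\partial\bar\partial u_s)^n$ of an $\omega$-psh function with $\sup_M u_s=0$ is \emph{not} bounded in terms of $(M,\omega)$ alone --- in the global version of this argument the quantity needing a uniform bound is the weighted mass of the original solution, and it is obtained by a separate entropy argument (cf.\ Lemma~\ref{lemma-4-2} in Section 4), not by an energy bound for the auxiliary function.

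Note also that the paper's actual proof is local rather than global: the auxiliary equation is a Dirichlet problem on a coordinate ball centered at the minimum point $\mathcal Z$ of $\varphi$, with $u_s=\varphi-\varphi(\mathcal Z)+\frac{\delta}{2}|z|^2-s$, the strict positivity at the maximum point coming from the quadratic term rather than from a shift $\Lambda$; Lemma~\ref{lemma-3-2} is then used not to force $A_s=0$ but to produce a positive lower bound $\int_{\Omega_{s_0}}e^{nf}\omega^n\ge c_0$ at the fixed scale $s_0=2\delta r_0^2$, and the final bound on $-\varphi(\mathcal Z)$ follows from a logarithmic/Young inequality combined with the uniform $L^1$ bound on $\varphi$ coming from $\chi+\delta\omega+\sqrt{-1}\partial\bar\partial\varphi\in\Gamma^1_\omega$ (Tosatti--Weinkove). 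This localization is exactly what allows $\chi$ and $\omega$ to be non-closed, whereas your global scheme already needs K\"ahlerness (or a genuinely nontrivial Cherrier-type substitute) to solve and estimate the auxiliary equation. If you wish to keep the global De Giorgi route, you must (i) use the weighted right-hand side $\tau_k(-\varphi-s)$ so the maximum-principle step closes, and (ii) supply the uniform bound on the weighted masses (the $e^{\beta A_s}$ factor) via the entropy argument, as is done in Section 4 for the degenerate case.
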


 \begin{proof}%[Proof of Theorem~\ref{theorem-3-1}]
Due to compactness of $(M,\omega)$, there exists $r_0 > 0$  such that for all point $\mathcal{Z} \in M$, there exists a local chart satisfying $\omega_{i\bar j} (\mathcal{Z}) = \delta_{ij}$ and 
 \begin{equation*}
 \frac{1}{2} \sqrt{-1}  \sum_{i,j} \delta_{ij} dz^i \wedge d\bar z^j \leq \omega \leq 2 \sqrt{-1} \sum_{i,j} \delta_{ij} dz^i \wedge d\bar z^j \quad \text{ in } B(\mathcal{Z},2r_0),
 \end{equation*}
 where $B(\mathcal{Z},2r_0)$ is an Euclidian ball in $\mathbb{C}^n$.
%Moreover, the coordinate of $\mathcal{Z}$ is the origin. 
Now we choose  the minimum point of $\varphi$ on $M$ as point $\mathcal{Z}$. 
 Let
 \begin{equation*}
 u_s (z) = \varphi (z) - \varphi(\mathcal{Z}) + \frac{\delta}{2} |z|^2 - s ,
 \end{equation*}
 where $0 < s < s_0 := 2 \delta r^2_0$ and $\mathcal{Z}$ is the origin in the local chart. 
 We denote
 \begin{equation*}
 	\Omega := B (\mathcal{Z},2r_0),
 \end{equation*}
 and
 \begin{equation*}   
 	\Omega_s := \{ u_s(z) < 0\} \subset \mathring{\Omega}.
 \end{equation*}
% On $\partial \Omega$, 
% $$
% u_s (z) =  \varphi (z) - \varphi(\mathcal{Z}) + 2 \delta r^2_0 - s > 0 .
% $$
% Let 
% $$
% \Omega_s = \{ u_s(z) < 0\}.
% $$

 We solve the Dirichlet problem
 \begin{equation}
 \left\{
 \begin{aligned}
 \left(\sqrt{-1} \partial \bar\partial \psi_{s,k}\right)^n &= \frac{\tau_k (- u_s)}{A_{s,k}} e^{nf} \omega^n , &&\text{ in } \Omega ,\\
 \psi_{s,k} &= 0, && \text{ on } \partial\Omega ,
 \end{aligned}
 \right.
 \end{equation}
 where
 \begin{equation*}
  A_{s,k} := \int_\Omega \tau_k (- u_s) e^{nf} \omega^n \to A_s := \int_{\Omega_s} (- u_s) e^{nf} \omega^n ,
\end{equation*}
  and
 $\tau_k : \mathbb{R} \to \mathbb{R}^+$  is a decreasing sequence of smooth positive functions such that
 \begin{equation*}
 \tau_k (x) =
 \left\{
 \begin{aligned}
  &\;x + \dfrac{3}{k}\;, &&\quad \text{when } x \geq - \dfrac{1}{k} ,\\
  &\;\frac{1}{k}\;,  &&\quad \text{when } x\leq - \frac{2}{k}
   \end{aligned}
 \right.
 \end{equation*}
 and otherwise
 \begin{equation*}
 \frac{1}{k} \leq \tau_k (x) \leq \frac{2}{k} %, \quad \text{when } x \in \left[-\dfrac{2}{k}, - \dfrac{1}{k}\right] 
 .
 \end{equation*}
% So
% $$
% \int_\Omega (i\partial\bar\partial \psi_{s,k})^n = 1
% $$
% and 
% $$
% A_{s,k} \to A_s := \int_{\Omega_s} (- u_s) e^{nf} \omega^n . 
% $$
% 
%
%
Define  
\begin{equation*}
\Phi := - \epsilon (-\psi_{s,k})^{\frac{n}{n+1}} - u_s , 
\end{equation*}
where %$\epsilon$ is to be specified.
% Now we choose $\epsilon$ by
 \begin{equation*}
  \epsilon := \left(\frac{n}{n + 1}\right)^{-\frac{n}{n + 1}}  A^{\frac{1}{n+1}}_{s,k} .
 \end{equation*}
%Since $u_s \geq 0 $ on $\partial \Omega$, 
The function $\Phi$ is continuous, and hence must reach $\sup_{\bar\Omega} \Phi$ at some point $z_{max} \in \bar\Omega$. 
 If $z_{max} \in \bar\Omega \setminus \Omega_s$,
 \begin{equation*}
 \Phi (z_{max}) \leq - \epsilon (-\psi_{s,k} (z_{max}))^{\frac{n}{n + 1}} < 0.
 \end{equation*}
 If $z_{max} \in \Omega_s \subset \mathring{\Omega}$,
 \begin{equation*}
 \begin{aligned}
 	0
% 	&\geq \left[\partial_i \bar\partial_j \Phi\right] \\
% 	&= \left[  \epsilon \frac{n}{n + 1} (-\psi_{s,k})^{-\frac{1}{n + 1}} \partial_i\bar\partial_j \psi_{s,k} + \epsilon \frac{n}{(n + 1)^2} (-\psi_{s,k})^{-\frac{n + 2}{n + 1}} \partial_i\psi_{s,k} \bar\partial_j \psi_{s,k} - \partial_i\bar\partial_j u_s\right] \\
 	&\geq \left[  \epsilon \frac{n}{n + 1} (-\psi_{s,k})^{-\frac{1}{n + 1}} \partial_i\bar\partial_j \psi_{s,k}  - \partial_i\bar\partial_j u_s\right] \\
% 	&= \left[  \epsilon \frac{n}{n + 1} (-\psi_{s,k})^{-\frac{1}{n + 1}} \partial_i\bar\partial_j \psi_{s,k}  - \partial_i\bar\partial_j \varphi - \frac{\delta}{2} \delta_{i\bar j}\right] \\	
 	&\geq \left[  \epsilon \frac{n}{n + 1} (-\psi_{s,k})^{-\frac{1}{n + 1}} \partial_i\bar\partial_j \psi_{s,k}  - \partial_i\bar\partial_j \varphi - \delta \omega_{i\bar j} \right] ,
 \end{aligned}
 \end{equation*}
 and consequently
 \begin{equation*}
 \left[\delta\omega_{i\bar j} + \partial_i\bar\partial_j \varphi\right] 
  \geq \epsilon \frac{n}{n + 1} (-\psi_{s,k})^{-\frac{1}{n + 1}} [\partial_i\bar\partial_j \psi_{s,k}] 
  \geq 0 .
 \end{equation*}
 Therefore,
 \begin{equation}
   1
   \geq
    \epsilon^n \left(\frac{n}{n + 1}\right)^n (-\psi_{s,k})^{-\frac{n}{n + 1}} \frac{\tau_k (-u_s)}{A_{s,k}}  
    \geq \
    \epsilon^n \left(\frac{n}{n + 1}\right)^n (-\psi_{s,k})^{-\frac{n}{n + 1}} \frac{ -u_s}{A_{s,k}} .
 \end{equation}
% \begin{equation}
% \begin{aligned}
% \left[\delta\omega + \partial_i\bar\partial_j \varphi\right] 
% \geq \epsilon \frac{n}{n + 1} (-\psi_{s,k})^{-\frac{1}{n + 1}} [\partial_i\bar\partial_j \psi_{s,k}] \\
% \\
% \\
% \det [\delta\omega + \partial_i\bar\partial_j \varphi] \geq \epsilon^n \left(\frac{n}{n + 1}\right)^n (-\psi_{s,k})^{-\frac{n}{n + 1}} \det [\partial_i\bar\partial_j \psi_{s,k}] \\
% \\
% \\
%  e^{nf} \omega^n \geq \epsilon^n \left(\frac{n}{n + 1}\right)^n (-\psi_{s,k})^{-\frac{n}{n + 1}} \frac{\tau_k (-u_s)}{A_{s,k}} e^{nf} \omega^n \\
%  \\
%  \\
%   1 \geq \epsilon^n \left(\frac{n}{n + 1}\right)^n (-\psi_{s,k})^{-\frac{n}{n + 1}} \frac{\tau_k (-u_s)}{A_{s,k}}  \\
%   \\
%   \\
%  \epsilon^{-n} \left(\frac{n}{n + 1}\right)^{-n} (-\psi_{s,k})^{\frac{n}{n + 1}}  A_{s,k} \geq  -u_s   \\
% \end{aligned}
% \end{equation}
% Now we choose $\epsilon$ by
% \begin{equation}
%  \epsilon = \left(\frac{n}{n + 1}\right)^{-\frac{n}{n + 1}}  A^{\frac{1}{n+1}}_{s,k} ,
% \end{equation}
% $$
% \epsilon^{n + 1} = \left(\frac{n}{n + 1}\right)^{-n}  A_{s,k} ,
% $$
and we can conclude that  $ \Phi \leq 0 $ on $\bar \Omega$.
%In sum, 
%\begin{equation*}
% - u_s \leq \epsilon (-\psi_{s,k})^{\frac{n}{n + 1}} = \left(\frac{n + 1}{n} \right)^{\frac{n}{n + 1}} A^{\frac{1}{n + 1}}_{s,k} (-\psi_{s,k})^{\frac{n}{n + 1}}  \quad \text{ on } \bar\Omega.
% \end{equation*}
% 
% 

According to the works of \cite{Kolodziej}\cite{WangWnagZhou2020}, there are constants $\beta > 0$ and $C > 0$ depending on geometric data such that
% 
% \newpage
% 
% $$
% (- u_s)^{\frac{n + 1}{n}} \leq   \frac{n + 1}{n}  A^{\frac{1}{n }}_{s,k} (-\psi_{s,k})
% $$
% 
% 
% 
% $$
% \frac{(- u_s)^{\frac{n + 1}{n}} }{A^{\frac{1}{n }}_{s,k}} \leq   \frac{n + 1}{n}   (-\psi_{s,k})
% $$
% 
% 
% 
 \begin{equation}
 \label{inequality-3-9}
 \int_{\Omega_s} \exp \left( \beta \frac{(- u_s)^{\frac{n + 1}{n}} }{A^{\frac{1}{n }}_{s,k}}  \right) \omega^n
 % \leq \int_{\Omega_s} \exp \left( \beta \frac{n + 1}{n}   (-\psi_{s,k}) \right) \omega^n 
  \leq \int_{\Omega} \exp \left( \beta \frac{n + 1}{n}   (-\psi_{s,k}) \right) \omega^n 
  \leq C .
 \end{equation}
 Letting $k \to \infty$, we have
 \begin{equation}
 \label{inequality-3-10}
 \int_{\Omega_s} \exp \left( \beta \frac{(- u_s)^{\frac{n + 1}{n}} }{A^{\frac{1}{n }}_{s}}  \right) \omega^n
  \leq C .
 \end{equation}
 %where $C$ depends on $n$ and $diam (\Omega)$.
% 
% $$
% \begin{aligned}
% 	v(z)^p e^{nf} 
% 	&\leq e^{nf} \left(\ln \left(1 + e^{nf}\right)\right)^p + v(z)^p \left(e^{v(z)} - 1\right) \\
% 	&\leq e^{nf} \left(1 + |nf|\right)^p + C_p e^{2 v(z)} .
% \end{aligned}
% $$
% Define
% $$
% v(z) := \frac{\beta}{2} \frac{(- u_s)^{\frac{n + 1}{n}}}{A^{\frac{1}{n}}_s} .
% $$
 By generalized Young's inequality,
\begin{equation}
\label{inequality-3-11}
  \begin{aligned}
  	&\quad \int_{\Omega_s} \left(\frac{\beta}{2}\right)^p \frac{ (- u_s)^{\frac{(n + 1) p}{n}} }{ A^{\frac{p}{n}}_s} e^{nf} \omega^n \\
  	&\leq \int_{\Omega_s} e^{nf} (1 + |nf|)^p \omega^n + C_p \int_{\Omega_s} \exp \left(\beta \frac{(- u_s)^{\frac{n + 1}{n}}}{A^{\frac{1}{n}}_s}\right) \omega^n 
%  	\\
% 	&\leq \int_{\Omega_s} e^{nf} (1 + |nf|)^p \omega^n + C C_p \\
%  	&\leq \int_{\Omega} e^{nf} (1 + |nf|)^p \omega^n + C C_p ,
%  	&\leq C
.
\end{aligned}
\end{equation}
Substituting \eqref{inequality-3-10} into \eqref{inequality-3-11}, 
\begin{equation}
\label{inequality-3-12}
 \int_{\Omega_s} (- u_s)^{\frac{(n + 1) p}{n}} e^{nf} \omega^n 
 	\leq C \left(\frac{2}{\beta}\right)^p A^{\frac{p}{n}}_s  
  	\leq C A^{\frac{p}{n}}_s .
\end{equation}
Applying inequality~\eqref{inequality-3-12} and H\"older inequality with respect to $e^{nf} \omega^n$  to $A_s$,
\begin{equation}
\label{2-13}
\begin{aligned}
  	A_s 
  	&= \int_{\Omega_s} (- u_s) e^{nf} \omega^n \\
  	&\leq \left(\int_{\Omega_s} (- u_s)^{\frac{(n + 1) p}{n}} e^{nf} \omega^n\right)^{\frac{n}{(n + 1)p}} \left(\int_{\Omega_s} e^{nf} \omega^n\right)^{1 - \frac{n}{(n + 1)p}} \\
  	&\leq C A^{\frac{1}{n + 1}}_s \left(\int_{\Omega_s} e^{nf} \omega^n\right)^{1 - \frac{n}{(n + 1)p}}  .
  \end{aligned}
\end{equation}
% $$
% \begin{aligned}
% 	\left(\frac{\beta}{2}\right)^p \frac{ (- u_s)^{\frac{(n + 1) p}{n}} }{ A^{\frac{p}{n}}_s} e^{nf}
% 	&\leq
% 	e^{nf} (1 + |nf|)^p + C_p \exp\left(\beta \frac{(- u_s)^{\frac{n + 1}{n}}}{A^{\frac{1}{n}}_s}\right)
% 	\\
% 	\\
% 	\\
% 	&\quad \int_{\Omega_s} \left(\frac{\beta}{2}\right)^p \frac{ (- u_s)^{\frac{(n + 1) p}{n}} }{ A^{\frac{p}{n}}_s} e^{nf} \omega^n \\
% 	&\leq \int_{\Omega_s} e^{nf} (1 + |nf|)^p \omega^n + C_p \int_{\Omega_s} \exp \left(\beta \frac{(- u_s)^{\frac{n + 1}{n}}}{A^{\frac{1}{n}}_s}\right) \omega^n \\
% 	&\leq \int_{\Omega_s} e^{nf} (1 + |nf|)^p \omega^n + C C_p \\
% 	&\leq \int_{\Omega} e^{nf} (1 + |nf|)^p \omega^n + C C_p \\
% 	&\leq C
% 	\\
% 	\\
% 	\\
% 	&\quad \int_{\Omega_s} (- u_s)^{\frac{(n + 1) p}{n}} e^{nf} \omega^n \\
% 	&\leq \left(\frac{2}{\beta}\right)^p A^{\frac{p}{n}}_s C \\
% 	&\leq C A^{\frac{p}{n}}_s 
% 	\\
% 	\\
% 	\\
% 	A_s 
% 	&= \int_{\Omega_s} (- u_s) e^{nf} \omega^n \\
% 	&\leq \left(\int_{\Omega_s} (- u_s)^{\frac{(n + 1) p}{n}} e^{nf} \omega^n\right)^{\frac{n}{(n + 1)p}} \left(\int_{\Omega_s} e^{nf} \omega^n\right)^{1 - \frac{n}{(n + 1)p}} \\
% 	&\leq C A^{\frac{1}{n + 1}}_s \left(\int_{\Omega_s} e^{nf} \omega^n\right)^{1 - \frac{n}{(n + 1)p}} 
% \end{aligned}
% $$
Rewriting \eqref{2-13}, we have 
 \begin{equation}
 	A_s 
 \leq C \left(\int_{\Omega_s} e^{nf} \omega^n\right)^{1 + \frac{1}{n} - \frac{1}{p}} .
 \end{equation}
% $$
% \begin{aligned}
% 	A^{\frac{n}{n + 1}}_s
% 	&\leq C \left(\int_{\Omega_s} e^{nf} \omega^n\right)^{1 - \frac{n}{(n + 1)p}}
% 	\\
% 	\\
% 	\\
% 	A_s 
% 	&\leq C \left(\int_{\Omega_s} e^{nf} \omega^n\right)^{1 + \frac{1}{n} - \frac{1}{p}}
% \end{aligned}
% $$
% So we need $p > n$.
% 
% 
 Therefore, for $0 < t < s < s_0$,
\begin{equation}
C \left(\int_{\Omega_s} e^{nf} \omega^n\right)^{1 + \frac{1}{n} - \frac{1}{p}} \geq	A_s 
%  	  	&= \int_{\Omega_s} (- u_s) e^{nf} \omega^n \\
  	  	\geq \int_{\Omega_{s - t}} (- u_s) e^{nf} \omega^n 
  	\geq t \int_{\Omega_{s - t}} e^{nf} \omega^n .
\end{equation}
% $$
% \begin{aligned}
% 	A_s
% 	&= \int_{\Omega_s} (- u_s) e^{nf} \omega^n \\
% 	&\geq \int_{\Omega_{s - t}} (- u_s) e^{nf} \omega^n \\
% 	&\geq t \int_{\Omega_{s - t}} e^{nf} \omega^n 
% 	\\
% 	\\
% 	\\
% 	&\quad C \left(\int_{\Omega_s} e^{nf} \omega^n\right)^{1 + \frac{1}{n} - \frac{1}{p}} \\
% 	&\geq t \int_{\Omega_{s - t}} e^{nf} \omega^n 
% \end{aligned}
% $$
% 
%If we define $\phi (s) := \int_{\Omega_s} e^{nf} \omega^n$, 
%then
%\begin{equation}
%\label{inequality-3-16}
% C \phi^{1 + \frac{1}{n} - \frac{1}{p}} (s) \geq t \phi (s - t) .
%\end{equation}
%
%
%By an iteration argument, 
By Lemma~\ref{lemma-3-2}, we can prove that there is a constant $c_0 > 0$ 
%depending on the coefficients $s_0$, $C$ and $\frac{1}{n} - \frac{1}{p}$ in \eqref{inequality-3-16} 
so that 
%\begin{equation}
%\label{inequality-3-17}
%\phi(s_0) \geq c_0 .
%\end{equation}
\begin{equation}
\label{inequality-3-17}
\int_{\Omega_{s_0}} e^{nf} \omega^n
\geq c_0.
\end{equation}

Without loss of generality, we may assume that $\varphi (\mathcal{Z}) < -  s_0 - 1$. 
Otherwise, the proof is finished. 
%Consequently, on $\Omega_{s_0}$,
%% $$
%% u_{s_0} (z) = \varphi (z) - \varphi (\mathcal{Z}) + \frac{\delta}{2} |z|^2 - s_0 < 0,
%% $$
%% so
% \begin{equation*}
% \varphi (z) + \frac{\delta}{2} |z|^2 < \varphi (\mathcal{Z}) + s_0 < - 1 .
%\end{equation*}
%if $\varphi (\mathcal{Z}) < - 2 s_0$. 
% if  $s_0 < \frac{1}{2}$ and $\varphi (\mathcal{Z}) < - 2$. The proof is finished when the assumption fails.
% Then 
% $$
% \ln \frac{-\varphi - \dfrac{\delta}{2} |z|^2}{ \sqrt{- \varphi (\mathcal{Z}) - s_0 }} > \ln \sqrt{-\varphi (\mathcal{Z}) - s_0} > 0 .
% $$
 %$$
 %\begin{aligned}
 %\int_{\Omega_{s_0}} \ln \frac{-\varphi - \dfrac{\delta}{2} |z|^2}{ \sqrt{- \varphi (\mathcal{Z}) - s_0 }} e^nf \omega^n 
 %&> \int_{\Omega_{s_0}} \ln \sqrt{-\varphi (\mathcal{Z}) - s_0}  e^{nf} \omega^n \\
 %&= \phi (s_0) \ln \sqrt{-\varphi (\mathcal{Z}) - s_0}  
 %\end{aligned}
 %$$ 
Then, we calculate
 \begin{equation}
 \label{inequality-3-18}
  \begin{aligned}
c_0    \ln \sqrt{-\varphi (\mathcal{Z}) - s_0} 
  \leq&\, \phi (s_0) \ln \sqrt{-\varphi (\mathcal{Z}) - s_0}  \\
%  =& \int_{\Omega_{s_0}} \ln \sqrt{-\varphi (\mathcal{Z}) - s_0}  e^{nf} \omega^n \\
  <& \int_{\Omega_{s_0}} \ln \frac{-\varphi - \dfrac{\delta}{2} |z|^2}{ \sqrt{- \varphi (\mathcal{Z}) - s_0 }} e^{nf} \omega^n \\
%  \leq& \int_{\Omega_{s_0}} \left(e^{nf} (\ln (1 + e^{nf}))^p + \ln \frac{-\varphi - \dfrac{\delta}{2} |z|^2}{ \sqrt{- \varphi (\mathcal{Z}) - s_0 }} \left(\exp \left(\left(\ln\frac{-\varphi - \dfrac{\delta}{2} |z|^2}{ \sqrt{- \varphi (\mathcal{Z}) - s_0 }}\right)^{\frac{1}{p}} \right)-  1\right) \right) \omega^n \\
%  \leq & \int_{\Omega_{s_0}} \left(e^{nf} (1 + |nf|)^p + \ln \frac{-\varphi - \dfrac{\delta}{2} |z|^2}{ \sqrt{- \varphi (\mathcal{Z}) - s_0 }}\exp \left(\frac{1}{p}\ln\frac{-\varphi - \dfrac{\delta}{2} |z|^2}{ \sqrt{- \varphi (\mathcal{Z}) - s_0 }} + C(p) \right)\right) \omega^n \\
%  \leq & \int_{\Omega_{s_0}} \left(e^{nf} (1 + |nf|)^p + C \ln \frac{-\varphi - \dfrac{\delta}{2} |z|^2}{ \sqrt{- \varphi (\mathcal{Z}) - s_0 }} \left(\frac{-\varphi - \dfrac{\delta}{2} |z|^2}{ \sqrt{- \varphi (\mathcal{Z}) - s_0 }}  \right)^{\frac{1}{p}}\right) \omega^n \\
  \leq & \int_{\Omega_{s_0}} \left(e^{nf} (1 + |nf|)^p + C + C \left(\frac{-\varphi - \dfrac{\delta}{2} |z|^2}{ \sqrt{- \varphi (\mathcal{Z}) - s_0 }} \right) \right) \omega^n \\
%  \leq & \int_{\Omega} \left(e^{nf} (1 + |nf|)^p + C + C \left(\frac{-\varphi - \dfrac{\delta}{2} |z|^2}{ \sqrt{- \varphi (\mathcal{Z}) - s_0 }} \right)\right) \omega^n \\
%  \leq & \Vert e^{nf}\Vert_{L^1(log L)^p} + C + C \int_\Omega \left(\frac{-\varphi - \dfrac{\delta}{2} |z|^2}{ \sqrt{- \varphi (\mathcal{Z}) - s_0 }} \right)   \omega^n \\
  \leq &\, \Vert e^{nf}\Vert_{L^1(log L)^p} + C + C  \frac{\Vert \varphi\Vert_{L^1}}{ \sqrt{- \varphi (\mathcal{Z}) - s_0 }}   .
  \end{aligned}
 \end{equation}
 The third line in \eqref{inequality-3-18} is from generalized Young's inequality
 \begin{equation*}
 \begin{aligned}
 	u^p e^{nf} 
 %	&\leq   e^{nf} (\ln(1 + e^{{|n f|}} ))^p + e^{u} u^p \\
 	&\leq  e^{nf} (1 + |nf|)^p + e^{u} u^p 
 %	\\
 %	&\leq \exp({\mathcal{F}_t(z)}) (1 + |\mathcal{F}_t(z)|^p) (1 + 1)^{p - 1}+ C(p) e^{2 v(z)} \\
 %	&\leq 2^{p - 1}e^{\mathcal{F}_t(z)}  |\mathcal{F}_t(z)|^p + 2^{p - 1}  e^{\mathcal{F}_t(z)} + C(p) e^{2 v(z)}  
 .
 \end{aligned} 
 \end{equation*}
 Since $ \chi + \delta \omega + \sqrt{-1} \partial\bar\partial \varphi \in \Gamma^1$, $\Vert\varphi\Vert_{L^1}$ is uniformly bounded~\cite{TosattiWeinkove2010}\cite{TosattiWeinkove2013}.
 Therefore, $-\varphi$ is bounded from above.

 \end{proof}

%\newpage

\medskip

\subsection{Stability estimate}

We plan to study the stability estimate of the following equation,
\begin{equation}
\label{equation-3-19}
	F(\chi + \delta\omega + \sqrt{-1}  \partial\bar\partial \varphi) =  e^f , \quad \sup_M \varphi = 0.
\end{equation}
%Obviously, the equation includes Hessian equations.
The stability estimate is a particular case of $L^\infty$ estimate. The crucial step is to read out appropriate structure condition~\eqref{structure-condition-1}. 
%
%
%First, we need a stability estimate.
\begin{theorem}
\label{theorem-3-3}
The function $F$  is concave, and satisfies that  there is a constant $c_1 > 0$ so that 
wherever $\hat\chi \geq 0$, we must have
\begin{equation}
\label{condition-3-20}
F (\chi + \hat\chi) \geq F (\hat\chi) \geq c_1 \left(\frac{\hat\chi^n}{\omega^n}\right)^{\frac{1}{n}} .
\end{equation}
Assume that we have
\begin{equation*}
F(\chi + \delta\omega + \sqrt{-1} \partial\bar\partial \varphi_1) \leq e^{f_1} ,  \quad\sup_M \varphi_1 = 0
\end{equation*}
and 
\begin{equation*}
F(\chi + \delta\omega + \sqrt{-1} \partial\bar\partial \varphi_2) \geq 0 ,  \qquad\sup_M \varphi_2 \leq 0.
\end{equation*}
Suppose that for $q > 1$, there is a constant $K > 0$ %and $L > 0$ 
so that
\begin{equation*}
\Vert e^{nf_1} \Vert_{L^q} \leq K
% \quad \text{ and } \quad \Vert e^{nf_2} \Vert_{L^q} \leq K 
%\quad \text{ and } \quad \Vert \varphi_1 \Vert_{L^q} \leq L
.
\end{equation*}
%Then for $p > n$, there exists a constant $C > 0$ such that 
%\begin{equation}
%	 \sup_M (\varphi_2 -\varphi_1 ) \leq C \left\Vert (-\varphi_1 + \varphi_2)^+ \right\Vert_{L^{q^*}}^{\frac{1}{n + 1 + \frac{np}{p - n} + \epsilon}} .
%\end{equation}
Then for any $\epsilon > 0$ and $q' > 0$, there exists a constant $C > 0$  such that
\begin{equation}
\label{inequality-3-21}
	 \sup_M (\varphi_2 -\varphi_1 ) \leq C \left\Vert (\varphi_2 -\varphi_1 )^+ \right\Vert_{L^{q'}}^{\frac{q'}{n q^* + q' + \epsilon}} ,
\end{equation}
where $q^* = \frac{q}{q - 1}$.

\end{theorem}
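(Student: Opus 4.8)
The plan is to recognize the stability estimate as an instance of the $L^\infty$ estimate in Theorem~\ref{theorem-3-1}, applied not to $\varphi$ itself but to an auxiliary potential that measures the gap $\varphi_2 - \varphi_1$. Concretely, I would fix a small parameter $\theta \in (0,1)$ and set, in the spirit of Guo--Phong and Wang--Wang--Zhou, a perturbed equation whose background form is $\chi + \delta\omega$ and whose right-hand side encodes the sublevel structure of $\varphi_2 - \varphi_1$. The first step is to verify that a structure condition of the form \eqref{structure-condition-1} holds with an exponent $nf$ depending on $f_1$, $\varepsilon$ and the $L^q$ norm $K$; this is exactly what condition~\eqref{condition-3-20} is designed to supply, since concavity of $F$ together with $F(\chi+\hat\chi)\ge F(\hat\chi)\ge c_1(\hat\chi^n/\omega^n)^{1/n}$ gives the pointwise comparison needed to bound $\hat\chi^n$ by an explicit multiple of $\omega^n$ whenever $F(\chi+\hat\chi)\le\psi$.

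Next I would run the Guo--Phong iteration machinery of Section~\ref{subsection-3-1} verbatim, but with $e^{nf}$ replaced by $e^{nf_1}\cdot\mathbf{1}_{\{\varphi_2-\varphi_1>s\}}$-type weights: solve the auxiliary complex Monge--Amp\`ere Dirichlet problems with right-hand side $\tau_k(-u_s)A_{s,k}^{-1}e^{nf_1}\omega^n$ on small balls, construct the comparison function $\Phi = -\epsilon(-\psi_{s,k})^{n/(n+1)} - u_s$, and derive the exponential integrability \eqref{inequality-3-10} via Ko\l odziej's estimate. Since $e^{nf_1}\in L^q$ with $q>1$, one has $e^{nf_1}\in L^1(\log L)^p$ for every $p$, so the hypotheses of Lemma~\ref{lemma-3-2} and of \eqref{inequality-3-11}--\eqref{2-13} are met. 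The H\"older exponents must now be tracked carefully: running the argument with the $L^q$ bound in place of the $L^1(\log L)^p$ bound produces, after the Ko\l odziej-type iteration, a bound of the shape $A_s \le C\big(\int_{\Omega_s}e^{nf_1}\omega^n\big)^{1+1/(nq^*)-\epsilon'}$, where $q^* = q/(q-1)$ is the conjugate exponent; this is the source of the exponent $nq^*$ appearing in \eqref{inequality-3-21}. Combining this with $\int_{\Omega_s}e^{nf_1}\omega^n \le K^{1/q}\,|\{\varphi_2-\varphi_1>s\}|^{1/q^*}$ and the elementary estimate $|\{\varphi_2-\varphi_1>s\}| \le s^{-q'}\|(\varphi_2-\varphi_1)^+\|_{L^{q'}}^{q'}$ yields, through Lemma~\ref{lemma-3-2}, the control of $\sup_M(\varphi_2-\varphi_1)$ by the stated power of the $L^{q'}$ norm.

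The one genuinely new point relative to Theorem~\ref{theorem-3-1}, and the step I expect to be the main obstacle, is keeping both backgrounds aligned simultaneously: $\varphi_1$ satisfies an inequality ($\le e^{f_1}$) while $\varphi_2$ satisfies only $F(\chi+\delta\omega+\sqrt{-1}\partial\bar\partial\varphi_2)\ge 0$, so the natural ``subsolution'' for the comparison is built from $\varphi_2$ and the natural ``supersolution'' from $\varphi_1$, and the pointwise differential inequality at an interior maximum of $\Phi$ must be set up so that the concavity of $F$ and condition~\eqref{condition-3-20} convert $F(\chi+\delta\omega+\sqrt{-1}\partial\bar\partial\varphi_2)\ge 0$ together with $F(\chi+\delta\omega+\sqrt{-1}\partial\bar\partial\varphi_1)\le e^{f_1}$ into the Monge--Amp\`ere comparison $(\sqrt{-1}\partial\bar\partial\psi_{s,k})^n \le \frac{\tau_k(-u_s)}{A_{s,k}}e^{nf_1}\omega^n$ on $\Omega_s$. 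Verifying that this substitution is legitimate — i.e.\ that the difference $\varphi_2-\varphi_1$ plays the role of $\varphi-\varphi(\mathcal Z)$ in the non-degenerate estimate and that $c_1$ and $\delta$ only enter through harmless constants — is where the care is needed; once it is in place, the rest is the same iteration as in the proof of Theorem~\ref{theorem-3-1}, with the exponent bookkeeping adjusted for the $L^q$ (rather than $L^1(\log L)^p$) integrability and for the free exponent $q'$ on the left-hand side.
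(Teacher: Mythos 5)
Your overall architecture matches the paper's: you correctly treat the stability estimate as an $L^\infty$ estimate for a shifted potential, with a free small parameter (your $\theta$, the paper's $r$), and you correctly identify that concavity of $F$ together with \eqref{condition-3-20} is what converts the pair of inequalities $F(\chi+\delta\omega+\sqrt{-1}\partial\bar\partial\varphi_1)\le e^{f_1}$ and $F(\chi+\delta\omega+\sqrt{-1}\partial\bar\partial\varphi_2)\ge 0$ into a Monge--Amp\`ere density bound. However, you explicitly flag the central step as ``the main obstacle'' and ``where the care is needed'' without actually resolving it, and that step is precisely the content of the proof. The resolution in the paper is the elementary algebraic identity
\begin{equation*}
\chi + \delta\omega + \sqrt{-1}\partial\bar\partial\varphi_1
= (1-r)\bigl(\chi + \delta\omega + \sqrt{-1}\partial\bar\partial\varphi_2\bigr)
+ r\Bigl(\chi + \delta\omega + \tfrac{1}{r}\sqrt{-1}\partial\bar\partial\varphi_1 - \tfrac{1-r}{r}\sqrt{-1}\partial\bar\partial\varphi_2\Bigr),
\end{equation*}
which, via concavity, gives $e^{f_1}\ge r\,F\bigl(\delta\omega + \tfrac{1}{r}\sqrt{-1}\partial\bar\partial\varphi_1 - \tfrac{1-r}{r}\sqrt{-1}\partial\bar\partial\varphi_2\bigr)$ on the set where that argument is nonnegative; combined with \eqref{condition-3-20} this yields the Monge--Amp\`ere comparison $\bigl(\delta\omega + \tfrac{1}{r}\sqrt{-1}\partial\bar\partial\varphi_1 - \tfrac{1-r}{r}\sqrt{-1}\partial\bar\partial\varphi_2\bigr)^n\le \frac{e^{nf_1}}{c_1^n r^n}\omega^n$. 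It is this identity that forces the choice $u_s = \tfrac{1}{r}\varphi_1 - \tfrac{1-r}{r}\varphi_2 - \tfrac{1}{r}\varphi_1(\mathcal Z) + \tfrac{1-r}{r}\varphi_2(\mathcal Z) + \tfrac{\delta}{2}|z|^2 - s$ and the density $\frac{e^{nf_1}}{c_1^n r^n}\omega^n$ in the auxiliary Dirichlet problem. Your description of the right-hand side as ``$e^{nf_1}\cdot\mathbf{1}_{\{\varphi_2-\varphi_1>s\}}$-type weights'' does not lead there; the truncation enters through $\tau_k(-u_s)$ as in Theorem~\ref{theorem-3-1}, not through characteristic functions.

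Two smaller points on the exponent bookkeeping. First, the iteration exponent coming out of the Guo--Phong machinery is $1+\delta_0$ with $\delta_0<\tfrac{1}{n}$ arbitrary, not $1+\tfrac{1}{nq^*}$; the conjugate exponent $q^*$ enters later, through the H\"older inequality in the final step that separates $e^{nf_1}$ from $(\varphi_2-\varphi_1)^+$, and the lower bound $\int_{\Omega_{s_0}}e^{nf_1}\omega^n\ge c_0\,r^{1/\delta_0}\|e^{nf_1}\|_{L^q}^{1-1/(n\delta_0)}$. Second, the exponent $\tfrac{q'}{nq^*+q'+\epsilon}$ in \eqref{inequality-3-21} is then produced by optimizing $r:=\|(\varphi_2-\varphi_1)^+\|_{L^{q'}}^{\frac{\delta_0 q'}{q^*+\delta_0 q'}}$ (with a separate easy case when $\|(\varphi_2-\varphi_1)^+\|_{L^{q'}}$ is bounded below), and the $\epsilon$ in the exponent comes from the slack $\delta_0<\tfrac{1}{n}$, not from an independent H\"older exponent adjustment. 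Your Chebyshev estimate $|\{\varphi_2-\varphi_1>s\}|\le s^{-q'}\|(\varphi_2-\varphi_1)^+\|_{L^{q'}}^{q'}$ is not the mechanism the paper uses.
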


\begin{proof}

If $\Vert (\varphi_2 - \varphi_1)^+\Vert_{L^{q'}} = 0$, it must be that
\begin{equation*}
	\varphi_2 \leq \varphi_1 \quad \text{ on } M ,
\end{equation*}  
and thus Inequality~\eqref{inequality-3-21} is definitely true  for arbitrarily constant $C$. 
Without loss of generality, we may assume that 
\begin{equation*}
\Vert (\varphi_2 - \varphi_1)^+\Vert_{L^{q'}} > 0 .
\end{equation*}

At any point $z \in M$ with $\hat\chi (z) \geq 0$ and 
\begin{equation*}
	F (\chi (z) + \hat\chi (z)) \leq e^{f_1},
\end{equation*}
we have from \eqref{condition-3-20}
\begin{equation*}
	\hat \chi^n (z) \leq \frac{ e^{n f_1} }{c^n_1} \omega^n .
\end{equation*}
By Theorem~\ref{theorem-3-1}, $\varphi_1$ is uniformly bounded.

%\newpage

By concavity of $F$, for $ 0< r < \frac{1}{2}$, 
\begin{equation}
\label{inequality-3-22}
\begin{aligned}
	e^{f_1}
%	&= F \left( \chi + \delta \omega + i\partial\bar\partial \varphi_1\right) \\
%	&= F \left( \chi + \delta \omega + (1 - r) i\partial\bar\partial \varphi_2 - (1 - r) i\partial\bar\partial \varphi_2 + i \partial\bar\partial \varphi_1\right) \\
%	&= F \left((1 - r) \left(\chi + \delta\omega + i\partial\bar\partial\varphi_2\right) + r \left( \chi + \delta \omega + \frac{1}{r} i\partial\bar\partial\varphi_1 - \frac{1 - r}{r} i\partial\bar\partial\varphi_2\right)\right) \\
	&\geq	(1 - r) F (\chi + \delta \omega + \sqrt{-1} \partial\bar\partial\varphi_2) \\
	&\qquad + r F \left( \chi + \delta \omega +  \frac{1}{r} \sqrt{-1} \partial\bar\partial\varphi_1 - \frac{1 - r}{r} \sqrt{-1} \partial\bar\partial\varphi_2 \right) \\
%	&= 	(1 - r) e^{f_2} + r F \left( \chi + \delta \omega +  \frac{1}{r} i\partial\bar\partial\varphi_1 - \frac{1 - r}{r} i\partial\bar\partial\varphi_2 \right) \\
	&\geq  r F \left( \delta \omega +  \frac{1}{r} \sqrt{-1} \partial\bar\partial\varphi_1 - \frac{1 - r}{r} \sqrt{-1} \partial\bar\partial\varphi_2 \right) ,
\end{aligned}
\end{equation}
wherever $\delta \omega + \frac{1}{r} \sqrt{-1} \partial\bar\partial\varphi_1 - \frac{1 - r}{r} \sqrt{-1} \partial\bar\partial\varphi_2  \geq 0$.
Combining \eqref{inequality-3-22} with \eqref{condition-3-20},
%\begin{equation}
%	e^{f_1}
%	\geq  r c_1\left( \frac{ \left( \delta \omega +  \frac{1}{r} \sqrt{-1} \partial\bar\partial\varphi_1 - \frac{1 - r}{r} \sqrt{-1} \partial\bar\partial\varphi_2 \right)^n }{\omega^n} \right)^{\frac{1}{n}} 
%	,
%\end{equation}
%and hence
\begin{equation*}
	\frac{e^{ nf_1}}{c^n_1 r^n} \omega^n \geq  \left( \delta \omega +  \frac{1}{r} \sqrt{-1} \partial\bar\partial\varphi_1 - \frac{1 - r}{r} \sqrt{-1} \partial\bar\partial\varphi_2 \right)^n .
\end{equation*}

As in the proof of Theorem~\ref{theorem-3-1}, there is a constant $r_0 > 0$ so that for all $\mathcal{Z} \in M$, there is a local chart satisfying $\omega_{i\bar j} (\mathcal{Z}) = \delta_{ij}$ and
 \begin{equation*}
 \frac{1}{2} \sqrt{-1}  \sum_{i,j} \delta_{ij} dz^i \wedge d\bar z^j \leq \omega \leq 2 \sqrt{-1} \sum_{i,j} \delta_{ij} dz^i \wedge d\bar z^j \quad \text{ in } B(\mathcal{Z},2r_0).
 \end{equation*}
We choose  the maximum point of $\frac{1 - r}{r} \varphi_2 - \frac{1}{r} \varphi_1$ on $M$ as $\mathcal{Z}$,
where $0 < r < \frac{1}{2}$ is to be specified later. 
Let
\begin{equation*}
	u_s (z) = \frac{1}{r}\varphi_1 (z) - \frac{1 - r}{r} \varphi_2 (z) - \frac{1}{r} \varphi_1 (\mathcal{Z}) + \frac{1 - r}{r} \varphi_2 (\mathcal{Z}) + \frac{\delta}{2} |z|^2 - s ,
\end{equation*}
where $0 < s < s_0 := 2 \delta r^2_0$, and denote
\begin{equation*}
	\Omega := B (\mathcal{Z},2r_0) , \quad \Omega_s : = \{ u_s (z) < 0\} .
\end{equation*}

We solve the Dirichlet problem
 \begin{equation*}
 \left\{
 \begin{aligned}
 \left(\sqrt{-1} \partial \bar\partial \psi_{s,k}\right)^n &= \frac{\tau_k (- u_s)}{A_{s,k}} \frac{e^{nf_1}}{c^n_1 r^n} \omega^n , &&\text{ in } \Omega ,\\
 \psi_{s,k} &= 0, && \text{ on } \partial\Omega ,
 \end{aligned}
 \right.
 \end{equation*}
 where  \begin{equation*}
   A_{s,k} := \int_\Omega \tau_k (- u_s) \frac{e^{nf_1}}{c^n_1 r^n}  \omega^n \to A_s := \int_{\Omega_s} (- u_s) \frac{e^{nf_1}}{c^n_1 r^n}  \omega^n ,
 \end{equation*}
   and
  $\tau_k$ is defined in the proof of Theorem~\ref{theorem-3-1}.

Define 
\begin{equation*}
	\Phi := - \epsilon (- \psi_{s,k})^{\frac{n}{n + 1}} - u_s ,
\end{equation*}
where
\begin{equation*}
	\epsilon := \left(\frac{n}{n + 1}\right)^{-\frac{n}{n + 1}} A^{\frac{1}{n + 1}}_{s,k} .
\end{equation*}
The function $\Phi$ is continuous, and hence must archive $\sup_{\bar\Omega} \Phi$ at some point $z_{max} \in \bar\Omega$. If $z_{max} \in \bar\Omega\setminus \Omega_s$,
\begin{equation*}
	\Phi (z_{max}) \leq - \epsilon (-\psi_{s,k})^{\frac{n}{n + 1}} < 0 .
\end{equation*}
If $z_{max} \in \Omega_s$,
\begin{equation*}
\begin{aligned}
	0
	&\geq
	[\partial_i\bar\partial_j \Phi] \\
	&\geq
	\left[\frac{\epsilon n}{n + 1} (- \psi_{s,k})^{-\frac{1}{n + 1}} \partial_i\bar\partial_j \psi_{s,k} - \partial_i \bar\partial_j u_s \right] 
	\\
	&\geq 
	\left[\frac{\epsilon n}{n + 1} (- \psi_{s,k})^{-\frac{1}{n + 1}} \partial_i\bar\partial_j \psi_{s,k} - \partial_i \bar\partial_j \left(\frac{1}{r} \varphi_1 - \frac{1 - r}{r} \varphi_2 \right) - \delta \omega_{i\bar j}\right] 
	,
\end{aligned}
\end{equation*}
and thus 
\begin{equation}
\label{inequality-3-29}
	\delta \omega +   \frac{1}{r} \sqrt{-1} \partial \bar\partial  \varphi_1 - \frac{1 - r}{r} \sqrt{-1}  \varphi_2 
	\geq \frac{\epsilon n}{n + 1} (- \psi_{s,k})^{- \frac{1}{n + 1}} \sqrt{-1} \partial\bar\partial \psi_{s,k} .
\end{equation}
Considering the volume form of two sides of Inequality~\eqref{inequality-3-29},
\begin{equation*}
	1 
	\geq \epsilon^n \left(\frac{n}{n + 1}\right)^n (- \psi_{s,k})^{-\frac{n}{n + 1}}\frac{\tau_k (- u_s)}{A_{s,k}} 
	\geq 
%	\left( \left(\frac{n}{n + 1}\right)^{-\frac{n}{n + 1}} A^{\frac{1}{n + 1}}_{s,k} \right)^n \left(\frac{n}{n + 1}\right)^n (- \psi_{s,k})^{-\frac{n}{n + 1}}\frac{ - u_s}{A_{s,k}} 
%	= 
%	\left(\frac{n}{n + 1}\right)^{\frac{n}{n + 1}} A^{- \frac{1}{n + 1}}_{s,k} (- \psi_{s,k})^{-\frac{n}{n + 1}} ( - u_s ) 
%	= 
	\epsilon^{-1}  (- \psi_{s,k})^{-\frac{n}{n + 1}} ( - u_s ) 
	,
\end{equation*}
that is,
\begin{equation*}
	- u_s - \epsilon (- \psi_{s,k})^{\frac{n }{n + 1}} \leq 0.
\end{equation*}
In sum, we have that on $\bar \Omega$
\begin{equation*}
	- u_s \leq \epsilon (- \psi_{s,k})^{\frac{n}{n + 1}} = \left(\frac{n + 1}{n}\right)^{\frac{n}{n + 1}} A^{\frac{1}{n + 1}}_{s,k} (- \psi_{s,k})^{\frac{n}{n + 1}} .
\end{equation*}
There are constants $\beta > 0$ and $C > 0$ such that
\begin{equation*}
	\int_{\Omega_s} \exp \left( \beta\frac{(- u_s)^{\frac{n + 1}{n}} }{A^{\frac{1}{n}}_{s,k}}\right) \omega^n 
	\leq \int_\Omega \exp \left( - \beta \frac{n + 1}{n}  \psi_{s,k} \right) \omega^n 
	\leq C.
\end{equation*} 
Letting $k \to \infty$,
\begin{equation*}
	\int_{\Omega_s}  \exp \left( \beta\frac{(- u_s)^{\frac{n + 1}{n}} }{A^{\frac{1}{n}}_{s}}\right) \omega^n 
		  \leq C ,
\end{equation*}
and consequently for any $p > 1$, 
\begin{equation}
\label{inequality-3-35}
	\int_{\Omega_s} (- u_s)^p \omega^n   \leq C A^{\frac{ p}{n + 1}}_s .
\end{equation}
An appropriate coefficient $p$ is to be determined later. 
Applying H\"older inequality to $A_s$,
\begin{equation}
\label{inequality-3-36}
\begin{aligned}
	A_s 
	&= \frac{1}{c^n_1 r^n} \int_{\Omega_s} (- u_s) e^{nf_1} \omega^n \\
	&\leq
	\frac{1}{c^n_1 r^n} \left(\int_{\Omega_s} e^{q nf_1} \omega^n\right)^{\frac{1}{p (q - 1)}} \left(\int_{\Omega_s} (- u_s)^{p} \omega^n\right)^{\frac{1}{p}} \left(\int_{\Omega_s} e^{n f_1} \omega^n\right)^{1 - \frac{1}{p} - \frac{1}{p (q - 1)}}\\
	&\leq 
	\frac{C}{ r^n} \Vert e^{n f_1}\Vert^{\frac{q}{p (q - 1)}}_{L^q} 
	A^{\frac{1}{n + 1}}_s 
	\left(\int_{\Omega_s} e^{n f_1} \omega^n\right)^{1 - \frac{q}{p(q - 1)}}
	.
\end{aligned}
\end{equation}
The this line in \eqref{inequality-3-36} is from \eqref{inequality-3-35}. Rewriting \eqref{inequality-3-36}, we obtain
\begin{equation*}
	A_s 
	\leq \frac{C}{r^{n + 1}} \Vert e^{nf_1}\Vert^{\frac{q (n + 1)}{p (q - 1) n}}_{L^q}	\left(\int_{\Omega_s} e^{n f_1} \omega^n\right)^{\left(1 - \frac{q}{p (q - 1)} \right) \frac{n + 1}{n}} 
	.
\end{equation*}
Therefore, for $0 < t < s < s_0$,
\begin{equation*}
\begin{aligned}
	t \int_{\Omega_{s - t}} e^{n f_1} \omega^n 
	&\leq \int_{\Omega_{s - t}} (- u_s) e^{n f_1} \omega^n \\
	&\leq 
	\int_{\Omega_s} (- u_s) e^{n f_1} \omega^n \\
%	&= 
%	c^n_1 r^n A_s \\
	&\leq
	\frac{C}{r } \Vert e^{nf_1}\Vert^{\frac{q (n + 1)}{p (q - 1) n}}_{L^q}	\left(\int_{\Omega_s} e^{n f_1} \omega^n\right)^{\left(1 - \frac{q}{p (q - 1)} \right) \frac{n + 1}{n}} 
		.
\end{aligned}
\end{equation*}
For any $0 < \delta_0 < \frac{1}{n}$, we can choose $p > 1$ sufficiently large so that
\begin{equation*}
	t \int_{\Omega_{s - t}} e^{n f_1} \omega^n 
	\leq
	\frac{C}{r } \Vert e^{nf_1}\Vert^{\frac{1}{n} - \delta_0}_{L^q}	\left(\int_{\Omega_s} e^{n f_1} \omega^n\right)^{1 + \delta_0} 
		.
\end{equation*}
By Lemma~\ref{lemma-3-2}, there is a constant $c_0 > 0$ such that
\begin{equation}
\label{inequality-3-21-1}
	\int_{\Omega_{s_0}} e^{n f_1} \omega^n \geq c_0 \frac{r^{\frac{  1}{\delta_0}}}{\Vert e^{n f_1}\Vert^{\frac{1}{n \delta_0} - 1}_{L^q}} .
\end{equation}

When
\begin{equation*}
	\Vert (\varphi_2 - \varphi_1)^+\Vert_{L^{q'}} \geq \frac{1}{2^{\frac{q^*}{\delta_0 q'} + 1}} ,
\end{equation*}
we have
\begin{equation*}
	\sup_M(\varphi_2 - \varphi_1)
	\leq \Vert\varphi_1\Vert_{L^\infty} 
	\leq 2 \Vert \varphi_1 \Vert_{L^\infty} 	\Vert (\varphi_2 - \varphi_1)^+\Vert^{\frac{\delta_0 q'}{q^* + \delta_0 q'} }_{L^{q'}} .
\end{equation*}

When
\begin{equation*}
	\Vert (\varphi_2 - \varphi_1)^+\Vert_{L^{q'}} < \frac{1}{2^{\frac{q^*}{\delta_0 q'} + 1}} ,
\end{equation*}
we define 
\begin{equation*}
	r: = 	\Vert (\varphi_2 - \varphi_1)^+\Vert^{\frac{\delta_0 q'}{q^* + \delta_0 q'} }_{L^{q'}} < \frac{1}{2} .
\end{equation*}
If 
$
 \varphi_1 (\mathcal{Z}) - (1 - r) \varphi_2 (\mathcal{Z})  \geq - (s_0 + 1 + \Vert\varphi_1\Vert_{L^\infty} ) r
$,
%\begin{equation*}
%\frac{1}{r}\varphi_1 (\mathcal{Z}) - \frac{1 - r}{r} \varphi_2 (\mathcal{Z})  \geq - s_0 - 1 -\Vert\varphi_1\Vert_{L^\infty} ,
%\end{equation*}
then
%\begin{equation*}
%\frac{1}{r}\varphi_1 (z) - \frac{1 - r}{r} \varphi_2 (z) \geq \frac{1}{r}\varphi_1 (\mathcal{Z}) - \frac{1 - r}{r} \varphi_2 (\mathcal{Z})  \geq - s_0 - 1 -\Vert\varphi_1\Vert_{L^\infty} , \forall z \in M
%\end{equation*}
%and consequently
%%\begin{equation}
%%	(1 - r) (\varphi_2 (z) - \varphi_1 (z)) \leq r (s_0 + 1 + \Vert\varphi_1 \Vert_{L^\infty}) .
%%\end{equation}
\begin{equation*}
	\sup_M (\varphi_2  - \varphi_1 ) \leq  2 (s_0 + 1 + \Vert\varphi_1 \Vert_{L^\infty})  r = 2 (s_0 + 1 + \Vert\varphi_1 \Vert_{L^\infty})	\Vert (\varphi_2 - \varphi_1)^+\Vert^{\frac{\delta_0 q'}{q^* + \delta_0 q'} }_{L^{q'}} .
\end{equation*}
Otherwise,
%\begin{equation*}
%\frac{1}{r}\varphi_1 (\mathcal{Z}) - \frac{1 - r}{r} \varphi_2 (\mathcal{Z})  < - s_0 - 1 -\Vert\varphi_1\Vert_{L^\infty} .
%\end{equation*}
on $\Omega_{s_0}$
\begin{equation}
\label{inequality-3-22-1}
 \frac{1}{r}\varphi_1 (z) - \frac{1 - r}{r} \varphi_2 (z) + \frac{\delta}{2} |z|^2 <  \frac{1}{r} \varphi_1 (\mathcal{Z}) - \frac{1 - r}{r} \varphi_2 (\mathcal{Z})  + s_0 < - 1 -\Vert\varphi_1\Vert_{L^\infty}  .
\end{equation}
By  the second inequality in \eqref{inequality-3-22-1}  and   \eqref{inequality-3-21-1}, we derive that
\begin{equation}
\label{inequality-3-23-1}
\begin{aligned}
&\quad	
c_0  r^{\frac{1}{\delta_0}}  \left( \left((1 - r) \varphi_2 -   \varphi_1 \right)^+ - \left(s_0 + \Vert\varphi_1\Vert_{L^\infty} \right) r\right)^{\frac{q'}{q^*}} \\
	&\leq
	c_0  r^{\frac{1}{\delta_0}}  \left((1 - r) \varphi_2 (\mathcal{Z}) -   \varphi_1 (\mathcal{Z})  - \left(s_0 + \Vert\varphi_1\Vert_{L^\infty} \right) r\right)^{\frac{q'}{q^*}} \\
	&\leq 
	\Vert e^{n f_1}\Vert^{\frac{1}{n \delta_0} - 1}_{L^q} \left(\int_{\Omega_{s_0} } e^{n f_1}\omega^n\right) \left((1 - r) \varphi_2 (\mathcal{Z}) -   \varphi_1 (\mathcal{Z})  - \left( s_0 + \Vert\varphi_1\Vert_{L^\infty} \right) r \right)^{\frac{q'}{q^*}} 
	.
\end{aligned}
\end{equation}
Substituting the first inequality in \eqref{inequality-3-22-1} into \eqref{inequality-3-23-1}, 
\begin{equation}
\label{inequality-3-24-1}
\begin{aligned}
&\quad	
c_0  r^{\frac{1}{\delta_0}}  \left( \left((1 - r) \varphi_2 -   \varphi_1 \right)^+ - \left(s_0 + \Vert\varphi_1\Vert_{L^\infty} \right) r\right)^{\frac{q'}{q^*}} \\
%	&\leq
%	c_0  r^{\frac{1}{\delta_0}}  \left((1 - r) \varphi_2 (\mathcal{Z}) -   \varphi_1 (\mathcal{Z})  - \left(s_0 + \Vert\varphi_1\Vert_{L^\infty} \right) r\right)^{\frac{q'}{q^*}} \\
%	&\leq 
%	\Vert e^{n f_1}\Vert^{\frac{1}{n \delta_0} - 1}_{L^q} \left(\int_{\Omega_{s_0} } e^{n f_1}\omega^n\right) \left((1 - r) \varphi_2 (\mathcal{Z}) -   \varphi_1 (\mathcal{Z})  - \left( s_0 + \Vert\varphi_1\Vert_{L^\infty} \right) r \right)^{\frac{q'}{q^*}} \\
	&< 
	\Vert e^{n f_1}\Vert^{\frac{1}{n \delta_0} - 1}_{L^q}
	\int_{\Omega_{s_0}}   \left( (1 - r) \varphi_2 -   \varphi_1 - \left(\frac{\delta}{2} |z|^2 + \Vert\varphi_1\Vert_{L^\infty} \right) r  \right)^{\frac{q'}{q^*}} e^{nf_1} \omega^n  .
\end{aligned}
\end{equation}
Applying H\"older inequality to \eqref{inequality-3-24-1},
\begin{equation}
\label{inequality-3-45}
\begin{aligned}
&\quad	
c_0  r^{\frac{1}{\delta_0}}  \left( \left((1 - r) \varphi_2 -   \varphi_1 \right)^+ - \left(s_0 + \Vert\varphi_1\Vert_{L^\infty} \right) r\right)^{\frac{q'}{q^*}} \\
%	&\leq
%	c_0  r^{\frac{1}{\delta_0}}  \left((1 - r) \varphi_2 (\mathcal{Z}) -   \varphi_1 (\mathcal{Z})  - \left(s_0 + \Vert\varphi_1\Vert_{L^\infty} \right) r\right)^{\frac{q'}{q^*}} \\
%	&\leq 
%	\Vert e^{n f_1}\Vert^{\frac{1}{n \delta_0} - 1}_{L^q} \left(\int_{\Omega_{s_0} } e^{n f_1}\omega^n\right) \left((1 - r) \varphi_2 (\mathcal{Z}) -   \varphi_1 (\mathcal{Z})  - \left( s_0 + \Vert\varphi_1\Vert_{L^\infty} \right) r \right)^{\frac{q'}{q^*}} \\
	&< 
%	\Vert e^{n f_1}\Vert^{\frac{1}{n \delta_0} - 1}_{L^q}
%	\int_{\Omega_{s_0}}   \left( (1 - r) \varphi_2 -   \varphi_1 - \left(\frac{\delta}{2} |z|^2 + \Vert\varphi_1\Vert_{L^\infty} \right) r  \right)^{\frac{q'}{q^*}} e^{nf_1} \omega^n  \\
%	&\leq 
	\Vert e^{n f_1}\Vert^{\frac{1}{n \delta_0}  }_{L^q}
	 \left( \int_{\Omega_{s_0}} \left( (1 - r) \varphi_2 -  \varphi_1 - \left(\frac{\delta}{2}  |z|^2 + \Vert\varphi_1\Vert_{L^\infty} \right) r \right)^{q'} \omega^n\right)^{\frac{1}{q^*}} \\
%	&\leq 
%	\Vert e^{n f_1}\Vert^{\frac{1}{n \delta_0} }_{L^q}
%	  \left( \int_M \left(\left( (1 - r) (\varphi_2 -  \varphi_1) - r \varphi_1 - \left(\frac{\delta }{2} |z|^2 +\Vert\varphi_1\Vert_{L^\infty}\right) r \right)^+\right)^{q'} \omega^n\right)^{\frac{1}{q^*}} \\
	&\leq 
	\Vert e^{n f_1}\Vert^{\frac{1}{n \delta_0} }_{L^q}
	  \left( \int_M \left( \left( (1 - r) (\varphi_2 -  \varphi_1)   \right)^+\right)^{q'} \omega^n\right)^{\frac{1}{q^*}} \\
%	&\leq \Vert e^{n f_1}\Vert^{\frac{1}{n \delta_0} }_{L^q}
%	  \left(\int_M (1 - r)^{q'} \left((\varphi_2 -  \varphi_1)^+ \right)^{q'}    \omega^n\right)^{\frac{1}{q^*}} \\
%	&\leq \Vert e^{n f_1}\Vert^{\frac{1}{n \delta_0} }_{L^q}
%	  \left( \int_M \left( (\varphi_2 -  \varphi_1)^+ \right)^{q'}    \omega^n\right)^{\frac{1}{q^*}}   \\
	&\leq \Vert e^{n f_1}\Vert^{\frac{1}{n \delta_0} }_{L^q}
	   \Vert (\varphi_2 -  \varphi_1)^+     \Vert^{\frac{q'}{q^*}}_{L^{q'}} 
	   ,
\end{aligned}
\end{equation}
and hence we obtain by rewriting \eqref{inequality-3-45}% and the fact that $r < \frac{1}{2}$
,
\begin{equation*}
\begin{aligned}
	 \sup_M (\varphi_2  - \varphi_1 )
	 &\leq C \Vert e^{n f_1}\Vert^{\frac{q^*}{n \delta_0 q'} }_{L^q}    \frac{ \Vert (\varphi_2 -  \varphi_1)^+     \Vert_{L^{q'}}}{r^{\frac{q^*}{\delta_0 q'}}} + 2 (s_0 + \Vert \varphi_1 \Vert_{L^\infty}) r \\
	 &= \left(C \Vert e^{n f_1}\Vert^{\frac{q^*}{n \delta_0 q'} }_{L^q}   + 2 (s_0 + \Vert \varphi_1 \Vert_{L^\infty}) \right) 	\Vert (\varphi_2 - \varphi_1)^+\Vert^{\frac{\delta_0 q'}{q^* + \delta_0 q'} }_{L^{q'}} 
	 .
\end{aligned}
\end{equation*}

\end{proof}

\medskip

\subsection{Viscosity solution}

As the positivity of $\chi + \sqrt{-1} \partial\bar\partial \varphi$ is dependent on $\omega$ if the equation is not of complex Monge-Amp\`ere type, we cannot localize the equation to a domain in $\mathbb{C}^n$ and apply Hartogs Lemma. 
%We do not have the $L^1$ compactness in hand. 
Next, we establish a compact embedding result.
\begin{lemma}
\label{lemma-3-4}
$L^\infty$ bounded  solution set is precompact in  $L^{q'}$ norm for $1 \leq q' < + \infty$.
 
\end{lemma}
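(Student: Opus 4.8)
The plan is to establish precompactness in $L^{q'}$ via the standard route: show that the solution set is bounded and equicontinuous in a weaker topology, then upgrade. Concretely, let $\{\varphi_j\}$ be a sequence of $L^\infty$-bounded solutions to the relevant Hessian equations. By Theorem~\ref{theorem-3-1} the $\varphi_j$ are uniformly bounded, so up to a subsequence $\varphi_j \to \varphi$ weakly in, say, $L^1$ (indeed weak-$*$ in $L^\infty$). The key point is that each $\varphi_j$ is $m$-$(\chi+\delta\omega,\omega)$-subharmonic (or $1$-$\omega$-subharmonic, since $\Gamma^m \subset \Gamma^1$), so $\chi + \delta\omega + \sqrt{-1}\partial\bar\partial\varphi_j \geq 0$ in the trace sense, i.e. $\Delta_\omega \varphi_j \geq -\operatorname{tr}_\omega(\chi+\delta\omega) \geq -C$. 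Thus $\Delta_\omega \varphi_j + C \geq 0$ and $\int_M(\Delta_\omega\varphi_j + C)\,\omega^n = C\operatorname{Vol}(M)$ is uniformly bounded, which gives a uniform $W^{1,p}$ bound for every $p < \frac{2n}{2n-1}$ by the standard estimate for functions with bounded Laplacian-measure (this is exactly the $\|\varphi\|_{L^1}$ / $\|\nabla\varphi\|_{L^1}$ control used for quasi-psh functions, cf. the Tosatti--Weinkove references already cited in the proof of Theorem~\ref{theorem-3-1}).

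From here the argument is routine functional analysis. First I would invoke the compact Sobolev embedding $W^{1,p}(M) \hookrightarrow\hookrightarrow L^{p^*}(M)$ for the appropriate $p^*$, combined with the $L^\infty$ bound and interpolation: since $\|\varphi_j\|_{L^\infty}\leq C$ uniformly, for any finite $q'$ one has $\|\varphi_j - \varphi_k\|_{L^{q'}} \leq \|\varphi_j - \varphi_k\|_{L^1}^{\theta}\,(2C)^{1-\theta}$ for a suitable $\theta = \theta(q')\in(0,1]$ by Hölder, so it suffices to extract a subsequence converging in $L^1$. That convergence is supplied either directly by the compact embedding $W^{1,p}\hookrightarrow\hookrightarrow L^1$, or by the classical fact that a sequence of quasi-psh functions with a uniform $L^\infty$ bound is precompact in $L^1_{\mathrm{loc}}$ (hence in $L^1(M)$ on a compact manifold): locally, after adding the local potential of $\chi+\delta\omega$, the functions become genuinely plurisubharmonic and uniformly bounded, so Hartogs-type compactness applies on each coordinate ball, and a partition-of-unity/diagonal argument globalizes it.

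The main obstacle — and the reason this lemma is stated separately — is exactly the remark made just before it: one cannot naively localize the Hessian equation and quote Hartogs' lemma, because $m$-$\omega$-subharmonicity is not preserved under restriction to a ball in a way that makes the functions plurisubharmonic there. So the careful point is to use only the weakest positivity that \emph{does} localize, namely $1$-$\omega$-subharmonicity, equivalently $\chi+\delta\omega+\sqrt{-1}\partial\bar\partial\varphi_j\in\bar\Gamma^1$, i.e. the trace/Laplacian lower bound; this is legitimate because $\Gamma^m\subset\Gamma^1$, and it is the only fact about the solutions we need for compactness. Once that is in hand, the $W^{1,p}$ estimate and Sobolev compactness do all the work and no genuinely new ingredient is required.

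I would therefore structure the proof as: (i) note $\varphi_j$ uniformly bounded by Theorem~\ref{theorem-3-1}; (ii) note $\sqrt{-1}\partial\bar\partial\varphi_j \geq -(\chi+\delta\omega)$, hence $\Delta_\omega\varphi_j \geq -C_0$, giving a uniform bound on the total mass of $\Delta_\omega\varphi_j$ and thus a uniform $W^{1,p}$ bound for $1\leq p<\frac{2n}{2n-1}$; (iii) apply the compact embedding $W^{1,p}(M)\hookrightarrow\hookrightarrow L^1(M)$ to extract an $L^1$-convergent subsequence; (iv) interpolate with the $L^\infty$ bound via Hölder to promote the convergence to $L^{q'}$ for every finite $q'$. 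This yields precompactness of the solution set in $L^{q'}$.
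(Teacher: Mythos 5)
Your main route is correct and is in the same spirit as the paper: both arguments recognize that only the weakest positivity, $1$-$\omega$-subharmonicity, is needed, and both finish by extracting an $L^1$-convergent subsequence and interpolating against the uniform $L^\infty$ bound. The difference is in how the initial compactness is obtained. You bound the total mass of $\Delta_\omega\varphi_j$ (using $\Delta_\omega\varphi_j \geq -C_0$ and $\int_M\Delta_\omega\varphi_j\,\omega^n = 0$) and then invoke potential-theoretic $W^{1,p}$ regularity for $p < \tfrac{2n}{2n-1}$ plus compact Sobolev embedding. The paper instead uses a direct energy estimate: multiplying the positive current $(\chi+\sqrt{-1}\partial\bar\partial\varphi)\wedge\omega^{n-1}$ by $-\varphi\geq 0$ and integrating by parts gives
\begin{equation*}
\|\nabla\varphi\|_{L^2}^2 \;=\; \int_M -\varphi\,\sqrt{-1}\partial\bar\partial\varphi\wedge\omega^{n-1} \;\leq\; \int_M -\varphi\,(\chi+\sqrt{-1}\partial\bar\partial\varphi)\wedge\omega^{n-1}\;\leq\;\|\varphi\|_{L^\infty}\int_M\chi\wedge\omega^{n-1},
\end{equation*}
which yields a uniform $W^{1,2}$ bound in one line, strictly stronger than your $W^{1,p}$ with $p$ close to $1$, and with no appeal to elliptic regularity for measures. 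Both routes finish identically via Rellich and the elementary interpolation $\int_M|\varphi_j-\varphi_k|^{q'}\leq\|\varphi_j-\varphi_k\|_{L^\infty}^{q'-1}\int_M|\varphi_j-\varphi_k|$.

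One caution about the ``alternative'' Hartogs route you sketch as a backup: the claim that adding the local potential of $\chi+\delta\omega$ makes the $\varphi_j$ genuinely plurisubharmonic is false. The condition $\chi+\delta\omega+\sqrt{-1}\partial\bar\partial\varphi_j\in\bar\Gamma^1_\omega$ only says the trace is nonnegative, i.e.\ $\Delta_\omega(\varphi_j+\text{potential})\geq 0$ (subharmonic with respect to the Riemannian Laplacian), not that the complex Hessian is nonnegative. So Hartogs-type compactness for psh functions does not apply; this is precisely the obstruction the paper flags before the lemma. Your primary $W^{1,p}$ argument correctly avoids this, so the gap is confined to the fallback you mention, but it is worth excising.
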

\begin{proof}

 Since $(\chi + \sqrt{-1} \partial\bar\partial \varphi) \wedge \omega^{n - 1} \geq 0$ and $ \varphi \leq 0$, we have
 \begin{equation*}
 \label{equation-3-2}
 \begin{aligned}
 	\Vert \varphi \Vert_{L^\infty} \int_M \chi \wedge \omega^{n - 1}
 	&\geq
 	\int_M - \varphi (\chi + \sqrt{-1} \partial\bar\partial \varphi) \wedge \omega^{n - 1}  \\
 	&\geq \int_M -\varphi \sqrt{-1}  \partial\bar\partial \varphi \wedge \omega^{n - 1} \\
 	&= \Vert \nabla \varphi\Vert^2_{L^2} .
 \end{aligned}
 \end{equation*}
 If $\{\varphi_\alpha\}$ is also a bounded set in $L^\infty$ norm, then %by compact embedding 
 we know that $\{\varphi_\alpha\}$ is precompact in $L^1$ norm. 
From a $L^1$ convergent sequence $\{\varphi_i\}$, we can further pick a subsequence $\{\varphi_{i_j}\}$ which converges almost everywhere. 
 Therefore for any fixed $1 \leq q' < +\infty$,
 \begin{equation*}
 \begin{aligned}
 	\int_M |\varphi_{i_j} - \varphi_{i_k}|^{q'} \omega^n 
 	&\leq 	\Vert\varphi_{i_j} - \varphi_{i_k}\Vert^{q' -1}_{L^\infty}\int_M |\varphi_{i_j} - \varphi_{i_k}| \omega^n \\
 	&\leq C \int_M |\varphi_{i_j} - \varphi_{i_k}| \omega^n \\
 	&\to 0 \quad  (j,k \to  + \infty) , 
 \end{aligned}
\end{equation*}
which implies $\{\varphi_{i_j}\}$ is Cauchy in $L^{q'}$.

\end{proof}

%\newpage

With Lemma~\ref{lemma-3-4}, there is a standard procedure to construct viscosity solutions~\cite{Lions1983}.
\begin{theorem}
\label{theorem-3-5}
%The function $\mathcal{F}$  is concave, and   $\forall$ $1 \leq i \leq n$
%\begin{equation*}
%\frac{\partial \mathcal{F}}{\partial \lambda_i } \geq 0 .
%\end{equation*}
The function $F$ is concave, and satisfies that
there is a constant $c_1 > 0$ so that 
wherever $\hat\chi \geq 0$, we  have
\begin{equation*}
F (\chi + \hat\chi) \geq F (\hat\chi) \geq c_1 \left(\frac{\hat\chi^n}{\omega^n}\right)^{\frac{1}{n}} .
\end{equation*}
Suppose that  $e^f \in C (M)$ satisfies that  there is a sequence $\{e^{f_i}\} \subset C (M) $   uniformly  convergent to $e^f$ 
so that 
for each $i$ there is a $C^2 $ classical solution $\varphi_i$ solving
\begin{equation*}
F (\chi + \delta \omega + \sqrt{-1} \partial\bar\partial \varphi_i) = e^{f_i} , \quad \sup_M \varphi_i = 0.
\end{equation*}
Then there is a viscosity solution to Equation~\eqref{equation-3-19}.

\end{theorem}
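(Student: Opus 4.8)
I would obtain the viscosity solution $\varphi$ as a \emph{uniform} limit of the classical solutions $\varphi_i$, and then invoke the stability of viscosity solutions under uniform convergence of the solutions together with their right-hand sides, which is the standard procedure of Lions~\cite{Lions1983}. First I would establish a uniform $L^\infty$ bound on $\{\varphi_i\}$. Since $\{e^{f_i}\}\subset C(M)$ converges uniformly, these functions are uniformly bounded, so $\int_M e^{nf_i}(1+n|f_i|)^p\,\omega^n$ is bounded independently of $i$ for every $p>n$. Moreover, hypothesis~\eqref{condition-3-20} shows that at any point where $\hat\chi\ge 0$ and $F(\chi+\hat\chi)\le e^{f_i}$ one has $c_1\bigl(\hat\chi^n/\omega^n\bigr)^{1/n}\le F(\hat\chi)\le F(\chi+\hat\chi)\le e^{f_i}$, i.e. $\hat\chi^n\le e^{n(f_i-\ln c_1)}\omega^n$; thus structure condition~\eqref{structure-condition-1} holds with $f$ replaced by $f_i-\ln c_1$. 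Hence Theorem~\ref{theorem-3-1} applies uniformly and yields $-C_0\le\varphi_i\le 0$ on $M$ with $C_0$ independent of $i$.

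Next I would upgrade compactness in $L^{q'}$ to uniform convergence. By Lemma~\ref{lemma-3-4}, $\{\varphi_i\}$ is precompact in $L^{q'}(M)$ for a fixed $1\le q'<\infty$, so after passing to a subsequence $\varphi_i\to\varphi$ in $L^{q'}$ and almost everywhere. Fixing $q>1$ (the $e^{f_i}$ are uniformly bounded, hence uniformly bounded in $L^q$) and $\epsilon>0$, I would apply Theorem~\ref{theorem-3-3} to the pair $(\varphi_1,\varphi_2)=(\varphi_i,\varphi_j)$ --- legitimate since $F(\chi+\delta\omega+\sqrt{-1}\partial\bar\partial\varphi_i)=e^{f_i}\ge 0$, $\sup_M\varphi_j=0$, and all relevant norms are uniformly controlled by the first step --- to get
\begin{equation*}
	\sup_M(\varphi_j-\varphi_i)\le C\,\bigl\Vert(\varphi_j-\varphi_i)^+\bigr\Vert_{L^{q'}}^{\,\theta},\qquad\theta=\frac{q'}{nq^*+q'+\epsilon}>0,
\end{equation*}
with $C$ independent of $i,j$. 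Interchanging $i$ and $j$ and combining gives $\Vert\varphi_i-\varphi_j\Vert_{L^\infty}\le C\Vert\varphi_i-\varphi_j\Vert_{L^{q'}}^{\theta}$, so the subsequence is Cauchy in $L^\infty$ and converges uniformly to a continuous $\varphi$ with $\sup_M\varphi=0$.

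It would then remain to verify that $\varphi$ solves~\eqref{equation-3-19} in the viscosity sense, via the classical perturbation argument. Let $P\in C^2$ touch $\varphi$ from above at $z_0$; working in a normal chart at $z_0$ (as in the proof of Theorem~\ref{theorem-3-1}) and replacing $P$ by $P+\sigma|z-z_0|^2$, I may assume $\varphi-P$ has a strict maximum over a small ball. Uniform convergence forces $\varphi_i-P$ to attain an interior maximum at some $z_i\to z_0$, where $\sqrt{-1}\partial\bar\partial\varphi_i(z_i)\le\sqrt{-1}\partial\bar\partial P(z_i)+C\sigma\,\omega$. Since $\chi+\delta\omega+\sqrt{-1}\partial\bar\partial\varphi_i(z_i)$ has eigenvalues in $\Gamma$ and $\Gamma+\Gamma^n\subseteq\Gamma$, the test form remains admissible, and by ellipticity of $F$,
\begin{equation*}
	e^{f_i(z_i)}=F\bigl(\chi+\delta\omega+\sqrt{-1}\partial\bar\partial\varphi_i\bigr)(z_i)\le F\bigl(\chi+\delta\omega+\sqrt{-1}\partial\bar\partial P+C\sigma\,\omega\bigr)(z_i).
\end{equation*}
Letting $i\to\infty$ and then $\sigma\to 0$ gives the viscosity subsolution inequality at $z_0$; testing from below with $P-\sigma|z-z_0|^2$ gives the supersolution inequality symmetrically. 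Hence $\varphi$ is a viscosity solution with $\sup_M\varphi=0$.

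The main obstacle will be this last step: one must fix the viscosity definition so that it is compatible with the Hessian structure --- the test $(1,1)$-forms must have eigenvalues in $\bar\Gamma$ --- and remember that positivity is measured with respect to $\omega$ rather than the Euclidean metric, so every localization has to be performed in a normal chart exactly as in Theorem~\ref{theorem-3-1}. The remaining points are routine once one checks that the constants coming from Theorems~\ref{theorem-3-1} and~\ref{theorem-3-3} are uniform in $i$, which reduces to the uniform bounds $\sup_i\Vert e^{f_i}\Vert_{L^\infty}<\infty$ and $\sup_i\Vert\varphi_i\Vert_{L^\infty}<\infty$ obtained in the first step.
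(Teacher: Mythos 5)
Your proposal follows essentially the same route as the paper: deduce a uniform $L^\infty$ bound from Theorem~\ref{theorem-3-1} (after checking the structure condition, which you make explicit whereas the paper leaves it implicit), use Lemma~\ref{lemma-3-4} for $L^{q'}$-precompactness, upgrade to an $L^\infty$-Cauchy sequence via the stability estimate of Theorem~\ref{theorem-3-3}, and then verify the viscosity property by the classical touching argument. The subsolution half of that last step is fine: at the interior maximum $z_i$ of $\varphi_i - P$ one has $\chi+\delta\omega+\sqrt{-1}\partial\bar\partial P(z_i)\ge\chi+\delta\omega+\sqrt{-1}\partial\bar\partial\varphi_i(z_i)\in\Gamma$, so the test form is automatically admissible and ellipticity applies directly.

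The supersolution half, however, is \emph{not} symmetric, and ``testing from below gives the inequality symmetrically'' papers over a genuine gap. At an interior minimum $z_i$ of $\varphi_i - P$ you only get $\chi+\delta\omega+\sqrt{-1}\partial\bar\partial P(z_i)\le\chi+\delta\omega+\sqrt{-1}\partial\bar\partial\varphi_i(z_i)$, which gives you no reason to believe the eigenvalue vector of $\chi+\delta\omega+\sqrt{-1}\partial\bar\partial P$ lies in $\Gamma$ at $z_i$ — so $F$ may simply not be defined there and you cannot invoke ellipticity. The paper resolves this with a case split: using Lipschitz continuity of eigenvalues, $\bm{\lambda}(\chi+\delta\omega+\sqrt{-1}\partial\bar\partial P)(z_i)\to\bm{\lambda}(\chi+\delta\omega+\sqrt{-1}\partial\bar\partial P)(z_0)$; if the limit lies outside $\Gamma$ the supersolution condition is vacuous at $z_0$, and if it lies in $\Gamma$ then openness of $\Gamma$ lets you truncate the sequence so that $F$ is defined at $z_i$ for all large $i$ and pass to the limit. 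You do flag the admissibility of test forms as ``the main obstacle'' at the end, which shows you see the issue, but the body of the argument still needs this dichotomy spelled out to be complete.
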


\begin{proof}

Since $e^{f_j} \to e^f$ in $L^p$ for $p > n$, $\{\varphi_j\}$ are uniformly bounded. Lemma~\ref{lemma-3-4} tells us that $\{\varphi_j\}$ is convergent in $L^1$ norm by passing to a subsequence.

By Theorem~\ref{theorem-3-3}, there is a constant $C > 0$ such that
\begin{equation*}
	\sup_M (\varphi_j - \varphi_i) 
	\leq C \Vert (\varphi_j - \varphi_i)^+\Vert^{\frac{1}{n q^* + 2}}_{L^1} 
	\leq C \Vert \varphi_j - \varphi_i\Vert^{\frac{1}{n q^* + 2}}_{L^1}
	.
\end{equation*}
Switching $i$ and $j$, we hvae
\begin{equation}
\label{inequality-3-53}
	\Vert \varphi_j - \varphi_i\Vert_{L^\infty} \leq C \Vert \varphi_j - \varphi_i\Vert^{\frac{1}{n q^* + 2}}_{L^1} .
\end{equation}
We can choose inductively an increasing sequence $\{i_k \}$ such that $\forall j  \geq i_k$,
\begin{equation*}
	\Vert \varphi_j - \varphi_{i_k}\Vert_{L^1} \leq \frac{1}{2^{(n q^* + 2) k}} ,
\end{equation*}
and hence by \eqref{inequality-3-53} ,
\begin{equation}
	\Vert \varphi_j - \varphi_{i_k}\Vert_{L^\infty} \leq \frac{1}{2^k} .
\end{equation}
Therefore $\{\varphi_{i_k}\}$ is a Cauchy sequence in $L^\infty$ norm, which is uniformly convergent to a function $\varphi \in C(M)$.

For any point $z_0 \in M$ and any $C^2$ function $u$ such that $u(z_0) = \varphi(z_0)$ and
$u > \varphi$ in a punctured neighborhood of $z_0$. 
Choosing $\eta > 0$ small, we have $\max_{\partial B_{\eta} (z_0)} (\varphi - u) < 0$. For $j$ sufficiently large,
\begin{equation*}
\max_{\overline{B_\eta (z_0)}} (\varphi_j - u) 
%\geq \varphi_j (z_0) - \varphi (z_0) 
> \max_{\partial B_{\eta} (z_0) }(\varphi_j - u) .
\end{equation*}
So there exists a point $z_\delta \in B_\eta (z_0)$ with 
\begin{equation*}
\max_{\overline{B_\eta (z_0)}} (\varphi_j - u) = (\varphi_j - u)  (z_\eta).
\end{equation*}
We may pick a sequence $\eta_k \to 0 +$, and obtain $z_k \in B_{\eta_k} (z_0)$ and $\varphi_k $ such that
\begin{equation*}
\max_{\overline{B_{\eta_k} (z_0)}} (\varphi_k - u) = (\varphi_k- u)  (z_{k}) .
\end{equation*}
Therefore,
\begin{equation*}
F (\chi + \delta\omega + \sqrt{-1} \partial\bar\partial u) (z_k) \geq F (\chi + \delta\omega + \sqrt{-1}  \partial\bar\partial \varphi_k) (z_k) = e^{f_k} (z_k) .
\end{equation*}
Letting $k \to +\infty$, $F (\chi + \delta\omega + \sqrt{-1} \partial\bar\partial u) (z_0) \geq e^{f} (z_0)$. So $\varphi$ is a subsolution.

Similarly, we can also prove that $\varphi$ is a supersolution. 
%Since $\mathcal{F}$ is not defined on the complement of $\Gamma$, we need to be more careful.
For any point $z_0 \in M$ and any $C^2$ function $u$ such that $u(z_0) = \varphi(z_0)$ and
$u < \varphi$ in a punctured neighborhood of $z_0$. 
Choosing $\eta > 0$ small, we have $\min_{\partial B_{\eta} (z_0)} (\varphi - u) > 0$. For $j$ sufficiently large,
\begin{equation*}
\min_{\overline{B_\eta (z_0)}} (\varphi_j - u) 
%\leq \varphi_j (z_0) - \varphi (z_0) 
< \min_{\partial B_{\eta} (z_0) }(\varphi_j - u) .
\end{equation*}
So there exists a point $z_\delta \in B_\eta (z_0)$ with 
\begin{equation*}
\min_{\overline{B_\eta (z_0)}} (\varphi_j - u) = (\varphi_j - u)  (z_\eta).
\end{equation*}
We may pick a sequence $\eta_k \to 0 +$, and obtain $z_k \in B_{\eta_k} (z_0)$ and $\varphi_k $ such that
\begin{equation*}
\min_{\overline{B_{\eta_k} (z_0)}} (\varphi_k - u) = (\varphi_k- u)  (z_{k}) .
\end{equation*}
Since the eigenvalues of a matrix is Lipschitz continuous with respect to the entries after appropriate relabeling,
\begin{equation}
	\lim_{k \to \infty} \bm{\lambda} (\chi + \delta \omega + \sqrt{-1} \partial\bar\partial u) (z_k) = \bm{\lambda} (\chi + \delta \omega + \sqrt{-1} \partial\bar\partial u) (z_0) .
\end{equation}
If $\bm{\lambda} (\chi + \delta \omega + \sqrt{-1} \partial\bar\partial u) (z_0) \not\in \Gamma$, $\varphi$ is a supersolution. Otherwise, by observing that $\Gamma$ is open and truncating the sequence,
we obtain
\begin{equation*}
F (\chi + \delta\omega + \sqrt{-1} \partial\bar\partial u) (z_0) \leq e^{f} (z_0),
\end{equation*}
which also implies that $\varphi$ is a supersolution.

\end{proof}
%\begin{remark}
%Observing the simple fact that $\varphi_{i_k} + \frac{1}{2^{k - 1}}$ is decreasing and convergent to $\varphi$, it is possible to claim that $\varphi$ is a weak solution in pluripotential sense if the relevant pluripotential solution theory is well defined for equation form $F$.
%\end{remark}

\medskip

\subsection{Viscosity solution to complex Hessian equations}

%\begin{remark}
Theorem~\ref{theorem-3-5} tells us that the key to construct a viscosity solution is to find out an available smooth approximation. 
For complex Hessian equations, % and Hessian-quotient equations,
such an approximation is possible by the works of \cite{HouMaWu2010}\cite{DinewKolodziej2017}\cite{Sun2017u}\cite{Szekelyhidi}\cite{Zhang2017} even on Hermitian manifolds.
%\end{remark}
\begin{theorem}
\label{theorem-3-7}
Let $\chi \in \bar\Gamma^m_\omega$ be a closed real $(1,1)$-form on a compact K\"ahler manifold $(M,\omega)$ without boundary.  
Suppose that $e^f \in C(M)$ satisfies
\begin{equation*}
\int_M (\chi + \delta \omega)^m\wedge \omega^{n - m} = \int_M e^{mf} \omega^n
.
\end{equation*}
Then the complex Hessian equation
\begin{equation}
\label{equation-3-56}
(\chi + \delta \omega + \sqrt{-1} \partial\bar\partial \varphi)^m \wedge \omega^{n - m} = e^{mf} \omega^n, \quad \sup_M \varphi =0
\end{equation}
 admits a  viscosity solution.
\end{theorem}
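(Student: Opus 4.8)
The plan is to recognize Equation~\eqref{equation-3-56} as a special case of Equation~\eqref{equation-3-19} and then quote Theorem~\ref{theorem-3-5}; the whole matter reduces to checking that the complex Hessian operator meets the structure hypotheses of that theorem and to exhibiting a smooth approximation of $e^f$ whose associated equations are classically solvable.

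First I would set $\mathcal{F}(\bm{\lambda}) := \bigl(S_m(\bm{\lambda})/C^m_n\bigr)^{1/m}$ on $\Gamma^m$ and $F(\alpha) := \mathcal{F}(\bm{\lambda}(\alpha))$, so that, in view of $S_m(\alpha) = C^m_n \frac{\alpha^m \wedge \omega^{n-m}}{\omega^n}$, Equation~\eqref{equation-3-56} is literally $F(\chi + \delta\omega + \sqrt{-1}\partial\bar\partial\varphi) = e^f$ with $\sup_M \varphi = 0$. Since $\chi \in \bar\Gamma^m_\omega$, adding $\delta\omega$ pushes the eigenvalue set into the open cone, so $\chi + \delta\omega \in \Gamma^m_\omega$. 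Concavity of $F$ is G\r{a}rding's inequality combined with the classical fact that a concave symmetric function of the eigenvalues is a concave function of the Hermitian matrix; in particular $S_m^{1/m}$, being concave and homogeneous of degree $1$ on the convex cone of matrices with eigenvalues in $\bar\Gamma^m$, is superadditive there. Applying superadditivity with the two summands $\chi$ (eigenvalues in $\bar\Gamma^m$) and $\hat\chi \geq 0$ (eigenvalues in $\bar\Gamma^n \subset \bar\Gamma^m$) yields $F(\chi + \hat\chi) \geq F(\hat\chi)$, while Maclaurin's inequality gives $F(\hat\chi) = \bigl(S_m(\hat\chi)/C^m_n\bigr)^{1/m} \geq \bigl(S_n(\hat\chi)/C^n_n\bigr)^{1/n} = \bigl(\hat\chi^n/\omega^n\bigr)^{1/n}$ for $\hat\chi \geq 0$. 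Hence the hypotheses on $F$ in Theorem~\ref{theorem-3-5} hold with $c_1 = 1$, and notice that no positivity of $\chi$ beyond $\chi \in \bar\Gamma^m_\omega$ is used.

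Next I would produce the smooth data. As $e^f \in C(M)$ is bounded away from $0$ on the compact $M$, choose smooth positive functions $g_i$ with $g_i \to e^{mf}$ uniformly, and renormalize by setting $\tilde g_i := \frac{c}{\int_M g_i\, \omega^n}\, g_i$, where $c := \int_M (\chi + \delta\omega)^m \wedge \omega^{n-m}$. Because $\int_M g_i\, \omega^n \to \int_M e^{mf}\, \omega^n = c$ by hypothesis, the normalizing constants tend to $1$, so $\tilde g_i \to e^{mf}$ uniformly, $\tilde g_i > 0$ is smooth, and $\int_M \tilde g_i\, \omega^n = c$. Putting $f_i := \frac{1}{m}\log \tilde g_i \in C^\infty(M)$, one has $e^{mf_i} = \tilde g_i$ with $\int_M e^{mf_i}\, \omega^n = \int_M (\chi + \delta\omega)^m \wedge \omega^{n-m}$ and $e^{f_i} \to e^f$ uniformly. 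For each $i$ the non-degenerate smooth complex Hessian equation $(\chi + \delta\omega + \sqrt{-1}\partial\bar\partial\varphi_i)^m \wedge \omega^{n-m} = e^{mf_i}\, \omega^n$, $\sup_M \varphi_i = 0$, then admits a (unique) smooth admissible solution $\varphi_i$: the closed form $\chi + \delta\omega \in \Gamma^m_\omega$ is itself a subsolution for the continuity method, the total masses agree by construction, and the a priori estimates of Hou--Ma--Wu~\cite{HouMaWu2010} and Dinew--Kolodziej~\cite{DinewKolodziej2017}, together with the solvability results of \cite{Sun2017u}\cite{Zhang2017}\cite{Szekelyhidi} (or \cite{Hou2009} when $\chi = \omega$), give $\varphi_i \in C^\infty(M)$.

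With $F$ as above, the uniformly convergent data $\{e^{f_i}\} \subset C(M)$, and the $C^2$ (in fact $C^\infty$) classical solutions $\varphi_i$, Theorem~\ref{theorem-3-5} produces a viscosity solution of $F(\chi + \delta\omega + \sqrt{-1}\partial\bar\partial\varphi) = e^f$, $\sup_M \varphi = 0$, which is precisely Equation~\eqref{equation-3-56}. I expect the only genuinely delicate point internal to the argument to be arranging the uniform approximation so that the cohomological constraint $\int_M e^{mf_i}\, \omega^n = \int_M (\chi + \delta\omega)^m \wedge \omega^{n-m}$ holds exactly for every $i$ --- without it no smooth $\varphi_i$ exists --- and this is where the compatibility hypothesis $\int_M e^{mf}\, \omega^n = \int_M (\chi + \delta\omega)^m \wedge \omega^{n-m}$ enters; all the hard analysis, namely the sharp second-order and gradient estimates for the smooth equation, is imported from the cited literature rather than reproved here.
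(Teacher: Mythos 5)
Your proposal is essentially the same as the paper's proof: both smoothly approximate $e^{mf}$ by strictly positive functions, renormalize so the compatibility condition $\int_M e^{mf_i}\,\omega^n = \int_M (\chi + \delta\omega)^m\wedge\omega^{n-m}$ holds exactly at each stage, solve the resulting non-degenerate smooth equations, and then invoke Theorem~\ref{theorem-3-5} to pass to the limit. The only cosmetic difference in the construction of the approximation is how the renormalization is organized: you scale $g_i$ multiplicatively by $c/\int_M g_i\,\omega^n$, while the paper first mollifies $e^{mf}+2^{-i}$ and then absorbs the mass defect into a multiplicative constant $e^{mb_i}$; these are the same idea with the normalization performed at a different moment. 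Your paragraph explicitly checking the hypotheses of Theorem~\ref{theorem-3-5} for the Hessian operator --- concavity from G\r{a}rding, superadditivity from concavity plus $1$-homogeneity giving $F(\chi+\hat\chi)\geq F(\hat\chi)$, and Maclaurin giving $F(\hat\chi)\geq(\hat\chi^n/\omega^n)^{1/n}$ with $c_1=1$ --- is a useful addition that the paper leaves implicit, and it is correct. One small caution: your opening assertion that ``$e^f\in C(M)$ is bounded away from $0$ on the compact $M$'' is not a consequence of continuity alone, and the paper deliberately circumvents this by mollifying $e^{mf}+2^{-i}$ rather than $e^{mf}$ itself so that the approximants are uniformly bounded below by roughly $2^{-i}$ regardless of whether $e^{mf}$ touches zero; however, this does not affect the validity of your scheme, since the only thing actually needed is that each $\tilde g_i$ be smooth and strictly positive, which your construction can always arrange by adding a vanishing small constant before mollifying.
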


\begin{proof}

By convolution and partition of unity, we can find a positive smooth approximation $e^{ m f_i}$ of $e^{m f} + \frac{1}{2^i}$ with $\Vert e^{m f_i} - e^{m f} - \frac{1}{2^i}\Vert_{L^\infty} < \frac{1}{2^{i}}$. We can find a smooth solution to
\begin{equation*}
	(\chi + \delta\omega + \sqrt{-1} \partial\bar\partial \varphi_i)^m \wedge \omega^{n - m} = e^{m b_i} e^{m f_i} \omega^n ,
\end{equation*}
where
\begin{equation*}
	\int_M (\chi + \delta\omega)^m \wedge \omega^{n - m} = e^{m b_i}\int_M e^{m f_i} \omega^n .
\end{equation*}
By comparing the integrals over $M$, we have
%\begin{equation}
% e^{m f} <	e^{m f_i} < e^{m f} + \frac{1}{2^{i - 1}}
%\end{equation}
%\begin{equation}
% \int_M e^{m f} \omega^n \int_M \omega^n < \int_M	e^{m f_i} \omega^n < \int_M e^{m f} \omega^n + \frac{1}{2^{i - 1}} \int_M \omega^n
%\end{equation}
%\begin{equation}
% \int_M e^{m f} \omega^n  < e^{m b_i} \int_M	e^{m f} \omega^n < \int_M e^{m f} \omega^n + \frac{1}{2^{i - 1}} \int_M \omega^n
%\end{equation}
\begin{equation*}
 1  < e^{m b_i}   < 1 + \frac{1}{2^{i - 1}} \frac{\int_M \omega^n}{\int_M (\chi + \delta \omega)^m \wedge \omega^{n - m}} ,
\end{equation*}
and hence
\begin{equation*}
\begin{aligned}
	&\quad \left\Vert e^{mb_i} e^{mf_i} - e^{mf} \right\Vert_{L^\infty} \\
	&\leq 
	e^{m b_i} \left\Vert e^{mf_i} - e^{mf}\right\Vert_{L^\infty} + \left(e^{mb_i} - 1\right) \left\Vert e^{mf}\right\Vert_{L^\infty} \\
%	&\leq
%	\frac{e^{m b_i}}{2^{i}}  + e^{m b_i}\left\Vert e^{mf_i} - e^{mf} - \frac{1}{2^i}\right\Vert_{L^\infty}  + \left(e^{mb_i} - 1\right) \left\Vert e^{mf}\right\Vert_{L^\infty} \\
	&\leq \frac{1}{2^{i - 1}} \left(1 + \frac{1}{2^{i - 1}} \frac{\int_M \omega^n}{\int_M (\chi + \delta \omega)^m \wedge \omega^{n - m}}\right) +  \frac{\left\Vert e^{mf}\right\Vert_{L^\infty}}{2^{i - 1}} \frac{\int_M \omega^n}{\int_M (\chi + \delta \omega)^m \wedge \omega^{n - m}} ,
\end{aligned}
\end{equation*}
which implies that $\{e^{b_i} e^{f_i}\}$ is also uniformly convergent to $e^f$. So we are able to construct a viscosity solution $\varphi$ to complex Hessian equation~\eqref{equation-3-56} according to Theorem~\ref{theorem-3-5}.

\end{proof}

\begin{remark}
A viscosity solution from Theorem~\ref{theorem-3-7} is also a weak solution in pluripotential sense, by realizing the facts that we can construct a decreasing function sequence from a uniformly convergent function sequence and that the pluripotential solution theory is well defined for complex Hessian equations when $\chi$ and $\omega$ are both closed~\cite{Blocki2005}\cite{DinewKolodziej2014}. 
\end{remark}

On Hermitian manifolds, we have to weaken the result. 
\begin{theorem}
Let $\chi \in \bar\Gamma^m_\omega$ be a real $(1,1)$-form on a compact Hermitian manifold $(M,\omega)$ without boundary.  
If $f \in C(M)$, 
then the complex Hessian equation
\begin{equation}
\label{equation-3-59}
(\chi + \delta \omega + \sqrt{-1} \partial\bar\partial \varphi)^m \wedge \omega^{n - m} = e^{m b} e^{mf} \omega^n, \quad \sup_M \varphi =0
\end{equation}
 admits a  viscosity solution pair $(\varphi, b)$.
\end{theorem}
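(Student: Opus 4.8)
The plan is to run, on the Hermitian background, the same two–step scheme that lies behind Theorems~\ref{theorem-3-5} and~\ref{theorem-3-7}: smooth the data, invoke a smooth existence theorem, and pass to a uniform limit through the stability and compactness estimates of Section~3. Two points need genuine attention. First, since $\omega$ (and here $\chi$) need not be closed, the normalization $\int_M(\chi+\delta\omega)^m\wedge\omega^{n-m}=\int_Me^{mf}\omega^n$ used in Theorem~\ref{theorem-3-7} is unavailable, which is exactly why the statement carries an \emph{unknown} constant $b$. Second, the $L^\infty$ estimate of Subsection~\ref{subsection-3-1} and the stability estimate of Theorem~\ref{theorem-3-3} were proved by purely local arguments (solving auxiliary Dirichlet problems in Euclidean balls) that never used closedness, so they transcribe directly; only the integration-by-parts in Lemma~\ref{lemma-3-4} must be redone. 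Recall also that the complex Hessian operator $F=(S_m/C^m_n)^{1/m}$ is concave on $\Gamma^m$ by G\r{a}rding's inequality, is monotone, and satisfies condition~\eqref{condition-3-20} with $c_1=1$ by Maclaurin's inequality; since $\chi\in\bar\Gamma^m_\omega$ and $\delta\omega$ raises the $\omega$-eigenvalues by $\delta$, we have $\chi+\delta\omega\in\Gamma^m_\omega$, so~\eqref{equation-3-59} is a non-degenerate instance of~\eqref{equation-3-1}.

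Next I would approximate and solve. By convolution and a partition of unity pick positive $e^{mf_i}\in C^\infty(M)$ with $\|e^{mf_i}-e^{mf}\|_{L^\infty}\to0$, and invoke the smooth solvability of complex Hessian equations on compact Hermitian manifolds~\cite{HouMaWu2010,DinewKolodziej2017,Sun2017u,Zhang2017,Szekelyhidi}, in which the missing cohomological constraint is replaced by an unknown normalizing constant: for each $i$ there is a pair $(\varphi_i,b_i)$ with $\varphi_i\in C^\infty(M)$, $\chi+\delta\omega+\sqrt{-1}\partial\bar\partial\varphi_i\in\Gamma^m_\omega$, $\sup_M\varphi_i=0$, solving
\[
(\chi+\delta\omega+\sqrt{-1}\partial\bar\partial\varphi_i)^m\wedge\omega^{n-m}=e^{mb_i}e^{mf_i}\omega^n .
\]
Integrating this identity over $M$ and expanding the left-hand side, every term carrying a factor $\sqrt{-1}\partial\bar\partial\varphi_i$ can, via Stokes, be bounded in terms of $\|\varphi_i\|_{L^\infty}$ and fixed torsion/curvature data of $(M,\omega)$, while $\Gamma^m_\omega$-admissibility keeps the leading contribution positive; this yields a two–sided bound $0<c\le e^{mb_i}\le C$ independent of $i$. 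Hence $\{e^{m(b_i+f_i)}\}$ is bounded in $L^\infty$ with $|b_i+f_i|$ uniformly bounded, so, using~\eqref{condition-3-20} to extract the structure condition~\eqref{structure-condition-1} and then Theorem~\ref{theorem-3-1}, we obtain $\|\varphi_i\|_{L^\infty}\le C$ uniformly.

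To upgrade convergence I would first re-prove Lemma~\ref{lemma-3-4}: after integration by parts on a Hermitian manifold, $\int_M-\varphi\,\sqrt{-1}\partial\bar\partial\varphi\wedge\omega^{n-1}$ equals $\|\nabla\varphi\|_{L^2}^2$ plus an error $\int_M\varphi\,\sqrt{-1}\bar\partial\varphi\wedge\partial(\omega^{n-1})$ controlled by $C\|\varphi\|_{L^\infty}\|\nabla\varphi\|_{L^2}$, and likewise $\int_M(\chi+\sqrt{-1}\partial\bar\partial\varphi)\wedge\omega^{n-1}\le C(1+\|\varphi\|_{L^\infty})$; absorbing the error still gives a uniform $W^{1,2}$ bound on the $L^\infty$-bounded admissible set, hence precompactness in every $L^{q'}$, $1\le q'<\infty$. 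Pass to a subsequence with $\varphi_i\to\varphi$ in $L^1$ and $b_i\to b$. Applying Theorem~\ref{theorem-3-3} to $(\varphi_i,\varphi_j)$ with right-hand side $e^{b_i+f_i}$ (whose $L^q$-norm is uniformly bounded, the constant depending only on the uniform $\|\varphi_i\|_{L^\infty}$) gives, with $q'=1$ and $\epsilon=1$,
\[
\|\varphi_i-\varphi_j\|_{L^\infty}\le C\|\varphi_i-\varphi_j\|_{L^1}^{\frac{1}{nq^*+2}} ,
\]
so a further subsequence converges uniformly to $\varphi\in C(M)$. Finally the touching-from-above and touching-from-below arguments of Theorem~\ref{theorem-3-5} go through verbatim — using the uniform convergence of $\varphi_i$ and of $e^{mb_i}e^{mf_i}\to e^{mb}e^{mf}$, and the Lipschitz dependence of eigenvalues — to show that $(\varphi,b)$ is at once a viscosity subsolution and supersolution of~\eqref{equation-3-59}.

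The hard part is not the viscosity machinery, which is insensitive to the Hermitian setting, but the a priori input: extracting the uniform two–sided bound on the normalizing constant $e^{mb_i}$ without a cohomology argument — this is what replaces the balance hypothesis of Theorem~\ref{theorem-3-7} — and verifying that the non-closedness of $\omega$ and $\chi$ contributes to Lemma~\ref{lemma-3-4} (and, on inspection, to the Dirichlet comparison underlying Theorem~\ref{theorem-3-1}) only lower-order, absorbable terms. I expect the constant $b$ to demand the most care, noting that uniqueness of $b$ is not asserted here — only the existence of \emph{some} limiting pair $(\varphi,b)$ is required.
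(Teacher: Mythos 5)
The main gap is in your method for bounding the normalizing constants $b_i$. You propose to integrate the equation over $M$, expand $(\chi+\delta\omega+\sqrt{-1}\partial\bar\partial\varphi_i)^m\wedge\omega^{n-m}$, and bound every cross term via Stokes ``in terms of $\|\varphi_i\|_{L^\infty}$ and torsion data,'' then feed the resulting bound on $b_i$ into Theorem~\ref{theorem-3-1} to get $\|\varphi_i\|_{L^\infty}\le C$. That is circular: the bound on $b_i$ is claimed to depend on $\|\varphi_i\|_{L^\infty}$, but $\|\varphi_i\|_{L^\infty}$ is only obtained afterwards from the bound on $b_i$. Beyond the circularity, the cross terms
\[
\int_M(\chi+\delta\omega)^{m-k}\wedge\bigl(\sqrt{-1}\partial\bar\partial\varphi_i\bigr)^{k}\wedge\omega^{n-m},\qquad 2\le k\le m,
\]
are quadratic and higher in $\partial\bar\partial\varphi_i$, and on a Hermitian manifold Stokes' lemma leaves torsion terms that still carry derivatives of $\varphi_i$; they are not absorbable into $\|\varphi_i\|_{L^\infty}$ alone, and the lower bound $e^{mb_i}\ge c>0$ would in any case require the error to be small compared with $\int_M(\chi+\delta\omega)^m\wedge\omega^{n-m}$, which cannot be arranged a priori.

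The paper avoids all of this with a pointwise maximum-principle comparison requiring no estimate on $\varphi_i$ whatsoever. Define $g$ by $(\chi+\delta\omega)^m\wedge\omega^{n-m}=e^{mg}\omega^n$. At a point where $\varphi_i$ attains its maximum, $\sqrt{-1}\partial\bar\partial\varphi_i\le0$, so ellipticity on $\Gamma^m_\omega$ gives $e^{m(b_i+f_i)}\le e^{mg}$ there, whence $b_i\le\max_M(g-f_i)$; the minimum gives $b_i\ge\min_M(g-f_i)$. Since $\|f_i-f\|_{L^\infty}<2^{-i}$, this yields $\min_M(g-f)-2^{-i}\le b_i\le\max_M(g-f)+2^{-i}$, a uniform two-sided bound with no dependence on $\varphi_i$. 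After passing to a subsequence $b_i\to b$, one has $\|e^{b_i}e^{f_i}-e^{b}e^{f}\|_{L^\infty}\to0$ and Theorem~\ref{theorem-3-5} applies directly. You should replace your integral argument for $b_i$ by this comparison; your remaining steps (Lemma~\ref{lemma-3-4} adaptation, stability, touching argument) then go through as you describe.
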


\begin{proof}

We denote by
\begin{equation*}
	(\chi + \delta \omega)^m \wedge \omega^{n - m} = e^{m g} \omega^n .
\end{equation*}
%By comparison, 
%\begin{equation}
%	\min_M g \leq b + f \leq \max_M g ,
%\end{equation}
%and thus
%\begin{equation}
%	\min_M (g - f) \leq b \leq \max_M (g - f) .
%\end{equation}
There is a smooth approximation $f_i$ of $f$ with $\Vert f_i - f\Vert_{L^\infty} < \frac{1}{2^i}$. We can find a smooth solution pair $(\varphi_i,b_i)$ to
\begin{equation}
	(\chi + \delta \omega + \sqrt{-1} \partial\bar\partial \varphi_i)^m \wedge \omega^{n - m} = e^{m b_i} e^{m f_i} \omega^n
\end{equation}
By comparison,
\begin{equation}	
	\min_M (g - f) - \frac{1}{2^i} < \min_M (g - f_i) \leq b_i \leq \max_M (g - f_i) < \max_M (g - f) + \frac{1}{2^i} .
\end{equation}
By passing to a subsequence, $b_i$ is convergent to a constant $b$. As a result,
\begin{equation}
%\begin{aligned}
	\Vert b_i + f_i - b - f\Vert_{L^\infty}
	\leq |b_i - b| + \Vert   f_i   - f\Vert_{L^\infty} 
	\to 0 
	\qquad \text{ as } i \to \infty ,
%\end{aligned}
\end{equation}
which implies that $\left\{e^{b_i} e^{f_i}\right\}$ is uniformly convergent to $e^b e^f$. So there is a viscosity solution pair $(\varphi,b)$ to complex Hessian equation~\eqref{equation-3-59} by Theorem~\ref{theorem-3-5}.

\end{proof}

\medskip

\subsection{Pluripotential solution to complex Hessian equations on K\"ahler manifolds}

 By the $L^\infty$ estimate, we can improve the exponent in strong stability estimate of Hessian equations in \cite{SuiSun2021}. 
 To deal with the big case,  the exponent has to satisfy $q > \frac{n}{m}$ in \cite{SuiSun2021}. 
 We reformulate the strong stability estimate  as follows.
 \begin{theorem}
 \label{theorem-3-10}
 Let $\chi \in \bar\Gamma^m_\omega$ be a closed real $(1,1)$-form on a compact K\"ahler manifold $(M,\omega)$ without boundary.
 For Hessian equations
\begin{equation*}
 (\chi  + \delta\omega + \sqrt{-1}  \partial\bar\partial \varphi_1)^m \wedge \omega^{n - m} =    e^{m f_1} \omega^{n}, 
\end{equation*}
 and
\begin{equation*}
 (\chi  + \delta\omega + \sqrt{-1}  \partial\bar\partial \varphi_2)^m \wedge \omega^{n - m} =   e^{m f_2} \omega^{n } 
\end{equation*}
 where
\begin{equation*}
 \int_M e^{m f_1} \omega^n =\int_M e^{m f_2} \omega^n = \int_M (\chi + \delta \omega)^m \wedge \omega^{n - m} .
\end{equation*}
% We normalize $\varphi_1$ and $\varphi_2$ such that
%\begin{equation*}
% \sup_M (\varphi_1 - \varphi_2) = \sup_M (\varphi_2 - \varphi_1) .
%\end{equation*}
 Suppose that for $q > 1$, there is a constant $K > 0$ such that
\begin{equation*}
 \Vert e^{n f_1} \Vert_{L^q} \leq K, \; \Vert e^{n f_2} \Vert_{L^q} \leq K.
\end{equation*}
 Let $\gamma (r)$ be a positive function with $\gamma (r) \to 0 $ as  $r \to 0$. 
 Then there is a constant  $C > 0$ depending on $n, m , \omega , \chi, \tilde\chi, K$ and $\gamma$ such that %for any $r < r_0$, we have
\begin{equation*}
	\inf_{t\in\mathbb{R}}\sup_M |\varphi_1 - \varphi_2 - t| \leq Cr ,
\end{equation*}
 whenever $\Vert e^{m f_1} - e^{ m f_2}\Vert_{L^1} < \gamma(r) r^{ \frac{(n + 1)( n q - m)}{n (q - 1)} + 2} $.

 \end{theorem}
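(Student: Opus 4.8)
The plan is to reduce the two-sided quantity $\inf_t\sup_M|\varphi_1-\varphi_2-t|$ to a one-sided bound, then apply the stability estimate of Theorem~\ref{theorem-3-3} to pass from $\sup_M(\varphi_2-\varphi_1)$ to an $L^{q'}$-norm of $(\varphi_2-\varphi_1)^{+}$, and finally estimate that $L^{q'}$-norm by a power of $\|e^{mf_1}-e^{mf_2}\|_{L^1}$ by rerunning the barrier argument of Theorem~\ref{theorem-3-1} on the superlevel sets of $\varphi_2-\varphi_1$. For the reduction: for any continuous $g$ on $M$ one has $\inf_{t\in\mathbb R}\sup_M|g-t|=\tfrac12(\sup_M g-\inf_M g)=\tfrac12\big(\sup_M g+\sup_M(-g)\big)$. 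Taking $g=\varphi_1-\varphi_2$, using that the two equations enter the hypotheses symmetrically, and normalising $\sup_M\varphi_i=0$ (which does not affect the conclusion), it suffices to show that $L:=\|e^{mf_1}-e^{mf_2}\|_{L^1}<\gamma(r)\,r^{E+2}$ implies $\sup_M(\varphi_2-\varphi_1)\le Cr$, where $E:=\tfrac{(n+1)(nq-m)}{n(q-1)}$.

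Next I would check that Theorem~\ref{theorem-3-3} applies to the complex Hessian operator $F(\alpha)=\big(S_m(\alpha)/C^m_n\big)^{1/m}$: it is concave and $1$-homogeneous by G\r{a}rding's inequality, it is monotone along positive directions on $\Gamma^m$ since $\partial S_m/\partial\lambda_i=S_{m-1;i}\ge 0$ there, and Maclaurin's inequality gives $F(\hat\chi)\ge(\hat\chi^n/\omega^n)^{1/n}$ whenever $\hat\chi\ge 0$, so condition~\eqref{condition-3-20} holds with $c_1=1$. Both $\varphi_1$ and $\varphi_2$ satisfy the remaining hypotheses (they solve the Hessian equation with $\sup=0$ and nonnegative right-hand side, and $\|e^{nf_1}\|_{L^q}\le K$), so for all $q'>0$ and $\epsilon>0$,
\begin{equation*}
\sup_M(\varphi_2-\varphi_1)\le C\,\big\|(\varphi_2-\varphi_1)^{+}\big\|_{L^{q'}}^{\frac{q'}{nq^{*}+q'+\epsilon}} .
\end{equation*}
Interpolating with the uniform bound $\|\varphi_i\|_{L^1}\le C$ furnished by Theorem~\ref{theorem-3-1}, it remains to bound $\|\varphi_1-\varphi_2\|_{L^1}$ (equivalently $\|(\varphi_2-\varphi_1)^{+}\|_{L^{q'}}$ for one convenient $q'$) by a power of $L$.

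For this core estimate, set $\alpha_i=\chi+\delta\omega+\sqrt{-1}\partial\bar\partial\varphi_i\in\Gamma^m_\omega$. From the two equations, $\alpha_2^m\wedge\omega^{n-m}-\alpha_1^m\wedge\omega^{n-m}=(e^{mf_2}-e^{mf_1})\,\omega^n$, so on the superlevel sets $\Omega_s=\{\varphi_2-\varphi_1>\sup_M(\varphi_2-\varphi_1)-s\}$ the discrepancy of the two Hessian measures is pointwise controlled by $|e^{mf_1}-e^{mf_2}|$, and the ``mass'' that drives the iteration is bounded by a power of $L$. Localising near the maximum point of $\varphi_2-\varphi_1$, solving the auxiliary complex Monge--Amp\`ere equation on a coordinate ball exactly as in the proof of Theorem~\ref{theorem-3-1}, invoking the Orlicz inequality of~\cite{Kolodziej,WangWnagZhou2020} together with the $L^q$-integrability of $e^{nf_i}$ — here only $q>1$ is needed, the improvement over the condition $q>n/m$ of~\cite{SuiSun2021} being available precisely because the a priori bound $\|\varphi_i\|_{L^\infty}\le C$ is already in hand from Theorem~\ref{theorem-3-1} — and closing with Lemma~\ref{lemma-3-2} (or Lemma~\ref{lemma-4-1}) should yield $\|(\varphi_2-\varphi_1)^{+}\|_{L^{q'}}\le C\,L^{\beta}$, the exponent $\beta=\beta(n,m,q)$ being read off from the H\"older and generalised Young inequalities along the way.

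Substituting into the previous display gives $\sup_M(\varphi_2-\varphi_1)\le C\,L^{\beta q'/(nq^{*}+q'+\epsilon)}$; choosing the free parameter $q'$ optimally and letting $\epsilon\downarrow0$ makes the exponent equal $(E+2)^{-1}$, and the vanishing prefactor $\gamma(r)$ in the hypothesis absorbs the constant $C$ together with the lower-order terms depending on $\|\varphi_i\|_{L^\infty}$, whence $\sup_M(\varphi_2-\varphi_1)\le Cr$; the symmetric estimate for $\sup_M(\varphi_1-\varphi_2)$ then finishes the proof. The step I expect to be the main obstacle is the core estimate: since $\chi+\delta\omega+\sqrt{-1}\partial\bar\partial\varphi_i$ is only $m$-$\omega$-positive — its eigenvalue vector lies in $\Gamma^m$, not in the positive cone — one cannot integrate by parts against $(\varphi_2-\varphi_1)^{+}$ and keep a sign, so everything must be pushed through the auxiliary Monge--Amp\`ere comparison, which needs only Maclaurin's inequality at the level of volume forms; and extracting the sharp exponent $\beta$, hence the precise power of $r$, requires careful bookkeeping of the intermediate exponents.
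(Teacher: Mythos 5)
Your overall architecture is sound and matches the spirit of what the paper intends (the paper in fact gives \emph{no proof at all} for Theorem~\ref{theorem-3-10}; it is imported from~\cite{SuiSun2021}, with the remark that the new $L^\infty$ estimate of Theorem~\ref{theorem-3-1} lets one replace $q>\frac{n}{m}$ by $q>1$). Your reduction of $\inf_t\sup_M|\varphi_1-\varphi_2-t|$ to the two one-sided quantities, and the invocation of Theorem~\ref{theorem-3-3} with $c_1=1$ (via Maclaurin) to pass from $\sup_M(\varphi_2-\varphi_1)$ to $\|(\varphi_2-\varphi_1)^+\|_{L^{q'}}$, are both correct and are exactly how the one-sided stability estimate is meant to be used. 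But the load-bearing step --- showing $\|(\varphi_2-\varphi_1)^+\|_{L^{q'}}\le C\,L^\beta$ with $L:=\|e^{mf_1}-e^{mf_2}\|_{L^1}$ --- is only asserted, not proved, and the mechanism you sketch for it does not go through as stated.

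Two concrete problems. First, you propose to ``rerun the barrier argument of Theorem~\ref{theorem-3-1} on the superlevel sets of $\varphi_2-\varphi_1$'' with the discrepancy $|e^{mf_1}-e^{mf_2}|$ driving the iteration. But Theorem~\ref{theorem-3-1}'s argument works because the auxiliary Monge--Amp\`ere comparison produces a pointwise inequality $(\delta\omega+\sqrt{-1}\partial\bar\partial\varphi)^n\gtrsim(-\psi_{s,k})^{-n/(n+1)}\cdot\tau_k(-u_s)/A_{s,k}$ at the max of the test function, which is then closed up against the given equation $F(\chi+\delta\omega+\sqrt{-1}\partial\bar\partial\varphi)=\psi$. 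The difference $\varphi_2-\varphi_1$ does not satisfy a Hessian equation whose density is the discrepancy: the identity $\alpha_2^m\wedge\omega^{n-m}-\alpha_1^m\wedge\omega^{n-m}=(e^{mf_2}-e^{mf_1})\omega^n$ is a statement about \emph{signed measures}, not about any Monge--Amp\`ere density of $\varphi_2-\varphi_1$, so there is no pointwise positivity to compare against the auxiliary solution, and the quantity $A_s$ has no clear analogue. Second, you diagnose the obstacle as ``one cannot integrate by parts against $(\varphi_2-\varphi_1)^+$ and keep a sign'' because the $\alpha_i$ lie only in $\Gamma^m_\omega$ --- this is not correct on a K\"ahler manifold with $\chi$ closed. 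The polarization $\alpha_2^m-\alpha_1^m=\sqrt{-1}\partial\bar\partial(\varphi_2-\varphi_1)\wedge\Sigma$ with $\Sigma=\sum_{j=0}^{m-1}\alpha_1^j\wedge\alpha_2^{m-1-j}$ does keep a sign: $\Sigma\wedge\sqrt{-1}\partial u\wedge\bar\partial u\wedge\omega^{n-m}\ge 0$ for $\alpha_i\in\Gamma^m_\omega$ is exactly the positivity of mixed $m$-Hessian measures from G\aa rding's theory and is the foundation of the Blocki/Dinew--Kolodziej pluripotential framework being used throughout the paper. The real difficulty is elsewhere: after integrating by parts one gets $\int\sqrt{-1}\partial(\varphi_2-\varphi_1)^+\wedge\bar\partial(\varphi_2-\varphi_1)^+\wedge\Sigma\wedge\omega^{n-m}\le\|(\varphi_2-\varphi_1)^+\|_{L^\infty}\,L$, but $\Sigma\wedge\omega^{n-m}$ has no uniform lower bound in terms of $\omega^{n-1}$ (the $\alpha_i$ can degenerate in some directions), so this does not directly give the $W^{1,2}$ or $L^1$ control you need. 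That is the genuine content of the Sui--Sun estimate that your sketch elides. Until you either import that argument explicitly, or supply a replacement for the missing lower bound, the chain $L\to\|(\varphi_2-\varphi_1)^+\|_{L^{q'}}\to\sup_M(\varphi_2-\varphi_1)$ is broken in its middle link.
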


%\begin{remark}
By the stability estimate above, we can find continuous pluripotential solution to complex Hessian equations when the right-sided terms belong to $L^p$ $\left(p > \frac{n}{m}\right)$.
%\end{remark}
\begin{theorem}
\label{theorem-3-11}
Let $\chi \in \bar\Gamma^m_\omega$ be a closed real $(1,1)$-form on a compact K\"ahler manifold $(M,\omega)$ without boundary.
For $q > 1$ and $e^{nf} \in L^q$, there is a  continuous solution in pluripotential sense to 
\begin{equation}
\label{equation-3-63}
(\chi + \delta \omega + \sqrt{-1} \partial\bar\partial\varphi)^m \wedge \omega^{n - m} = e^{mf} \omega^n, \quad \sup_M \varphi = 0.
\end{equation}
\end{theorem}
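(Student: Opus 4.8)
The plan is to construct the continuous pluripotential solution to \eqref{equation-3-63} by approximation, using the strong stability estimate of Theorem~\ref{theorem-3-10} as the engine that upgrades $L^1$-closeness of data to uniform closeness of solutions. First I would fix $q>1$ and $e^{nf}\in L^q$, and produce a sequence $e^{mf_j}$ of smooth positive functions with $e^{nf_j}\to e^{nf}$ in $L^q$ and, crucially, with the normalization $\int_M e^{mf_j}\omega^n = \int_M(\chi+\delta\omega)^m\wedge\omega^{n-m}$ preserved exactly for each $j$ (this is arranged by multiplying a smooth approximant by the appropriate positive constant, which tends to $1$; one checks as in Theorem~\ref{theorem-3-7} that the normalizing constants converge and do not disturb the $L^q$-convergence). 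Since $\chi\in\bar\Gamma^m_\omega$ we have $\chi+\delta\omega\in\Gamma^m_\omega$, so the non-degeneracy hypothesis is in force, and by the cited works \cite{HouMaWu2010}\cite{DinewKolodziej2017}\cite{Sun2017u}\cite{Szekelyhidi}\cite{Zhang2017} each approximate equation
\begin{equation*}
(\chi+\delta\omega+\sqrt{-1}\partial\bar\partial\varphi_j)^m\wedge\omega^{n-m}=e^{mf_j}\omega^n,\qquad \sup_M\varphi_j=0
\end{equation*}
has a smooth admissible solution $\varphi_j$.

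Next I would invoke Theorem~\ref{theorem-3-1} (with the structure condition read off as in the proof of Theorem~\ref{theorem-3-10}, i.e.\ $c_1\left(\hat\chi^n/\omega^n\right)^{1/n}\le F(\chi+\delta\omega+\hat\chi)$ via Maclaurin and G\aa rding, so that $\hat\chi^n\le c_1^{-n}e^{nf_j}\omega^n$) to get a uniform $L^\infty$ bound $\Vert\varphi_j\Vert_{L^\infty}\le C$ independent of $j$, using that the $L^q$-bound on $e^{nf_j}$ controls the $L^1(\log L)^p$-type quantity appearing there uniformly. Then I would apply Theorem~\ref{theorem-3-10} to the pair $(\varphi_i,\varphi_j)$: given $\varepsilon>0$ choose $r$ small; since $e^{mf_j}\to e^{mf}$ in $L^q\subset L^1$, for $i,j$ large we have $\Vert e^{mf_i}-e^{mf_j}\Vert_{L^1}<\gamma(r)\,r^{\frac{(n+1)(nq-m)}{n(q-1)}+2}$, whence $\inf_{t}\sup_M|\varphi_i-\varphi_j-t|\le Cr$. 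Because $\sup_M\varphi_i=\sup_M\varphi_j=0$, the optimal $t$ is within $Cr$ of $0$, so in fact $\sup_M|\varphi_i-\varphi_j|\le C'r$. This shows $\{\varphi_j\}$ is Cauchy in $L^\infty(M)$, hence converges uniformly to some $\varphi\in C(M)$ with $\sup_M\varphi=0$.

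It remains to verify that $\varphi$ is an admissible pluripotential solution. Uniform convergence of the potentials gives that $\chi+\delta\omega+\sqrt{-1}\partial\bar\partial\varphi$ is $m$-$(\chi+\delta\omega,\omega)$-subharmonic in the current sense (the smooth $\varphi_j$ decrease to nothing, but a decreasing-sequence witness is obtained by the standard device $\varphi_j+2^{-j}\sup_{k\ge j}\Vert\varphi_k-\varphi\Vert_{L^\infty}$ or simply by re-extracting a decreasing subsequence after a further smooth regularization, exploiting closedness of $\chi$ and $\omega$ so that local potentials exist and the Definitions apply). For the equation itself, the Bedford--Taylor--type weak continuity of the mixed Monge--Amp\`ere operators along uniformly convergent sequences of bounded $m$-$\omega$-sh functions \cite{Blocki2005}\cite{DinewKolodziej2014} gives $(\chi+\delta\omega+\sqrt{-1}\partial\bar\partial\varphi_j)^m\wedge\omega^{n-m}\to(\chi+\delta\omega+\sqrt{-1}\partial\bar\partial\varphi)^m\wedge\omega^{n-m}$ weakly, while the right-hand sides $e^{mf_j}\omega^n\to e^{mf}\omega^n$ in $L^1$, hence weakly; therefore $(\chi+\delta\omega+\sqrt{-1}\partial\bar\partial\varphi)^m\wedge\omega^{n-m}=e^{mf}\omega^n$ as measures. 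The main obstacle is the second-to-last point: ensuring the stability estimate of Theorem~\ref{theorem-3-10} is genuinely applicable, i.e.\ that the exact mass normalization $\int_M e^{mf_j}\omega^n=\int_M(\chi+\delta\omega)^m\wedge\omega^{n-m}$ can be kept through approximation while still achieving $L^q$-convergence, and that the constant $C$ in Theorem~\ref{theorem-3-10} (depending on $n,m,\omega,\chi,\delta\omega,K,\gamma$) is uniform along the sequence — which it is, precisely because the uniform $L^q$-bound $K$ and a fixed modulus $\gamma$ suffice. Everything else is soft functional analysis plus the already-established weak continuity of the Hessian operator.
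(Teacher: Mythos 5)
Your proposal follows essentially the same route as the paper: preserve the mass normalization through a smooth approximation of the data, get a uniform $L^\infty$ bound, use Theorem~\ref{theorem-3-10} to upgrade $L^1$-closeness of $e^{mf_i}$ and $e^{mf_j}$ to uniform closeness of $\varphi_i$ and $\varphi_j$, pass to a uniform limit $\varphi$, and then produce a decreasing sequence so that the pluripotential definition applies.

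The one concrete slip is the ``standard device'' $\varphi_j + 2^{-j}\sup_{k\ge j}\Vert\varphi_k-\varphi\Vert_{L^\infty}$: this is \emph{not} a decreasing sequence in general, since with $\epsilon_j:=\sup_{k\ge j}\Vert\varphi_k-\varphi\Vert_{L^\infty}$ one only has $\Vert\varphi_j-\varphi_{j+1}\Vert_{L^\infty}\le 2\epsilon_j$ while $2^{-j}\epsilon_j - 2^{-j-1}\epsilon_{j+1}$ is far smaller than $2\epsilon_j$; the shifts decay too fast to absorb the oscillations. The paper sidesteps this by fixing the approximation rate \emph{before} solving, taking $\Vert e^{mf_i}-e^{mf}\Vert_{L^{nq/m}} < 2^{-N(i+1)}$ with $N=\frac{(n+1)(nq-m)}{n(q-1)}+3$, so that Theorem~\ref{theorem-3-10} yields $\Vert\varphi_i-\varphi_j\Vert_{L^\infty}\le C/2^i$ directly, and then $\varphi_i + C/2^{i-1}$ manifestly decreases to $\varphi$. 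Your alternative remedy --- re-extract a subsequence along which $\Vert\varphi_{j_k}-\varphi\Vert_{L^\infty}$ decays geometrically and shift by $2^{-k}$ --- is correct and amounts to the same thing; ``smooth regularization'' is not needed at this step. The identification of the limit measure via the decreasing sequence and the Bedford--Taylor-type convergence from \cite{Blocki2005}\cite{DinewKolodziej2014} matches the paper's one-line conclusion.
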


\begin{proof}

We can find a positive smooth approximation sequence $\left\{e^{m f_i}\right\}$ of $e^{m f}$  with
\begin{equation*}
	\int_M e^{mf_i} \omega^n = \int_M e^{mf} \omega^n
\end{equation*}
and
\begin{equation*}
	\left\Vert e^{m f_i} - e^{m f} \right\Vert_{L^{\frac{nq}{m}}} < \frac{1}{2^{ N ( i + 1)}} ,
\end{equation*}
where $N = \frac{(n + 1) (n q - m)}{n (q - 1)} + 3$.
%We can find a positive smooth approximation $e^{m f_i}$ of $e^{m f} + \frac{1}{2^i}$ with
%$\left\Vert e^{n f_i} - e^{nf} - \frac{1}{2^i}\right\Vert_{L^{\frac{nq}{m}}} < \frac{1}{2^i}$. 
%By normalization,
%\begin{equation}
%	e^{m b_i} \int_M e^{m f_i} \omega^n = \int_M (\chi + \delta\omega)^m \wedge \omega^{n - m} .
%\end{equation}
%By H\"older inequality, we can see that $\int_M e^{m f_i} \omega^n \to \int_M e^{mf} \omega^n$ and thus $b_i \to 0$. Therefore,
%\begin{equation}
%\begin{aligned}
%	\Vert e^{m b_i} e^{m f_i} - e^{mf}\Vert_{L^{\frac{nq}{m}}} 
%	&\leq \Vert e^{m b_i} e^{m f_i} - e^{m b_i} e^{mf} + e^{mb_i} e^{mf} - e^{mf}\Vert_{L^{\frac{nq}{m}}} \\
%	&\leq \Vert e^{m b_i} e^{m f_i} - e^{m b_i} e^{mf}\Vert_{L^{\frac{nq}{m}}} + \Vert e^{mb_i} e^{mf} - e^{mf}\Vert_{L^{\frac{nq}{n}}} \\
%	&\leq e^{m b_i} \Vert  e^{m f_i} -   e^{m f}\Vert_{L^{\frac{nq}{m}}} + \left| e^{m b_i} - 1\right|\Vert e^{mf}\Vert_{L^{\frac{nq}{m}}} \\
%	&\leq  \frac{e^{m b_i}}{2^{i - 1}} + \left| e^{m b_i} - 1\right|\Vert e^{mf}\Vert_{L^{\frac{nq}{m}}} \\
%	&\to 0 .
%\end{aligned}
%\end{equation}
Solving approximation equations
\begin{equation}
	(\chi + \delta \omega + \sqrt{-1} \partial\bar\partial \varphi_i)^m \wedge \omega^{n - m} = e^{m f_i} \omega^n ,\quad \sup_M \varphi_i = 0,
\end{equation}
we have that $\varphi_i$'s are uniformly bounded. 
%we have that $\varphi_i$'s are uniformly bounded by the $L^\infty$ estimate. 
By Theorem~\ref{theorem-3-10}, 
\begin{equation}
	osc (\varphi_i - \varphi_j)
	\leq C \left\Vert  e^{mf_i} - e^{mf_j}\right\Vert^{\frac{1}{N}}_{L^1} 
	\leq C \left\Vert  e^{mf_i} - e^{mf_j}\right\Vert^{\frac{1}{N}}_{L^{\frac{nq}{m}}} < \frac{C}{2^i} .
\end{equation}
%after normalization
%\begin{equation*}
%	\sup_M (\varphi_i - \varphi_j) = \sup_M (\varphi_j - \varphi_i) .
%\end{equation*}
%\begin{equation}
%\begin{aligned}
%	\left\Vert e^{m f_i} - e^{m f_j} \right\Vert_{L^1}
%	&\leq \left\Vert e^{m f_i} - e^{mf_j} \right\Vert_{L^{\frac{nq}{m}}} \left(\int_M \omega^n\right)^{1 - \frac{m}{nq}}
%\end{aligned}
%\end{equation}
%\begin{equation}
%\begin{aligned}
%	\left\Vert e^{m f_i} - e^{m f_j} \right\Vert^{\frac{1}{N}}_{L^1}
%	&\leq \left\Vert e^{m f_i} - e^{mf_j} \right\Vert^{\frac{1}{N}}_{L^{\frac{nq}{m}}} \left(\int_M \omega^n\right)^{\left(1 - \frac{m}{nq}\right) \frac{1}{N}} 
%	\leq \frac{1}{2^{i + 1}} \left(\int_M \omega^n\right)^{\left(1 - \frac{m}{nq}\right) \frac{1}{N}} .
%\end{aligned}
%\end{equation}
Then we have
\begin{equation}
	\varphi_{i + 1} + \frac{C}{2^i} < \varphi_i + \frac{C}{2^{i - 1}}
\end{equation}
and
\begin{equation}
	\varphi_i - \frac{C}{2^{i - 1}} < \varphi_{i + 1} - \frac{C}{2^i} .
\end{equation}
So $\{\varphi_i\}$ is convergent to a function $\varphi$, and 
\begin{equation}
\left|\varphi_i - \varphi \right| \leq \frac{C}{2^i} ,
\end{equation}
which implies that $\varphi$ is continuous. In particular, $\left\{\varphi_i + \frac{C}{2^{i - 1}}\right\}$ is decreasing to $\varphi$, and thus $\varphi$ is a weak solution in pluripotential sense.

\end{proof}

Moreover, the solutions constructed in Theorem~\ref{theorem-3-11} are equicontinuous. We adapt the argument for complex Monge-Amp\`ere equation~\cite{Kolodziej2005} .
\begin{theorem}
Let $\chi \in \bar\Gamma^m_\omega$ be a closed real $(1,1)$-form on a compact K\"ahler manifold $(M,\omega)$ without boundary. 
For $p > \frac{n}{m}$, the set of solutions to complex Hessian equation~\eqref{equation-3-63} with right-handed bounded in $L^{p}$ norm are equicontinuous.
\end{theorem}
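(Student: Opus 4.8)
The plan is to produce a single modulus of continuity valid for the whole family. Fix the bound $A$ on $\Vert e^{mf}\Vert_{L^p}$ (with $p>\tfrac{n}{m}$, equivalently $e^{nf}$ bounded in $L^q$ for $q:=\tfrac{mp}{n}>1$), so that the hypotheses of Theorems~\ref{theorem-3-1},~\ref{theorem-3-3} and~\ref{theorem-3-10} hold with constants depending only on $A$; in particular every solution $\varphi$ of~\eqref{equation-3-63} obeys $\Vert\varphi\Vert_{L^\infty}\le C(A)$ by Theorem~\ref{theorem-3-1}. The second structural input, and the reason for the threshold $p>\tfrac{n}{m}$, is the uniform domination of the right-hand side by the $m$-Hessian capacity $\mathrm{Cap}_m$ of Dinew--Kolodziej \cite{DinewKolodziej2014}: by the volume--capacity comparison,
\[
\int_E e^{mf}\,\omega^n\ \le\ \Vert e^{mf}\Vert_{L^p}\,\mathrm{vol}(E)^{1-\frac1p}\ \le\ F_A\big(\mathrm{Cap}_m(E)\big)\qquad(E\subset M\ \text{Borel}),
\]
where $F_A$ is an increasing function, independent of $f$, with $F_A(0)=0$ — this is exactly the condition under which Kolodziej's pluripotential estimates become uniform over the family.

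Next I fix a solution $\varphi$ and, for small $\rho>0$, form the sup-regularization $\varphi_\rho(x):=\sup\{\varphi(y):d(x,y)\le\rho\}$, read off in a fixed finite atlas of charts in which $\omega$ is uniformly comparable to the Euclidean metric and glued by a partition of unity. As in Demailly's construction, $\varphi_\rho$ is $(\chi+\delta\omega+c_0\rho\,\omega)$-$m$-subharmonic for a constant $c_0$ depending only on the $C^2$-geometry of $(\chi,\omega)$ and the atlas, $\varphi\le\varphi_\rho\le 0$, $\sup_M\varphi_\rho=0$, and $\varphi_\rho\downarrow\varphi$ as $\rho\downarrow 0$; since $\chi+\delta\omega$ lies in the open cone $\Gamma^m_\omega$, one may work with the nearby closed form $\chi+\delta\omega+c_0\rho\,\omega\in\Gamma^m_\omega$, for which $\varphi$ is a subsolution, $(\chi+\delta\omega+c_0\rho\,\omega+\sqrt{-1}\partial\bar\partial\varphi)^m\wedge\omega^{n-m}\ge e^{mf}\omega^n$. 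The key elementary reduction is that $|\varphi(x)-\varphi(y)|\le\sup_M(\varphi_\rho-\varphi)$ whenever $d(x,y)\le\rho$, so the theorem follows once we show $\sup_M(\varphi_\rho-\varphi)\le\eta(\rho)$ with $\eta(\rho)\to 0$ and $\eta$ depending only on $A$ and the fixed geometric data.

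To bound $\sup_M(\varphi_\rho-\varphi)$ I would run Kolodziej's comparison-principle argument \cite{Kolodziej2005}, adapted to $m$-subharmonic functions via \cite{DinewKolodziej2014}, with the two uniform inputs above. First, the $L^1$-displacement is uniformly small: any $(\chi+\delta\omega)$-$m$-subharmonic function satisfies $\int_M|\mathrm{tr}_\omega\sqrt{-1}\partial\bar\partial\varphi|\,\omega^n=2\int_M(\mathrm{tr}_\omega\sqrt{-1}\partial\bar\partial\varphi)^-\omega^n\le 2\int_M\mathrm{tr}_\omega(\chi+\delta\omega)\,\omega^n$, so the total mass of its distributional Laplacian is bounded independently of $\varphi$, whence $\Vert\varphi_\rho-\varphi\Vert_{L^1}\le C\rho$ with $C$ uniform over the family. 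Feeding this into the capacity estimates gives $\mathrm{Cap}_m(\{\varphi_\rho-\varphi>s\})\le\kappa(\rho,s)$ with $\kappa(\rho,s)\to 0$ as $\rho\to 0$ for each fixed $s>0$; inserting the uniform domination $\int_E e^{mf}\omega^n\le F_A(\mathrm{Cap}_m(E))$ into the comparison principle applied to $\varphi$ and $\varphi_\rho$ over the superlevel sets $\{\varphi_\rho>\varphi+s\}$ closes the loop — if $\sup_M(\varphi_\rho-\varphi)$ exceeded a suitable $\eta(\rho)$, the corresponding superlevel set would have to carry a definite amount of the measure $e^{mf}\omega^n$ while having $m$-capacity below the level at which $F_A$ permits this. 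Equivalently, one can argue by contradiction and Arzel\`a--Ascoli: a failure of equicontinuity produces data $e^{mf_j}$ bounded in $L^p$ whose solutions $\varphi_j$ oscillate by a fixed amount at vanishing scales, yet passing to a weak-$L^p$ limit of $e^{mf_j}$, which remains uniformly capacity-dominated, Kolodziej's stability forces $\varphi_j$ to converge uniformly, a contradiction.

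The step I expect to be the main obstacle is precisely this uniform version of Kolodziej's capacity estimate: one must check that the constant in $\Vert\varphi_\rho-\varphi\Vert_{L^1}\le C\rho$, the function $\kappa$, and the function $F_A$ — hence the final modulus $\eta$ — depend only on $A$ and the fixed geometry, and that all of this survives the mild drift of the background form from $\chi+\delta\omega$ to $\chi+\delta\omega+c_0\rho\,\omega$, a point absent from the classical Kähler case where the background is a positive form that can simply be rescaled away. This is where the integrability threshold $p>\tfrac{n}{m}$ is genuinely used: below it the volume--capacity comparison degenerates and the modulus of continuity can blow up along sequences of data concentrating mass at small scales.
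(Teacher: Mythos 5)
Your main route hinges on the sup-regularization $\varphi_\rho$ remaining (approximately) $m$-$\omega$-subharmonic for the nearby form $\chi+\delta\omega+c_0\rho\,\omega$, and this is exactly the step that fails here: for $m<n$ the condition $(\sqrt{-1}\partial\bar\partial\varphi)^k\wedge\omega^{n-k}\ge 0$ depends pointwise on $\omega$, and a sup convolution (or any translation/averaging in a chart) does not preserve the class on a general K\"ahler manifold unless one has additional curvature control. The paper flags precisely this obstruction in the introduction and again at the start of Subsection~3.3 ("we cannot localize the equation to a domain in $\mathbb{C}^n$... e.g.\ regularization and Hartogs Lemma"); the Demailly/Richberg regularization you invoke is a psh-only ($m=n$) statement, and the Dinew--Kolodziej $m$-capacity theory you appeal to is developed for domains in $\mathbb{C}^n$, not for the compact manifold setting without a curvature hypothesis (where the paper reserves regularity claims for the nonnegative bisectional curvature case in Section~6). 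So the displayed claim about $\varphi_\rho$, and the subsequent capacity-comparison argument, cannot be carried out in the stated generality.

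The afterthought at the end of your proposal is much closer to what the paper actually does, and I would turn that into the main argument: first reduce to smooth right-hand side by noting (from the proof of Theorem~\ref{theorem-3-11}) that the solution is a uniform limit of solutions with smooth data; then argue by contradiction, choosing $(\varphi_i,z_i,w_i)$ with $d(z_i,w_i)<1/i$ and $\varphi_i(z_i)-\varphi_i(w_i)>\epsilon$; use the uniform $L^\infty$ bound (Theorem~\ref{theorem-3-1}) plus the $L^{q^*}$ precompactness (Lemma~\ref{lemma-3-4}) and compactness of $M$ to pass to a subsequence with $\varphi_{i_j}\to\varphi$ in $L^{q^*}$ and $z_{i_j},w_{i_j}\to z$; then apply the paper's own stability estimate (Theorem~\ref{theorem-3-3}, symmetrized as in the proof of Theorem~\ref{theorem-3-5}) to upgrade the $L^{q^*}$ convergence to uniform convergence, so $\varphi$ is continuous, and conclude $|\varphi_{i_j}(z_{i_j})-\varphi_{i_j}(w_{i_j})|\to 0$, a contradiction. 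Two adjustments to your sketch: there is no need to pass to a weak-$L^p$ limit of the data (the stability estimate compares any two solutions whose data are bounded in $L^q$), and one should cite the paper's stability theorem rather than "Kolodziej's stability," which is stated for complex Monge--Amp\`ere and does not directly cover $m$-Hessian equations on manifolds.
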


\begin{proof}

According to the proof of Theorem~\ref{theorem-3-11}, we can see that the weak solution is a limit of uniformly convergent smooth function sequences. So we only need to show that the set of  solutions are equicontinuous when the right-handed terms are smooth and bounded in $L^p$ norm.

Assume that for some $\epsilon > 0$, there is a sequence $(\varphi_i , z_i,w_i)$ so that
\begin{equation*}
d (z_i , w_i) < \frac{1}{i} ,
\end{equation*}
and
\begin{equation*}
\varphi_i (z_i) - \varphi_i (w_i) > \epsilon .
\end{equation*}
When the right-handed is bounded in $L^p$, the solutions are uniformly bounded. By Lemma~\ref{lemma-3-4} and the compactness of $M$, it is possible to choose a subsequence $(\varphi_{i_j} , z_{i_j}, w_{i_j})$ such that there is a point $z \in M$ and a function  $\varphi$ satisfying that as $j$ approaches $+\infty$,
$\varphi_{i,j} \to \varphi $ in $L^{q^*} $, 
$z_{i_j} \to z $ and $w_{i_j} \to z $ .
By Theorem~\ref{theorem-3-5}, $\varphi_{i_j}$ converges uniformly, and thus $\varphi$ is continuous.
Therefore as $j \to + \infty$,
\begin{equation*}
\begin{aligned}
	\left|\varphi_{i_j} (z_{i_j}) - \varphi_{i_j} (w_{i_j})\right| 
%	&= \left| \varphi_{i_j} (z_{i_j}) - \varphi (z_{i_j}) + \varphi (z_{i_j}) - \varphi(z) + \varphi (z) - \varphi (w_{i_j}) + \varphi (w_{i_j}) - \varphi_{i_j} (w_{i_j})\right| \\
	&\leq \left| \varphi_{i_j} (z_{i_j}) - \varphi (z_{i_j}) \right| + \left| \varphi (z_{i_j}) - \varphi(z)  \right| \\
	&\quad + \left|  \varphi (z) - \varphi (w_{i_j})  \right| + \left| \varphi (w_{i_j}) - \varphi_{i_j} (w_{i_j})\right|  \to 0 ,
\end{aligned}
\end{equation*}
which is a contradiction.

\end{proof}

\medskip

\section{Degenerate Case}

In this section, we shall consider a class of approximation equations of Equation~\eqref{equation-1}: for $0 < t \leq 1$,
\begin{equation}
\label{equation-4-2}
 (\chi + \tilde \chi + t\omega + \sqrt{-1} \partial\bar\partial\varphi_t)^m \wedge \omega^{n - m} = e^{m b_t} e^{mf} \omega^n, \qquad \sup_M \varphi_t = 0,
\end{equation}
where 
\begin{equation*}
\int_M (\chi + \tilde \chi + t \omega)^m\wedge \omega^{n - m} = e^{m b_t}\int_M (\chi + \tilde \chi)^m \wedge \omega^{n - m} .
\end{equation*}
Based on the result of approximation equations, a weak solution in pluripotential sense will be constructed from a decreasing function sequence.

We denote $V_t := \int_M (\tilde \chi + t\omega)^n$ and solve
\begin{equation*}
	(\tilde \chi + t\omega + \sqrt{-1} \partial\bar\partial\tilde \varphi)^n = \frac{V_t}{\int_M \omega^n} \omega^n .
\end{equation*}
By Maclaurin's inequality,
\begin{equation*}
\begin{aligned}
 e^{mb_t} \int_M (\chi + \tilde \chi)^m \wedge \omega^{n - m} 
 &\geq 
% \int_M (\tilde \chi + t\omega)^m \wedge \omega^{n - m} \\
% &= 
 	\int_M (\tilde \chi + t\omega + \sqrt{-1} \partial\bar\partial\tilde\varphi)^m \wedge \omega^{n - m} \\
 &\geq
 	\int_M \left(\frac{(\tilde \chi + t\omega +\sqrt{-1}\partial\bar\partial\tilde \varphi)^n}{\omega^n}\right)^{\frac{m}{n}}\omega^n \\
% 	&= \int_M \left( \frac{V_t}{\int_M \omega^n}\right)^{\frac{m}{n}} \omega^n \\
 	&= {V^{\frac{m}{n}}_t}{\left(\int_M \omega^n\right)^{1 - \frac{m}{n}}}
 	,
\end{aligned}
\end{equation*}
and hence
\begin{equation}
\label{inequality-4-3}
	\frac{\int_M \tilde \chi^n}{\left(\int_M (\chi + \tilde \chi + \omega)^m \wedge \omega^{n - m} \right)^{\frac{n}{m}}}
	\leq
	\frac{V_t}{e^{nb_t}} 
	\leq \frac{ \left(\int_M (\chi + \tilde\chi )^m \wedge \omega^{n - m}\right)^{\frac{n}{m}}}{\left(\int_M \omega^n\right)^{\frac{n - m}{m}}} .
\end{equation}

\medskip

\subsection{$L^\infty$ estimate}

We solve the auxiliary equation
\begin{equation}
\label{equation-4-3}
	(\tilde \chi + t\omega + \sqrt{- 1} \partial\bar\partial \psi_{s,k})^n = \frac{\tau_k (- \varphi_t - s)}{A_{s,k}} e^{n b_t} e^{nf} \omega^n , \quad \sup_M \psi_{s,k} = 0 ,
\end{equation}
where 
$M_s := \{ -\varphi_t - s > 0\}$ 
and
\begin{equation}
	A_{s,k} := \frac{1}{V_t} \int_M  \tau_k (- \varphi_t - s) e^{n b_t} e^{nf} \omega^n \to A_s := \frac{1}{V_t} \int_{M_s}  (- \varphi_t - s) e^{n b_t} e^{nf} \omega^n.
\end{equation}

Define
\begin{equation*}
	\Phi := - \left(\frac{n + 1}{n}\right)^{ \frac{n}{n + 1}} A^{\frac{1}{n + 1}}_{s,k} \left(- \psi_{s,k} + \frac{n}{n + 1} A_{s,k}\right)^{\frac{n}{n + 1}} - \varphi_t - s .
\end{equation*}
As in the previous section, we can show that
\begin{equation}
\label{inequality-4-13}
	- \left(\frac{n + 1}{n}\right)^{\frac{n}{n + 1}} A^{\frac{1}{n + 1}}_{s,k} \left(-\psi_{s,k} + \frac{n}{n + 1} A_{s,k}\right)^{\frac{n}{n + 1}} - \varphi_t - s 
	\leq 0
	\quad \text{on } M
	.
\end{equation}
There are constants $\beta > 0$ and $C > 0$ derived from $\alpha$-invariant so that
\begin{equation*}
\label{inequality-4-14}
\begin{aligned}
	\int_{M_s} \exp \left( \beta  \frac{(- \varphi_t - s)^{\frac{n + 1}{n}}}{A^{\frac{1}{n}}_{s,k}}\right) \omega^n 
	&\leq 
	\int_{M_s} \exp \left( \beta  \left( - \frac{n + 1}{n} \psi_{s,k} +  A_{s,k}\right)\right) \omega^n \\
%	&=
%	\exp \left(\beta A_{s,k}\right) \int_{M_s} \exp \left(- \beta\frac{n + 1}{n} \psi_{s,k}\right) \omega^n \\
	&\leq 
	\exp \left(\beta A_{s,k}\right) \int_M \exp \left(- \beta\frac{n + 1}{n} \psi_{s,k}\right) \omega^n \\
	&\leq C \exp \left(\beta A_{s,k}\right) 
	.
\end{aligned}
\end{equation*}
Letting $k \to \infty$,
\begin{equation*}
	\int_{M_s} \exp \left( \beta  \frac{(- \varphi_t - s)^{\frac{n + 1}{n}}}{A^{\frac{1}{n}}_{s}}\right) \omega^n 
	\leq
	C e^{\beta A_s}
%	\leq 
%	C \exp \left(\frac{\beta}{V_t} \int_{M_s} (- \varphi_t) e^{nb_t} e^{nf} \omega^n\right) 
	.
\end{equation*}
By generalized Young's inequality, we have
\begin{equation}
\label{inequality-4-16}
\begin{aligned}
	&\quad \frac{\beta^p}{2^p A^{\frac{p}{n}}_s} \int_{M_s} (-\varphi_t - s)^{\frac{(n + 1) p}{n}} e^{n f} \omega^n \\
	&\leq 
	\int_{M_s} e^{n f} \left(1 + n|f|\right)^p \omega^n + C \int_{M_s} \exp \left( \beta \frac{(- \varphi_t - s)^{\frac{n + 1}{n}}}{A^{\frac{1}{n}}_s} \right) \omega^n \\
	&\leq 
	\int_{M} e^{n f} \left(1 + n|f|\right)^p \omega^n + C e^{\beta A_s} 
	.	
\end{aligned}
\end{equation}
Then by H\"older inequality with respect to measure $e^{nf} \omega^n$,
\begin{equation}
\label{inequality-4-17}
\begin{aligned}
	A_s
%	&= \frac{1}{V_t} \int_{M_s} (-\varphi_t - s) e^{nb_t} e^{nf} \omega^n \\
%	&= \frac{e^{nb_t}}{V_t} \int_{M_s} (-\varphi_t - s) e^{nf} \omega^n \\
	&\leq 
	\frac{e^{nb_t}}{V_t} \left(\int_{M_s} (-\varphi_t - s)^{\frac{(n + 1) p}{n}} e^{nf} \omega^n\right)^{\frac{n}{(n + 1) p}} \left(\int_{M_s} e^{nf} \omega^n \right)^{\frac{(n + 1) p - n}{(n + 1) p}} \\
	&\leq 
	\frac{e^{nb_t}}{V_t} \frac{2^{\frac{n}{n + 1}}}{\beta^{\frac{n}{n + 1}}} A^{\frac{1}{n + 1}}_s \left(\int_{M} e^{nf} (1 + n|f|)^p \omega^n + C e^{\beta A_s} \right)^{\frac{n}{(n + 1) p}} \left(\int_{M_s} e^{nf} \omega^n\right)^{\frac{(n + 1) p - n}{(n + 1) p}}
	.
\end{aligned}
\end{equation}
The last line in \eqref{inequality-4-17} is from \eqref{inequality-4-16}. Rewriting \eqref{inequality-4-17},
\begin{equation*}
\begin{aligned}
	A_s
%	&\leq 
%	\frac{2 e^{(n + 1) b_t}}{\beta V^{\frac{n + 1}{n}}_t} 
%	\left(\int_{M} e^{nf} (1 + n|f|)^p \omega^n + C e^{\beta A_s} \right)^{\frac{1}{ p}} 
%	\left(\int_{M_s} e^{nf} \omega^n\right)^{1 + \frac{1}{n} - \frac{1}{p}} \\
	&\leq 
	\frac{2 e^{(n + 1) b_t}}{\beta V^{\frac{n + 1}{n}}_t} 
	\left(\int_{M} e^{nf} (1 + n|f|)^p \omega^n + C e^{\beta E} \right)^{\frac{1}{ p}} 
	\left(\int_{M_s} e^{nf} \omega^n\right)^{1 + \frac{1}{n} - \frac{1}{p}} 
	,
\end{aligned}
\end{equation*}
where $E$ is a uniform bound of $A_s$ to be specified later. For any $s', s > 0$,
\begin{equation}
\label{inequality-4-19}
\begin{aligned}
	s' \int_{M_{s + s'}} e^{nf} \omega^n
%	&\leq \int_{M_{s + s'}} (-\varphi_t - s) e^{nf} \omega^n \\
	&\leq \int_{M_{s }} (-\varphi_t - s) e^{nf} \omega^n \\
%	&= \frac{V_t}{e^{nb_t}} A_s  \\
	&\leq   \frac{2 e^{  b_t}}{\beta V^{\frac{ 1}{n}}_t} 
		\left(\int_{M} e^{nf} (1 + n|f|)^p \omega^n + C e^{\beta E} \right)^{\frac{1}{ p}} 
		\left(\int_{M_s} e^{nf} \omega^n\right)^{1 + \frac{1}{n} - \frac{1}{p}} 
		.
\end{aligned}
\end{equation}
Inquality~\eqref{inequality-4-19} is followed by the De Giorgi iteration in Lemma~\ref{lemma-4-1}. 
Therefore, we obtain that $	\int_{M_s} e^{nf} \omega^n = 0$, for any $s \geq 0$ with
\begin{equation}
\label{inequality-4-20}
	 s \geq  
%		\frac{2 e^{  b_t}}{\beta V^{\frac{ 1}{n}}_t} 
%			\left(\int_{M} e^{nf} (1 + n|f|)^p \omega^n + C e^{\beta E} \right)^{\frac{1}{ p}}  
%			\left(\int_M e^{nf} \omega^n\right)^{\frac{1}{n} - \frac{1}{p}}
%			2^{\frac{pn + p - n}{p - n}}
%		= 
		\frac{2^{\frac{pn + 2p - 2n}{p - n}}}{\beta} \left(\frac{e^{mb_t}}{V_t}\right)^{\frac{1}{m}} 
			\left(\int_{M} e^{nf} (1 + n|f|)^p \omega^n + C e^{\beta E} \right)^{\frac{1}{ p}}  
			\left(\int_M e^{nf} \omega^n\right)^{\frac{1}{n} - \frac{1}{p}}
			.
\end{equation}

To obtain the uniform $L^\infty$ estimate of $\varphi_t$, we now only need to control  the upper bound $E$ of $A_s$, which will be obtained through  the argument in \cite{GP202207}.
Since
\begin{equation*}
	A_s = \frac{e^{nb_t}}{V_t} \int_{M_s} (-\varphi_t - s) e^{nf} \omega^n \leq \frac{e^{nb_t}}{V_t} \int_M (-\varphi_t) e^{nf} \omega^n ,
\end{equation*}
it suffices to prove that the bound of $ \int_M (-\varphi_t) e^{nf} \omega^n$ is  independent from parameter $t \in (0,1]$. 
From \eqref{inequality-4-13}, on $M_s$
\begin{equation}
\label{inequality-4-22}
\begin{aligned}
	&\quad (-\varphi_t - s)^{\frac{p(n + 1)}{n}}  e^{nf} \\
	&\leq 
	\left( - \frac{n + 1}{n} \psi_{s,k} +  A_{s,k}\right)^p A^{\frac{p}{n}}_{s,k} e^{nf} \\
	&\leq 
	C \left( (-\psi_{s,k})^p + A^p_{s,k}\right) A^{\frac{p}{n}}_{s,k} e^{nf} \\
	&\leq 
	C \left( e^{nf} \left(1 + n |f|\right)^p + \exp \left({- \beta \frac{n + 1}{n}\psi_{s,k}}\right)\right) A^{\frac{p}{n}}_{s,k}  + C  A^{ \frac{p (n + 1) }{n}}_{s,k} e^{nf} 
	.
\end{aligned}
\end{equation}
%where $\beta$ is the $\alpha$-invariant as defined in \eqref{inequality-4-14}. 
The last line in \eqref{inequality-4-22} is due to generalized Young's inequality.
Integrating \eqref{inequality-4-22} over $M_s$,
\begin{equation}
\label{inequality-4-23}
\begin{aligned}
%	&\quad 
	\int_{M_s} 	(-\varphi_t - s)^{\frac{p(n + 1)}{n}}  e^{nf} \omega^n 
%	\\
%	&\leq 
%	C  A^{\frac{p}{n}}_{s,k} \int_{M_s}\left( e^{nf} \left(1 + n |f|\right)^p + \exp \left(- \beta \frac{n + 1}{n}\psi_{s,k} \right)\right) \omega^n + C  A^{ \frac{p (n + 1)}{n}}_{s,k} \int_{M_s} e^{nf}  \omega^n \\
%	&\leq 
%	C A^{\frac{p}{n}}_{s,k} \int_M e^{nf} (1 + n|f|)^p \omega^n + C A^{\frac{p}{n}}_{s,k}  + C A^{ \frac{p (n + 1)}{n}}_{s,k} \int_{M_s} e^{nf}  \omega^n \\
	&\leq C A^{\frac{p}{n}}_{s,k} + C A^{ \frac{p (n + 1)}{n}}_{s,k} \int_{M_s} e^{nf}  \omega^n 
	.
\end{aligned}
\end{equation}
By H\"older inequality with respect to measure $e^{nf} \omega^n$,
\begin{equation}
\label{inequality-4-24}
\begin{aligned}
	\int_{M_s} (- \varphi_t - s) e^{nf} \omega^n
	&\leq 
	\left(\int_{M_s} (-\varphi_t - s)^{\frac{p (n + 1)}{n}} e^{nf} \omega^n\right)^{\frac{n }{p (n + 1)}} \left(\int_{M_s} e^{nf} \omega^n\right)^{1 - \frac{n }{p (n + 1) }} \\
	&\leq 
	C \left( A^{\frac{p}{n}}_{s,k} +  A^{ \frac{p (n + 1)}{n}}_{s,k} \int_{M_s} e^{nf}  \omega^n \right)^{\frac{n }{p (n + 1)}} \left(\int_{M_s} e^{nf} \omega^n\right)^{1 - \frac{n}{p (n + 1) }} \\
%	&=
%	C A^{\frac{1}{n + 1}}_{s,k} \left(1 + A^p_{s,k} \int_{M_s} e^{nf}  \omega^n \right)^{\frac{n }{p (n + 1)}} \left(\int_{M_s} e^{nf} \omega^n\right)^{1 - \frac{n}{p (n + 1) }} \\
%	&\leq 
%	C A^{\frac{1}{n + 1}}_{s,k} \left(\left(1 + \left(A^p_{s,k} \int_{M_s} e^{nf}  \omega^n\right)^{\frac{n }{p (n + 1)}} \right)^{\frac{p (n + 1)}{n }} \right)^{\frac{n }{p (n + 1)}} \left(\int_{M_s} e^{nf} \omega^n\right)^{1 - \frac{n}{p (n + 1) }} \\
%	&\leq 
%	C A^{\frac{1}{n + 1}}_{s,k}  \left(1 + \left(A^p_{s,k} \int_{M_s} e^{nf}  \omega^n\right)^{\frac{n }{p (n + 1)}} \right)  \left(\int_{M_s} e^{nf} \omega^n\right)^{1 - \frac{n}{p (n + 1) }} \\	
%	&= 
%	C A^{\frac{1}{n + 1}}_{s,k}  \left( \left(\int_{M_s} e^{nf} \omega^n\right)^{1 - \frac{n}{p (n + 1) }}  +  A^{\frac{n}{n + 1}}_{s,k}  \int_{M_s} e^{nf}  \omega^n  \right)   \\
	&\leq 
	C   \left(A^{\frac{1}{n + 1}}_{s,k} \left(\int_{M_s} e^{nf} \omega^n\right)^{1 - \frac{n}{p (n + 1) }}  +  A_{s,k}  \int_{M_s} e^{nf}  \omega^n  \right) 
	,
\end{aligned}
\end{equation}
where the second line in \eqref{inequality-4-24} is due to \eqref{inequality-4-23}.
Letting $k \to \infty$,
%\begin{equation}
%\label{inequality-4-25}
%	\int_{M_s} (- \varphi_t - s) e^{nf} \omega^n
%	\leq 
%	C \left(A^{\frac{1}{n + 1}}_s \left(\int_{M_s} e^{nf} \omega^n\right)^{1 - \frac{n}{p (n + 1) }}  +  \frac{e^{nb_t}}{V_t} \int_{M_s} (-\varphi_t - s) e^{nf} \omega^n  \int_{M_s} e^{nf}  \omega^n  \right) 
%	.
%\end{equation}
\begin{equation}
\label{inequality-4-25}
	\int_{M_s} (- \varphi_t - s) e^{nf} \omega^n
	\leq 
	C \left(A^{\frac{1}{n + 1}}_s \left(\int_{M_s} e^{nf} \omega^n\right)^{1 - \frac{n}{p (n + 1) }}  +  A_s \int_{M_s} e^{nf}  \omega^n  \right) 
	.
\end{equation}

We shall need a lemma from Guo-Phong~\cite{GP202207}.
\begin{lemma}
\label{lemma-4-2}
There exists a constant $C_1 > 0$ such that for any $s > 1$,
\begin{equation*}
	\int_{M_s} e^{nf} \omega^n \leq \frac{C_1}{(\ln s)^p} .
\end{equation*}
\end{lemma}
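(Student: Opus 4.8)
The plan is to reduce the estimate to a polynomial bound on the $\omega$-volume of the superlevel sets $M_s=\{-\varphi_t-s>0\}$, combined with the finiteness of the entropy integral $\int_M e^{nf}(1+n|f|)^p\,\omega^n$ (which is part of the standing hypotheses, cf.\ Theorem~\ref{theorem-3-1} and \eqref{inequality-4-16}). First I would observe, exactly as in the proof of Theorem~\ref{theorem-3-1}, that since the eigenvalues of $\chi+\tilde\chi+t\omega+\sqrt{-1}\,\partial\bar\partial\varphi_t$ lie in $\Gamma^m\subset\Gamma^1$, the function $\varphi_t$ is $\omega$-subharmonic up to a bounded term; because the classes $[\chi+\tilde\chi+t\omega]$ stay in a bounded set for $t\in(0,1]$, the $L^1(\omega^n)$-norm $\|\varphi_t\|_{L^1}\le C_2$ is bounded uniformly in $t$ (see \cite{TosattiWeinkove2010}\cite{TosattiWeinkove2013}). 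Chebyshev's inequality then gives $\mathrm{vol}_\omega(M_s)\le\frac1s\int_M(-\varphi_t)\,\omega^n\le C_2/s$ for all $s>0$.

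Next I would split $M_s$ according to the size of $e^{nf}$. For a parameter $\lambda>1$, write $M_s=M_s'\cup M_s''$ with $M_s'=M_s\cap\{e^{nf}\le\lambda\}$ and $M_s''=M_s\cap\{e^{nf}>\lambda\}$. On $M_s'$ one has
\[
\int_{M_s'}e^{nf}\,\omega^n\le\lambda\,\mathrm{vol}_\omega(M_s)\le\frac{\lambda C_2}{s},
\]
while on $M_s''$ one has $nf>\ln\lambda>0$, so $(1+n|f|)^p>(1+\ln\lambda)^p$ and hence
\[
\int_{M_s''}e^{nf}\,\omega^n\le\frac{1}{(1+\ln\lambda)^p}\int_M e^{nf}(1+n|f|)^p\,\omega^n=:\frac{C_3}{(1+\ln\lambda)^p}.
\]
Choosing $\lambda=\sqrt s$ (admissible for $s>1$) and using $1+\ln\lambda\ge\tfrac12\ln s$ yields
\[
\int_{M_s}e^{nf}\,\omega^n\le\frac{C_2}{\sqrt s}+\frac{2^pC_3}{(\ln s)^p}.
\]

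Finally, since $\sqrt s$ grows faster than $(\ln s)^p$, there is $s_1>1$ with $C_2/\sqrt s\le C_2/(\ln s)^p$ for $s\ge s_1$; for such $s$ the asserted inequality holds with $C_1=C_2+2^pC_3$. For $1<s<s_1$ the left-hand side is at most $\int_M e^{nf}\,\omega^n$ while $(\ln s)^{-p}\ge(\ln s_1)^{-p}$, so enlarging $C_1$ to also dominate $(\ln s_1)^p\int_M e^{nf}\,\omega^n$ completes the proof. The one point I expect to require care is the uniformity in $t$ (as $t\to 0^+$) of the bound $\|\varphi_t\|_{L^1}\le C_2$: this is where the K\"ahler hypothesis and the boundedness of the family $\{\chi+\tilde\chi+t\omega:0<t\le1\}$ of $(1,1)$-forms enter, and it is handled precisely as in the non-degenerate case; the remaining level-set splitting is routine, and the resulting estimate is exactly what feeds back into \eqref{inequality-4-25} to bound $A_s$ uniformly.
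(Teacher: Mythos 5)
Your argument is correct, and since the paper does not reprove Lemma~\ref{lemma-4-2} (it is quoted from Guo--Phong~\cite{GP202207}), your self-contained proof is a useful complement. The essential ingredients you use match what is available in the paper's setting: the uniform $L^1$ bound $\Vert\varphi_t\Vert_{L^1}\le C_2$ for $t\in(0,1]$ (justified exactly as at the end of the proof of Theorem~\ref{theorem-3-1}, via $\chi+\tilde\chi+t\omega+\sqrt{-1}\,\partial\bar\partial\varphi_t\in\Gamma^1_\omega$ and $\chi+\tilde\chi+t\omega\le\chi+\tilde\chi+\omega$), Chebyshev to get $\mathrm{vol}_\omega(M_s)\le C_2/s$, and the entropy integral $\int_M e^{nf}(1+n|f|)^p\,\omega^n<\infty$ which is the standing hypothesis of Section~4 (Theorem~\ref{theorem-4-3}). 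The level-set splitting by the size of $e^{nf}$ with threshold $\lambda=\sqrt{s}$ is sound: on $\{e^{nf}\le\lambda\}\cap M_s$ you pay $\lambda\,\mathrm{vol}_\omega(M_s)\le C_2/\sqrt{s}$, and on $\{e^{nf}>\lambda\}\cap M_s$ you pay $C_3/(1+\ln\lambda)^p\le 2^pC_3/(\ln s)^p$; the $C_2/\sqrt{s}$ term is then absorbed into $C_1/(\ln s)^p$ for $s\ge s_1$, and the range $1<s<s_1$ is handled by the trivial bound $\int_{M_s}e^{nf}\omega^n\le\int_M e^{nf}\omega^n$ together with $(\ln s)^p\le(\ln s_1)^p$. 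This is a clean, elementary derivation of the log-decay, and it is in the spirit of the energy/entropy arguments in~\cite{GP202207}; the only point deserving emphasis, which you correctly flag, is that the volume-decay input must be uniform in $t$, and that uniformity comes precisely from the $\Gamma^1$ constraint and the boundedness of $\{\chi+\tilde\chi+t\omega\}_{0<t\le1}$.
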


By Lemma~\ref{lemma-4-2}, we have
\begin{equation}
\label{inequality-4-27}
	\frac{C e^{nb_t}}{V_t} \int_{M_s} e^{nf} \omega^n \leq \frac{C C_1 e^{nb_t}}{V_t (\ln s)^p} < \frac{1}{2},
\end{equation}
if we pick $s\geq s_1 := 1 + \exp\left( \left( \frac{2 C C_1 e^{nb_t}}{V_t} \right)^{\frac{1}{p}} \right)$ where $C$ here is the same as that in \eqref{inequality-4-25}.
Substituting \eqref{inequality-4-27} into \eqref{inequality-4-25},
%\begin{equation}
%	A^{\frac{n}{n + 1}}_s \leq \frac{2 C e^{nb_t}}{V_t}   \left(\int_{M_s} e^{nf} \omega^n\right)^{1 - \frac{n }{p (n + 1)}} .
%\end{equation} 
\begin{equation}
\label{inequality-4-28}
	\int_{M_s} (-\varphi_t - s) e^{nf}\omega^n \leq 2 C\left( \frac{e^{nb_t}}{V_t} \int_{M_s} (-\varphi_t - s) e^{nf} \omega^n  \right)^{\frac{1}{n + 1}} \left(\int_{M_s} e^{nf} \omega^n\right)^{1 - \frac{n }{p (n + 1)}} .
\end{equation}
Rewriting \eqref{inequality-4-28},
\begin{equation}
\label{inequality-4-15-1}
\begin{aligned}
	\int_{M_s} (-\varphi_t - s) e^{nf} \omega^n 
	&\leq C \left(\frac{e^{nb_t}}{V_t}\right)^{\frac{1}{n}}  \left(\int_{M_s} e^{nf} \omega^n\right)^{1 + \frac{1}{n} - \frac{1}{p }}  
%	\\
%	&\leq C  \left(\frac{V_t}{e^{n b_t}}\right)^{1 - \frac{1}{p }} \\
%	&\leq C \frac{ \left(\int_M (\chi + \tilde\chi )^m \wedge \omega^{n - m}\right)^{\frac{n (p - 1)}{m p}}}{\left(\int_M \omega^n\right)^{\frac{(n - m)(p - 1)}{mp}}} \\
%	&=
%	C \frac{\left( e^{mb_0} \int_M e^{mf} \omega^n\right)^{\frac{n (p - 1)}{m p}}}{\left(\int_M \omega^n\right)^{\frac{(n - m)(p - 1)}{mp}}} 
	.
\end{aligned}
\end{equation}
Substituting \eqref{inequality-4-27} and \eqref{inequality-4-3} into \eqref{inequality-4-15-1},
\begin{equation*}
%\begin{aligned}
	\int_{M_s} (-\varphi_t - s) e^{nf} \omega^n 
%	&\leq C \left(\frac{e^{nb_t}}{V_t}\right)^{\frac{1}{n}}  \left(\int_{M_s} e^{nf} \omega^n\right)^{1 + \frac{1}{n} - \frac{1}{p }}  \\
	\leq C  \left(\frac{V_t}{e^{n b_t}}\right)^{1 - \frac{1}{p }} 
	\leq C \frac{ \left(\int_M (\chi + \tilde\chi )^m \wedge \omega^{n - m}\right)^{\frac{n (p - 1)}{m p}}}{\left(\int_M \omega^n\right)^{\frac{(n - m)(p - 1)}{mp}}}  
	.
%\end{aligned}
\end{equation*}
Therefore,
\begin{equation}
\label{inequality-4-30}
\begin{aligned}
	\int_M (-\varphi_t) e^{nf} \omega^n
%	&= \int_M (-\varphi_t - s_1) e^{nf} \omega^n  + s_1 \int_M e^{nf} \omega^n \\
%	&= \int_{M_{s_1}} (-\varphi_t - s_1) e^{nf} \omega^n + \int_{M \setminus M_{s_1}} (-\varphi_t - s_1) e^{nf} \omega^n  + s_1 \int_M e^{nf} \omega^n \\
	&\leq 
	 \int_{M_{s_1}} (-\varphi_t - s_1) e^{nf} \omega^n + s_1\int_{M} e^{nf} \omega^n  \\
	 &\leq 
	C \frac{ \left(\int_M (\chi + \tilde\chi )^m \wedge \omega^{n - m}\right)^{\frac{n (p - 1)}{m p}}}{\left(\int_M \omega^n\right)^{\frac{(n - m)(p - 1)}{mp}}} \\
	&\qquad\qquad
	+ \left(1 + \exp\left( \left( \frac{2 C C_1 e^{nb_t}}{V_t} \right)^{\frac{1}{p}} \right)\right) \int_M e^{nf} \omega^n 
	.
\end{aligned}
\end{equation}

Substituting \eqref{inequality-4-30} into \eqref{inequality-4-20}, we obtain the following $L^\infty$ estimate. 
\begin{theorem}
\label{theorem-4-3}
Suppose that the quantity $\int_M e^{nf} (1 + n|f|)^p$ is bounded for some $p > n$.
Then there is a constant $C$ so that for all $t\in(0,1]$, we have
\begin{equation*}
\Vert\varphi_t\Vert_{L^\infty} < C,
\end{equation*}
for admissible solution $\varphi_t$ to approximation equation~\eqref{equation-4-2}.
\end{theorem}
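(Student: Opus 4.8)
The plan is to assemble the ingredients prepared above into an estimate that is \emph{uniform} in the parameter $t$, following the Guo--Phong strategy for singular Monge--Amp\`ere equations. First I would fix $t\in(0,1]$ and an admissible solution $\varphi_t$ of \eqref{equation-4-2}, and for each $s>0$ introduce the super-level set $M_s=\{-\varphi_t-s>0\}$ together with the auxiliary complex Monge--Amp\`ere equation \eqref{equation-4-3}, whose right-hand side is the truncation $\tau_k(-\varphi_t-s)$ of the density of $\varphi_t$ and whose normalization $A_{s,k}$ converges to $A_s$ as $k\to\infty$. Solvability of \eqref{equation-4-3} is classical.

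Next, comparing $\varphi_t$ with the auxiliary potential $\psi_{s,k}$ via the test function $\Phi$ and running the maximum-principle argument exactly as in the proof of Theorem~\ref{theorem-3-1} gives the pointwise bound \eqref{inequality-4-13}. Feeding this into the uniform ($\alpha$-invariant) exponential integrability estimate on the K\"ahler manifold $(M,\omega)$ produces exponential integrability of $(-\varphi_t-s)^{\frac{n+1}{n}}A_s^{-1/n}$; a generalized Young inequality together with H\"older with respect to $e^{nf}\omega^n$ then yields a superlinear recursion of the form $s'\int_{M_{s+s'}}e^{nf}\omega^n\le C\big(\int_{M_s}e^{nf}\omega^n\big)^{1+\frac1n-\frac1p}$, i.e. \eqref{inequality-4-19}. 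De Giorgi iteration (Lemma~\ref{lemma-4-1}) then forces $\int_{M_s}e^{nf}\omega^n=0$ once $s$ exceeds the explicit threshold \eqref{inequality-4-20}, which bounds $-\varphi_t$ \emph{provided} we control the quantity $E$, a uniform upper bound for $A_s$.

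The heart of the argument, and the step I expect to be the main obstacle, is producing $E$ uniformly as $t\to0$. Since $A_s\le \frac{e^{nb_t}}{V_t}\int_M(-\varphi_t)e^{nf}\omega^n$, it suffices to bound $\int_M(-\varphi_t)e^{nf}\omega^n$ independently of $t$. Here I would invoke Lemma~\ref{lemma-4-2} (the decay $\int_{M_s}e^{nf}\omega^n\le C_1(\ln s)^{-p}$), which lets one absorb the volume factor in \eqref{inequality-4-25} for $s\ge s_1$ and deduce \eqref{inequality-4-15-1}; combined with the two-sided bound \eqref{inequality-4-3} on $V_t/e^{nb_t}$ one then reaches \eqref{inequality-4-30}, a bound on $\int_M(-\varphi_t)e^{nf}\omega^n$ depending only on the fixed cohomological data. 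The bigness of $\tilde\chi$ is essential precisely here: it is what guarantees $\int_M\tilde\chi^n>0$, hence the lower bound in \eqref{inequality-4-3}, and without it the threshold in \eqref{inequality-4-20} cannot be kept uniform in $t$.

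Finally, substituting \eqref{inequality-4-30} into the threshold \eqref{inequality-4-20} yields an explicit constant $C$ independent of $t\in(0,1]$ with $\sup_M(-\varphi_t)<C$; since $\sup_M\varphi_t=0$, this is the desired $\Vert\varphi_t\Vert_{L^\infty}<C$. The regularity of $f$ enters only through the finiteness of $\int_M e^{nf}(1+n|f|)^p$ with $p>n$, used in the Young-inequality steps and in Lemma~\ref{lemma-4-2}, both of which are among the hypotheses.
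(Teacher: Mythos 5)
Your proposal is correct and follows essentially the same route as the paper: super-level sets and the auxiliary Monge--Amp\`ere equation, the maximum-principle comparison yielding the pointwise bound, exponential integrability from the $\alpha$-invariant, generalized Young plus H\"older giving the superlinear recursion, De Giorgi iteration, and then the key step of bounding $E$ uniformly in $t$ via Lemma~\ref{lemma-4-2} combined with the two-sided cohomological bound \eqref{inequality-4-3} (where bigness of $\tilde\chi$ is used precisely as you say). The one minor imprecision is that the test function $\Phi$ in Section~4 is not literally the same as in Theorem~\ref{theorem-3-1}: here the auxiliary equation is solved globally on $M$ with $\sup_M\psi_{s,k}=0$, and $\Phi$ carries an extra shift $\tfrac{n}{n+1}A_{s,k}$ inside the power, so the maximum-principle step must be run globally rather than on a coordinate ball with a $\tfrac{\delta}{2}|z|^2$ term; this changes the bookkeeping but not the idea.
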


%
%
%\newpage
%
%
%
%
%Suppose that $e^f$ is bounded in $L^q$ norm with $q > \frac{n^2}{m}$. It is shown in \cite{SuiSun2021} that there is a uniform constant $C > 0$ such that $ -\varphi_t < C$. 
%
%
%

\medskip
\subsection{Stability estimate}

With the $L^\infty$ estimate in Theorem~\ref{theorem-4-3}, we can improve the coefficient $q$ in the strong stability estimate in \cite{GPT202106}\cite{SuiSun2021}. In \cite{SuiSun2021}, the assumption of $q > \frac{n}{m}$ is only used to derive the $L^\infty$ estimate independent of parameter $t$.

\begin{theorem}
\label{theorem-4-4}

Let $\chi \in \bar\Gamma^m_\omega$ be a closed real $(1,1)$-form, and $\tilde \chi$ be a semipositive and big closed $(1,1)$-form on compact K\"ahler manifolds $(M,\omega)$ without boundary. Assume that there are admissible solutions $\varphi_1$ and $\varphi_2$ respectively to complex Hessian equations
\begin{equation*}
	(\chi + \tilde \chi + t\omega + \sqrt{-1} \partial\bar\partial \varphi_1)^m \wedge \omega^{n - m} = e^{mb_t} e^{mf_1} \omega^n ,
\end{equation*}
and
\begin{equation*}
	(\chi + \tilde \chi + t\omega + \sqrt{-1} \partial\bar\partial \varphi_2)^m \wedge \omega^{n - m} = e^{mb_t} e^{mf_2} \omega^n ,
\end{equation*}
where $f_1$ and $f_2$ are real smooth functions on $M$ with
\begin{equation*}
	\int_M e^{mf_1} \omega^n = \int_M e^{mf_2} \omega^n = \int_M (\chi + \tilde \chi)^m \wedge \omega^{n - m} ,
\end{equation*}
and
\begin{equation*}
	\int_M (\chi + \tilde \chi + t\omega)^m \wedge \omega^{n - m} = e^{mb_t} \int_M (\chi + \tilde \chi)^m \wedge \omega^{n - m} .
\end{equation*}
We normalize $\varphi_1$ and $\varphi_2$ so that
\begin{equation*}
	\sup_M (\varphi_1 - \varphi_2) = \sup_M (\varphi_2 - \varphi_1) .
\end{equation*}
Suppose that for $q > 1$, there is a constant $K > 0$ such that
\begin{equation*}
	\Vert e^{nf_1}\Vert_{L^q} \leq K, \quad \Vert e^{nf_2}\Vert_{L^q} \leq K.
\end{equation*}
Let $\gamma (r)$ be a positive function with $\gamma (r) \to 0 $ as  $r \to 0$. 
 Then there is a constant  $C > 0$ depending on $n, m , \omega , \chi, \tilde\chi, K$ and $\gamma$ such that %for any $r < r_0$, we have
\begin{equation*}
	 \sup_M |\varphi_1 - \varphi_2 | \leq Cr ,
\end{equation*}
 whenever $\Vert e^{m f_1} - e^{ m f_2}\Vert_{L^1} < \gamma(r) r^{ \frac{(n + 1)( n q - m)}{n (q - 1)} + 2} $.

\end{theorem}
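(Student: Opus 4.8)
The plan is to replay the proof of Theorem~\ref{theorem-3-3}, together with its upgrade to data-stability in the manner of Theorem~\ref{theorem-3-10} and \cite{SuiSun2021}, in the degenerate setting; the one genuinely new input is the uniform-in-$t$ estimate of Theorem~\ref{theorem-4-3}, which is what keeps every constant produced below independent of $t\in(0,1]$. By the normalization $\sup_M(\varphi_1-\varphi_2)=\sup_M(\varphi_2-\varphi_1)$ it suffices to bound $\sup_M(\varphi_2-\varphi_1)$, the reverse bound following by interchanging $\varphi_1,\varphi_2$ and using $\Vert e^{nf_2}\Vert_{L^q}\leq K$ in place of $\Vert e^{nf_1}\Vert_{L^q}\leq K$.

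First I would extract the structure condition. Writing $F=S_m^{1/m}$, which is concave and positively homogeneous of degree one on $\Gamma^m$ by G\r{a}rding's inequality, and using that $\varphi_1$ solves its equation while $\varphi_2$ is admissible, the concave interpolation
\[
\chi+\tilde\chi+t\omega+\sqrt{-1}\partial\bar\partial\varphi_1=(1-r)\bigl(\chi+\tilde\chi+t\omega+\sqrt{-1}\partial\bar\partial\varphi_2\bigr)+r\,G_r,\qquad G_r:=\chi+\tilde\chi+t\omega+\tfrac1r\sqrt{-1}\partial\bar\partial\varphi_1-\tfrac{1-r}{r}\sqrt{-1}\partial\bar\partial\varphi_2,
\]
gives, for $0<r<\tfrac12$ and at every point where $G_r\in\bar\Gamma^m_\omega$, the inequality $e^{b_t}e^{f_1}\geq(1-r)e^{b_t}e^{f_2}+r\,S_m^{1/m}(G_r)$, whence by Maclaurin's inequality $G_r^{\,n}\leq C\,r^{-n}e^{nb_t}\bigl(e^{f_1}\bigr)^{n}\,\omega^n$ there. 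This plays the role of~\eqref{structure-condition-1}; crucially it uses only $\chi\in\bar\Gamma^m_\omega$ and not any positivity of $\chi$, and the extra $t\omega$ is harmless.

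Next I would run the scheme of the proof of Theorem~\ref{theorem-4-3}: take $\mathcal Z$ a maximum point of $\tfrac{1-r}{r}\varphi_2-\tfrac1r\varphi_1$ on $M$, set $u_s=\tfrac1r\varphi_1-\tfrac{1-r}{r}\varphi_2-\bigl(\tfrac1r\varphi_1-\tfrac{1-r}{r}\varphi_2\bigr)(\mathcal Z)-s$ and $M_s=\{u_s<0\}$, and solve the auxiliary Monge--Amp\`ere equation $(\tilde\chi+t\omega+\sqrt{-1}\partial\bar\partial\psi_{s,k})^n=\tfrac{\tau_k(-u_s)}{A_{s,k}}\,C\,r^{-n}e^{nb_t}(e^{f_1})^n\,\omega^n$ globally on $M$ in the big class of $\tilde\chi+t\omega$ — the global solvability, rather than localization to a Euclidean ball as in Theorem~\ref{theorem-3-3}, is precisely what the degeneracy of $\tilde\chi$ forces, and the normalization $V_t/e^{nb_t}$ is pinned by~\eqref{inequality-4-3}. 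The barrier $\Phi=-\bigl(\tfrac{n+1}{n}\bigr)^{\frac{n}{n+1}}A_{s,k}^{\frac1{n+1}}\bigl(-\psi_{s,k}+\tfrac{n}{n+1}A_{s,k}\bigr)^{\frac{n}{n+1}}-u_s$ is $\leq0$ on $M$ by the maximum-principle computation leading to~\eqref{inequality-4-13} (if its maximum were interior to $M_s$, then $G_r\in\bar\Gamma^m_\omega$ there, so the structure bound applies). From $-u_s\leq C A_{s,k}^{1/(n+1)}\bigl(-\psi_{s,k}+\tfrac{n}{n+1}A_{s,k}\bigr)^{n/(n+1)}$, the $\alpha$-invariant estimate for $\tilde\chi+t\omega$ (uniform in $t$), generalized Young's inequality, H\"older with respect to $e^{nf_1}\omega^n$, and the iteration Lemma~\ref{lemma-3-2}, I would obtain $\int_{M_{s_0}}e^{nf_1}\omega^n\geq c_0\,r^{1/\delta_0}\Vert e^{nf_1}\Vert_{L^q}^{-(\frac1{n\delta_0}-1)}$ for a fixed $s_0$ and any $\delta_0<\tfrac1n$, with $A_s$ bounded a priori by Theorem~\ref{theorem-4-3} and $c_0$ independent of $t$. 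Feeding this back through H\"older as in~\eqref{inequality-3-45}, after the case distinction on $\varphi_1(\mathcal Z)-(1-r)\varphi_2(\mathcal Z)$ carried out in the proof of Theorem~\ref{theorem-3-3}, yields $\sup_M(\varphi_2-\varphi_1)\leq C\Vert(\varphi_2-\varphi_1)^+\Vert_{L^{q'}}^{\delta_0 q'/(q^*+\delta_0 q')}$ uniformly in $t$, and a suitable choice of the free parameters makes the exponent match $N=\frac{(n+1)(nq-m)}{n(q-1)}+2$.

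Finally, as in Theorem~\ref{theorem-3-10} and \cite{SuiSun2021}, I would close the loop by the comparison principle for the complex Hessian measure on $\{\varphi_1<\varphi_2-\epsilon\}$, which bounds $\Vert(\varphi_1-\varphi_2)^+\Vert_{L^{q'}}$ and the symmetric quantity by a power of $\Vert e^{mf_1}-e^{mf_2}\Vert_{L^1}$; combined with the self-improving estimate above and the normalization, this gives $\sup_M|\varphi_1-\varphi_2|\leq Cr$ whenever $\Vert e^{mf_1}-e^{mf_2}\Vert_{L^1}<\gamma(r)r^{N}$. The main obstacle is purely the $t$-uniformity: one must verify that the $\alpha$-invariant/capacity constants for the big classes $\tilde\chi+t\omega$, the two-sided bound~\eqref{inequality-4-3} on $V_t/e^{nb_t}$, and the a priori bound on $A_s$ coming from Theorem~\ref{theorem-4-3} all survive the limit $t\to0$. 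The G\r{a}rding/concavity step and the comparison step are routine once the positivity of $\chi$ is replaced by the weaker hypothesis $\chi\in\bar\Gamma^m_\omega$.
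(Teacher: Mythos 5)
The paper supplies no standalone argument for this theorem: the paragraph preceding it simply observes that the strong stability proof of \cite{SuiSun2021} and \cite{GPT202106} transfers once the $t$-uniform $L^\infty$ estimate of Theorem~\ref{theorem-4-3} is available, the hypothesis $q>\frac{n}{m}$ in \cite{SuiSun2021} having been used there only to secure that bound. Your plan reproduces that reference chain accurately, and your middle paragraph essentially re-derives the $\sup_M(\varphi_2-\varphi_1)\leq C\Vert(\varphi_2-\varphi_1)^+\Vert^{\alpha}_{L^{q^*}}$ estimate that the paper does prove in detail (the theorem immediately following the present one), so structurally you agree with the intended route.

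The place you should be more careful is the final step. You assert that the comparison principle bounds $\Vert(\varphi_1-\varphi_2)^+\Vert_{L^{q'}}$ directly by a power of $\Vert e^{mf_1}-e^{mf_2}\Vert_{L^1}$; that is not the form of the estimate one obtains. Testing the difference of the two equations against $(\varphi_2-\varphi_1)^+$, writing $X_i=\chi+\tilde\chi+t\omega+\sqrt{-1}\partial\bar\partial\varphi_i$ and $X_\theta=\theta X_2+(1-\theta)X_1$, telescoping and integrating by parts gives
\begin{equation*}
m\int_0^1\int_{\{\varphi_2>\varphi_1\}}\sqrt{-1}\,\partial(\varphi_2-\varphi_1)\wedge\bar\partial(\varphi_2-\varphi_1)\wedge X_\theta^{m-1}\wedge\omega^{n-m}\,d\theta\leq e^{mb_t}\,\sup_M(\varphi_2-\varphi_1)^+\,\Vert e^{mf_1}-e^{mf_2}\Vert_{L^1},
\end{equation*}
so the sup of $\varphi_2-\varphi_1$ reappears on the right and the quantity controlled on the left is a \emph{weighted} Dirichlet energy. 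Converting that into an unweighted $L^{q'}$ bound on $(\varphi_2-\varphi_1)^+$ requires handling the fact that the positive $(n-1,n-1)$-form $X_\theta^{m-1}\wedge\omega^{n-m}$ admits no pointwise lower bound when $\tilde\chi$ is only semipositive and big. That is the genuinely nontrivial content deferred to \cite{SuiSun2021}, it is the reason the conclusion is phrased with the auxiliary function $\gamma$ rather than as a clean power law, and it is \emph{not} made routine simply by replacing positivity of $\chi$ with $\chi\in\bar\Gamma^m_\omega$; the hard replacement is the degeneracy of $\tilde\chi$.
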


However, we need another stability estimate in order to construct a solution to Equation~\eqref{equation-1}. 

\begin{theorem}Let $\chi \in \bar\Gamma^m_\omega$ be a closed real $(1,1)$-form, and $\tilde \chi$ be a semipositive and big closed $(1,1)$-form on compact K\"ahler manifolds $(M,\omega)$ without boundary. Assume that we have $C^2$ admissible solutions $\varphi_1$ and $\varphi_2$ respectively to
\begin{equation*}
	(\chi + \tilde \chi + t\omega + \sqrt{-1} \partial\bar\partial \varphi_1 (x,t))^m \wedge \omega^{n - m}
	=
	e^{mb_t} e^{mf_1} \omega^n ,
	\quad
	\sup_M \varphi_1 = 0 ,
\end{equation*}
and
\begin{equation*}
	(\chi + \tilde \chi + t\omega + \sqrt{-1} \partial\bar\partial \varphi_2 (x,t))^m \wedge \omega^{n - m}
	\geq 
	0 ,
	\quad
	\sup_M \varphi_2 \leq 0
	,
\end{equation*}
where
\begin{equation*}
\int_M e^{mf_1} \omega^n  = \int_M (\chi + \tilde \chi)^m \wedge \omega^{n - m} ,
\end{equation*}
and
\begin{equation*}
\int_M (\chi + \tilde \chi + t\omega)^m \wedge \omega^{n - m} = e^{mb_t} \int_M (\chi + \tilde \chi)^m \wedge \omega^{n - m} .
\end{equation*}
Suppose that for $q > 1$, there is a constant $K > 0$ so that
\begin{equation*}
	\Vert e^{nf_1}\Vert_{L^q} \leq K .
\end{equation*}
Then for any $\epsilon > 0$, there exists a constant $C > 0$ such that
\begin{equation}
\label{inequality-4-41}
	\sup_M (\varphi_2 - \varphi_1) 
	\leq 
	C \Vert (\varphi_2 - \varphi_1)^+ \Vert^{\frac{1}{n + 1 + \epsilon}}_{L^{q^*}} ,
\end{equation} 
where $q^* = \frac{q}{q - 1}$.

\end{theorem}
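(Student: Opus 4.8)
The plan is to mimic the stability argument of Theorem~\ref{theorem-3-3}, but now adapted to the degenerate background form $\tilde\chi$ by using the auxiliary Monge-Amp\`ere comparison against $\tilde\chi + t\omega$ rather than $\delta\omega$. The crucial structural input is the same $n$-th root inequality (Garding/Maclaurin): from the $C^2$ admissibility of $\varphi_1$ and $\varphi_2$ together with concavity of $S_m^{1/m}$, for $0 < r < \tfrac12$ one gets pointwise
\begin{equation*}
	\left(\chi + \tilde\chi + t\omega + \tfrac{1}{r}\sqrt{-1}\partial\bar\partial\varphi_1 - \tfrac{1-r}{r}\sqrt{-1}\partial\bar\partial\varphi_2\right)^m\wedge\omega^{n-m} \leq \frac{e^{mb_t}e^{mf_1}}{r^m}\,\omega^n
\end{equation*}
wherever the left-hand form lies in $\bar\Gamma^m_\omega$, and hence by Maclaurin its $n$-th power is bounded by a constant multiple of $e^{nf_1} r^{-n}\omega^n$. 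This is exactly the ingredient needed to run the Guo-Phong auxiliary-equation machinery, and the uniform-in-$t$ control of $e^{nb_t}/V_t$ is already furnished by \eqref{inequality-4-3}.

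First I would fix the maximum point $\mathcal{Z}$ of $\tfrac{1-r}{r}\varphi_2 - \tfrac{1}{r}\varphi_1$ and work in a local chart as in Theorem~\ref{theorem-3-1}, setting $u_s(z) = \tfrac{1}{r}\varphi_1(z) - \tfrac{1-r}{r}\varphi_2(z) - (\text{value at }\mathcal Z) + \tfrac{t}{2}|z|^2 - s$ on $\Omega = B(\mathcal Z, 2r_0)$, with $\Omega_s = \{u_s < 0\}$. Then I would solve the Dirichlet problem $(\sqrt{-1}\partial\bar\partial\psi_{s,k})^n = \tfrac{\tau_k(-u_s)}{A_{s,k}}\cdot\tfrac{C e^{nf_1}}{r^n}\omega^n$ with zero boundary data, form the barrier $\Phi = -\epsilon(-\psi_{s,k})^{n/(n+1)} - u_s$ with $\epsilon = (\tfrac{n}{n+1})^{-n/(n+1)}A_{s,k}^{1/(n+1)}$, and show $\Phi\le 0$ by the maximum principle exactly as in Theorem~\ref{theorem-3-3}: at an interior max one gets $\delta$-type positivity of the combined form dominating $\tfrac{\epsilon n}{n+1}(-\psi_{s,k})^{-1/(n+1)}\sqrt{-1}\partial\bar\partial\psi_{s,k}$, and comparing volume forms yields the pointwise bound on $-u_s$. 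From the $\alpha$-invariant estimate one obtains the exponential integrability $\int_{\Omega_s}\exp(\beta(-u_s)^{(n+1)/n}/A_s^{1/n})\omega^n \le C$, hence $\int_{\Omega_s}(-u_s)^p\omega^n \le C A_s^{p/(n+1)}$ for any $p>1$.

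Next I would estimate $A_s = \tfrac{C}{r^n}\int_{\Omega_s}(-u_s)e^{nf_1}\omega^n$ by a three-factor H\"older inequality splitting $e^{nf_1}\omega^n$ into $e^{qnf_1}$, $(-u_s)^p$, and $e^{nf_1}$ pieces, plug in the $L^q$ bound $\Vert e^{nf_1}\Vert_{L^q}\le K$ and the polynomial bound above, and choose $p$ large so that the resulting recursion reads $t\int_{\Omega_{s-t}}e^{nf_1}\omega^n \le \tfrac{C}{r}K^{\cdots}(\int_{\Omega_s}e^{nf_1}\omega^n)^{1+\delta_0}$ for $\delta_0$ close to $\tfrac1n$. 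Lemma~\ref{lemma-3-2} then gives $\int_{\Omega_{s_0}}e^{nf_1}\omega^n \ge c_0 r^{1/\delta_0} K^{-(\frac{1}{n\delta_0}-1)}$. Finally, a dichotomy on the size of $\Vert(\varphi_2-\varphi_1)^+\Vert_{L^{q^*}}$: if it is large the bound is trivial from $\Vert\varphi_1\Vert_{L^\infty}$ (Theorem~\ref{theorem-4-3}); if small, set $r := \Vert(\varphi_2-\varphi_1)^+\Vert_{L^{q^*}}^{\delta_0 q^*/(1+\delta_0 q^*)}$ (so that, with $q'=q^*$, the exponent matches $\tfrac{1}{n+1+\epsilon}$ after choosing $\delta_0$ appropriately near $\tfrac1n$), and either the value $\varphi_1(\mathcal Z) - (1-r)\varphi_2(\mathcal Z)$ is controlled by $r$ directly, or else on $\Omega_{s_0}$ one has $\tfrac1r\varphi_1 - \tfrac{1-r}{r}\varphi_2 + \tfrac t2|z|^2 < -1-\Vert\varphi_1\Vert_{L^\infty}$, which combined with the lower bound on $\int_{\Omega_{s_0}}e^{nf_1}\omega^n$ and a final H\"older inequality against $e^{nf_1}\omega^n$ pins $\sup_M(\varphi_2-\varphi_1)$ down by a constant times $r$.

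The main obstacle I expect is bookkeeping the exponents so that the final power is $\tfrac{1}{n+1+\epsilon}$ uniformly in $t\in(0,1]$: one must verify that every constant $C$ arising — from the $\alpha$-invariant, from \eqref{inequality-4-3} bounding $e^{nb_t}/V_t$ in terms of fixed cohomological integrals of $\chi,\tilde\chi,\omega$, and from the H\"older steps — is independent of $t$, and that the choice of $p$ (hence $\delta_0$) can be made so that $\tfrac{\delta_0 q^*}{1+\delta_0 q^*} = \tfrac{1}{n+1+\epsilon}$ for the given $\epsilon$; since $q^*$ is fixed and $\delta_0$ ranges over $(0,\tfrac1n)$, this is a matter of checking the algebra closes, with the $t$-dependent quantity $V_t$ only entering through the two-sided bound already established. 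A secondary subtlety is that $\varphi_2$ is only assumed to satisfy the inequality $(\chi+\tilde\chi+t\omega+\sqrt{-1}\partial\bar\partial\varphi_2)^m\wedge\omega^{n-m}\ge 0$, so one must be careful that the Garding concavity step only uses membership in $\bar\Gamma^m_\omega$ and the single equation for $\varphi_1$, which it does.
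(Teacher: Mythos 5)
There is a genuine gap, and it is exactly at the point you dismiss as ``bookkeeping'': uniformity in $t$. Your construction transplants the local-chart argument of Theorem~\ref{theorem-3-3} with the fixed positive constant $\delta$ replaced by $t$: the comparison function carries the term $\frac{t}{2}|z|^2$, so the admissible range of the sublevel parameter shrinks to $s_0 = 2tr_0^2$, and the Kolodziej-type iteration (Lemma~\ref{lemma-3-2}) then only yields $\int_{\Omega_{s_0}} e^{nf_1}\omega^n \gtrsim (s_0 r)^{1/\delta_0} \sim t^{1/\delta_0} r^{1/\delta_0}$. This lower bound enters the final dichotomy in the denominator, so the constant $C$ you produce blows up like a negative power of $t$ as $t \to 0$. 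But the whole point of this theorem (in contrast with Theorem~\ref{theorem-3-3}, where $\delta>0$ is fixed) is a bound with $C$ depending only on $n,m,\omega,\chi,\tilde\chi,K,q,\epsilon$ and \emph{not} on $t\in(0,1]$; this is what Theorem~\ref{theorem-4-3} was proved for and what the construction of the weak solution as $t\to 0$ in the next subsection relies on. Semipositivity of $\tilde\chi$ cannot repair the local argument, since $\tilde\chi$ is only semipositive and big and does not dominate any positive multiple of $\omega$; a Boucksom-type potential making $\tilde\chi+\sqrt{-1}\partial\bar\partial\rho \geq \kappa\omega$ is singular and is only used in Section 6, on the ample locus, for second-order estimates.

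The paper avoids this degeneration by never localizing: the auxiliary equation is solved \emph{globally} on $M$ by Yau's theorem in the class of $\tilde\chi + t\omega$, with sublevel sets $M_s = \{-\varphi_1 + (1-r)\varphi_2 > sr\}$ defined on all of $M$; the comparison function carries the extra shift $\frac{n}{n+1}A_{s,k}$, the exponential integrability comes from the $\alpha$-invariant and produces the factor $e^{\beta A_s}$ (which forces the separate bound \eqref{inequality-4-57} on $E=\sup_s A_s$, a step absent from your outline), and the iteration is the De Giorgi Lemma~\ref{lemma-4-1} applied to \eqref{inequality-4-54}, giving the sup bound \eqref{inequality-4-55} directly with no parameter $s_0\sim t$ anywhere. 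In that route the only $t$-dependence is through $e^{nb_t}/V_t$ and $\Vert\varphi_1\Vert_{L^\infty}$, both controlled uniformly by \eqref{inequality-4-3} and Theorem~\ref{theorem-4-3}. Your Garding/concavity step and the exponent algebra ($\delta_0 = \frac{1}{n+\epsilon}$ giving the power $\frac{1}{n+1+\epsilon}$) are fine, but as written the argument proves only a $t$-dependent stability estimate, not the stated theorem.
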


\begin{proof}

By Theorem~\ref{theorem-4-3}, there is a constant $L > 0$ so that
\begin{equation*}
	\Vert \varphi_1 (x,t) \Vert_{L^\infty}\leq L  , \qquad \forall 0 < t \leq 1 .
\end{equation*}
If $\Vert (\varphi_2 - \varphi_1)^+\Vert_{L^{q^*}} = 0$, then
\begin{equation*}
	\varphi_2 \leq \varphi_1 \qquad \text{ on } M , 
\end{equation*}
and Inquality~\eqref{inequality-4-41} holds true. Without loss of generality, we may assume that
\begin{equation*}
	\Vert (\varphi_2 - \varphi_1)^+\Vert_{L^{q^*}} > 0 .
\end{equation*}

We define
\begin{equation*}
	F (\chi) = \left(\frac{\chi^m \wedge \omega^{n - m}}{\omega^n}\right)^{\frac{1}{m}} .
\end{equation*}
It is well known that $F$ is elliptic, homogeneous of degree 1 and concave when $\chi \in \Gamma^m_\omega$. For $0 < r < \frac{1}{2}$ ,  
\begin{equation}
\label{inequality-4-45}
\begin{aligned}
	e^{b_t} e^{f_1}
%	&= F (\chi + \tilde \chi + t\omega + \sqrt{-1} \partial\bar\partial \varphi_1)  \\
	&\geq (1 - r) F(\chi + \tilde \chi + t\omega + \sqrt{-1} \partial\bar\partial\varphi_2) \\
	&\qquad + r F\left(\chi + \tilde \chi + t\omega + \frac{1}{r} \sqrt{-1} \partial\bar\partial \varphi_1 - \frac{1 - r}{r} \sqrt{-1} \partial\bar\partial \varphi_2 \right) \\
	&\geq r F\left( \tilde \chi + t\omega + \frac{1}{r} \sqrt{-1} \partial\bar\partial \varphi_1 - \frac{1 - r}{r} \sqrt{-1} \partial\bar\partial \varphi_2 \right) \\
	&\geq r \left(\frac{\left(\tilde \chi + t\omega +\frac{1}{r} \sqrt{-1} \partial\bar\partial \varphi_1 - \frac{1 - r}{r} \sqrt{-1} \partial\bar\partial \varphi_2\right)^n}{\omega^n}\right)^{\frac{1}{n}}
	,
\end{aligned}
\end{equation}
wherever $\tilde \chi + t\omega + \frac{1}{r} \sqrt{-1} \partial\bar\partial \varphi_1 - \frac{1 - r}{r} \sqrt{-1} \partial\bar\partial \varphi_2 \geq 0$. Rewriting \eqref{inequality-4-45},
\begin{equation*}
	\frac{e^{nb_t} e^{nf_1}}{r^n} \omega^n \geq \left(\tilde \chi + t\omega +\frac{1}{r} \sqrt{-1} \partial\bar\partial \varphi_1 - \frac{1 - r}{r} \sqrt{-1} \partial\bar\partial \varphi_2\right)^n .
\end{equation*}

We can solve the auxiliary equation by \cite{Yau1978},
\begin{equation*}
(\tilde \chi + t\omega + i \partial\bar\partial \psi_k)^n = \frac{\tau_k \left(\frac{-\varphi_1 + (1 - r) \varphi_2}{r} - s\right)}{A_{s,k}} \frac{e^{nb_t} e^{nf_1}}{r^n} \omega^n 	, \qquad \sup_M \psi_k = 0 ,
\end{equation*}
where 
$M_s := \{-\varphi_1 + (1 - r)\varphi_2 > s r\}$ and
%$V_t := \int_M (\tilde \chi + t\omega)^n$ and
\begin{equation*}
\begin{aligned}
A_{s,k} 
&:= 
%\frac{1}{V_t} \int_M \tau_k \left(\frac{-\varphi_1 + (1 - r) \varphi_2}{r} - s\right)  \frac{e^{nb_t} e^{nf_1}}{r^n} \omega^n \\
%&= 
\frac{1}{V_t} \int_M \tau_k \left(\frac{-\varphi_1 + (1 - r) \varphi_2}{r} - s\right)  \frac{e^{nb_t}  e^{nf_1}}{r^n} \omega^n \\
&\to
A_s := \frac{1}{V_t} \int_{M_s}   \left(\frac{-\varphi_1 + (1 - r) \varphi_2}{r} - s\right)  \frac{e^{nb_t} e^{nf_1}}{r^n} \omega^n  
.
\end{aligned}
\end{equation*}
Without loss of generality, we assume that $s \geq L + 1$.
It can be proven as in previous sections that 
%\begin{equation}
%\label{inequality-4-48}
%A^{-\frac{1}{n + 1}}_{s,k} \left( \frac{-\varphi_1 + (1 - r) \varphi_2}{r} - s\right)
% \leq \left(-  \frac{n+1}{n}  \psi_k +  A_{s,k} \right)^{\frac{n}{n + 1}} .
%\end{equation}
%There is a constant $\beta > 0$ such that
%\begin{equation}
%\label{inequality-4-49}
%\begin{aligned}
%%&\quad 
%\int_{M_s} \exp \left(\frac{\beta}{ A^{\frac{1}{n}}_{s,k} }\left( \frac{-\varphi_1 + (1 - r) \varphi_2}{r} - s\right)^{\frac{n+1}{n}} \right) \omega^n
%% \\
%%&\leq 
%%\int_{M_s} \exp \left(- \beta \frac{n+1}{n}  \psi_k + \beta A_{s,k} \right)  \omega^n \\
%&\leq C e^{\beta A_{s,k}} 
%.
%\end{aligned}
%\end{equation}
%Letting $k \to +\infty$ in  \eqref{inequality-4-49}, we obtain
\begin{equation*}
 \int_{M_s} \exp \left( \frac{\beta}{ A^{\frac{1}{n}}_{s} } \left( \frac{-\varphi_1 + (1 - r) \varphi_2}{r} - s\right)^{\frac{n+1}{n}} \right) \omega^n
\leq C e^{\beta A_{s}} .
\end{equation*}
Therefore, 
by H\"older inequality, for $q > 1$ and $p>n$,
\begin{equation*}
\begin{aligned}
&\quad \frac{\beta^p}{A^{\frac{p}{n}}_s} \int_{M_s} \left(\frac{- \varphi_1 + (1 - r) \varphi_2}{r} - s\right)^{\frac{(n + 1)p}{n}} \frac{e^{nb_t} e^{n f_1}}{r^n} \omega^n \\
%&=
%\frac{e^{nb_t}}{r^n} \int_{M_s}\left( \frac{\beta }{A^{\frac{1}{n }}_s}  \left(\frac{- \varphi_1 + (1 - r) \varphi_2}{r} - s\right)^{\frac{n + 1}{n}} \right)^p e^{n f_1}\omega^n 
%\\
&\leq 
\frac{e^{nb_t}}{r^n}
\left(\int_{M_s} \left(\frac{\beta }{A^{\frac{1}{n }}_s}  \left(\frac{- \varphi_1 + (1 - r) \varphi_2}{r} - s\right)^{\frac{n + 1}{n}}\right)^{ \frac{pq}{q - 1}} \omega^n \right)^{1 - \frac{1}{q}}
\left( \int_{M_s} e^{q n f_1} \omega^n\right)^{\frac{1}{q}} \\
%&\leq  
%\frac{e^{n b_t}}{r^n} \left(\int_{M_s} e^{- \frac{pq}{q - 1}} \left(\frac{pq}{q - 1}\right)^{\frac{pq}{q - 1}} \exp \left(\frac{\beta}{A^{\frac{1}{n}}_s} \left(\frac{-\varphi_1 + (1 - r) \varphi_2}{r} - s\right)^{\frac{n + 1}{n}}\right) \omega^n \right)^{1 - \frac{1}{q}}
%\left( \int_{M_s} e^{q n f_1} \omega^n\right)^{\frac{1}{q}} \\
%&= 
%\frac{e^{n b_t}}{r^n} e^{-p} \left(\frac{pq}{q - 1} \right)^p \left( \int_{M_s}\exp \left(\frac{\beta}{A^{\frac{1}{n}}_s} \left(\frac{-\varphi_1 + (1 - r) \varphi_2}{r} - s\right)^{\frac{n + 1}{n}}\right) \omega^n\right)^{1 - \frac{1}{q}}
%\left( \int_{M_s} e^{q n f_1} \omega^n\right)^{\frac{1}{q}} \\
&\leq 
\frac{e^{n b_t}}{r^n} e^{-p} \left(\frac{pq}{q - 1} \right)^p \left( \int_{M_s}\exp \left(\frac{\beta}{A^{\frac{1}{n}}_s} \left(\frac{-\varphi_1 + (1 - r) \varphi_2}{r} - s\right)^{\frac{n + 1}{n}}\right) \omega^n\right)^{1 - \frac{1}{q}}
K \\
%&\leq C  K \frac{e^{nb_t}}{r^n} e^{\beta \left(1 - \frac{1}{q}\right) A_s} \\
&\leq C  K \frac{e^{n b_t}}{r^n} e^{\beta \left(1 - \frac{1}{q}\right) E}
 ,
\end{aligned}
\end{equation*}
where $E$ is a uniform upper bound of $A_s$. Consequently, for any $p > n$,
\begin{equation*}
\begin{aligned}
A_s 
%&=
%\frac{1}{V_t} \int_{M_s} \left( \frac{-\varphi_1 + (1 - r) \varphi_2}{r} - s\right) \frac{e^{nb_t} e^{nf_1}}{r^n} \omega^n \\
&\leq 
\left(\frac{1}{V_t} \int_{M_s} \left(\frac{-\varphi_1 + (1 - r) \varphi_2}{r} - s\right)^{\frac{(n + 1)p}{n}} \frac{e^{n b_t} e^{nf_1}}{r^n} \omega^n\right)^{\frac{n}{(n + 1)p}} \\
&\qquad \cdot \left(\frac{1}{V_t} \int_{M_s} \frac{e^{nb_t} e^{nf_1}}{r^n} \omega^n\right)^{\frac{(n+1)p - n}{(n + 1)p}} \\
&\leq \left( C  K \frac{1}{V_t} \frac{e^{nb_t}}{r^n} e^{\beta (1 - \frac{1}{q}) E} \frac{A^{\frac{p}{n}}_s}{\beta^p} \right)^{\frac{n}{(n + 1)p}} 
\left(\frac{1}{V_t} \int_{M_s} \frac{e^{nb_t} e^{nf_1}}{r^n} \omega^n\right)^{\frac{(n+1)p-n}{(n + 1)p}}  \\
&\leq C (p,q,\beta,K,E) A^{\frac{1}{n + 1}}_s \left(\frac{1}{V_t} \frac{e^{nb_t}}{r^n}\right)^{\frac{n}{(n + 1)p}}
\left(\frac{1}{V_t} \int_{M_s} \frac{e^{nb_t} e^{nf_1}}{r^n} \omega^n\right)^{\frac{(n+1)p-n}{(n+1)p}}  
\end{aligned}
\end{equation*}
and hence
\begin{equation*}
\begin{aligned}
A_s 
&\leq  
%C (p,q,\beta,K,E) \left(\frac{1}{V_t}\frac{e^{nb_t}}{r^n}\right)^{\frac{1}{p}}
%\left(\frac{1}{V_t} \int_{M_s} \frac{e^{nb_t} e^{nf_1}}{r^n} \omega^n\right)^{\frac{(n+1)p-n}{np}}  \\
%&= 
% C (p,q,\beta,K,E) \left( \frac{e^{nb_t}}{V_t r^n}\right)^{\frac{n+1}{n}} 
% \left( \int_{M_s} e^{nf_1} \omega^n\right)^{\frac{(n+1)p-n}{np}} \\
% & = 
 C (p,q,\beta,K,E) \left( \frac{e^{nb_t}}{V_t }\right)^{\frac{n+1}{n}} \frac{1}{r^{n + 1}}
  \left( \int_{M_s} e^{nf_1} \omega^n\right)^{\frac{(n+1)p-n}{np}} 
 .
\end{aligned}
\end{equation*}
%
%
%
%We define
%\begin{equation}
%\phi(s) := \int_{M_s} e^{nf_1} \omega^n .
%\end{equation}
For any $s' \in [0,1)$ and $s \geq L + 1$,
\begin{equation}
\label{inequality-4-54}
\begin{aligned}
%s' \phi(s + s')
%&=
s' \int_{M_{s + s'}} e^{nf_1} \omega^n %\\
&\leq 
\int_{M_{s + s'}} \left( \frac{-\varphi_1 + (1 - r) \varphi_2}{r} - s \right) e^{nf_1} \omega^n \\
&\leq 
\int_{M_{s}} \left( \frac{-\varphi_1 + (1 - r) \varphi_2}{r} - s \right) e^{nf_1} \omega^n \\
%&=
%\frac{r^n V_t}{e^{nb_t}} A_s \\
&\leq 
 C (p,q,\beta,K,E)  \left( \frac{e^{nb_t}}{V_t }\right)^{\frac{1}{n}} \frac{1}{r}
  \left( \int_{M_s} e^{nf_1} \omega^n\right)^{1 + \frac{1}{n} - \frac{1}{p}} %\\
%  &\leq 
%  C_{8} (p,q,\beta,K,E)  \left( \frac{e^{n b_1}}{V_0}\right)^{\frac{1}{n}} \frac{1}{r}
%    \phi^{1 + \frac{1}{n} - \frac{1}{p}}(s)
   .
\end{aligned}
\end{equation}
%The De Giorgi iteration shows that $\int_{M_s} e^{nf_1} \omega^n = 0$ for 
%\begin{equation*}
%s \geq \left(C_{9} (p,q,\beta,K,E,\chi',\chi) \frac{1}{r}\right)^{\frac{np}{p - n}} E + \Vert \varphi_1\Vert_{L^\infty} + \frac{1}{1 - 2^{-\frac{np}{p-n}}} .
%\end{equation*}
Applying Lemma~\ref{lemma-4-1} to Inequality~\ref{inequality-4-54}, we obtain that
%$
%	\int_{M_s} e^{nf_1}\omega^n = 0
%$
%when
%\begin{equation}
%	s
%	\geq 
%	L + C_{9} (p,q,\beta,K,E)  \left( \frac{e^{nb_t}}{V_t }\right)^{\frac{1}{n}} \frac{1}{r} \left( \int_M e^{nf_1} \omega^n\right)^{\frac{1}{n} - \frac{1}{p}}  
%%	\geq 
%%	L + C_{8} (p,q,\beta,K,E)  \left( \frac{e^{nb_t}}{V_t }\right)^{\frac{1}{n}} \frac{1}{r} \left( \int_{M_L} e^{nf_1} \omega^n\right)^{\frac{1}{n} - \frac{1}{p}} 2^{{\frac{p n + p - n}{p - n}} } 
%	.
%\end{equation}
\begin{equation}
\label{inequality-4-55}
\begin{aligned}
&\quad -\varphi_1 + (1 - r)\varphi_2 \\
&\leq 	
(L + 1) r + C (p,q,\beta,K,E)  \left( \frac{e^{nb_t}}{V_t }\right)^{\frac{1}{n}}  \left( \int_{M_{L + 1}} e^{nf_1} \omega^n\right)^{\frac{1}{n} - \frac{1}{p}}  \\
%&\leq 
%(L + 1) r +  C (p,q,\beta,K,E)  \left( \frac{e^{nb_t}}{V_t }\right)^{\frac{1}{n}} 
 %\left(\int_{M_{L + 1}} \left(\frac{- \varphi_1 + (1 - r) \varphi_2}{r} - L\right) e^{nf_1} \omega^n\right)^{\frac{1}{n} - \frac{1}{p}}  \\
%&\leq 
%(L + 1) r +  C (p,q,\beta,K,E)   \left( \frac{e^{nb_t}}{V_t }\right)^{\frac{1}{n}} 
% \left(\int_{M_{L+1}} \left(\frac{(1 - r)(\varphi_2 - \varphi_1)}{r} - \varphi_1 - L\right) e^{nf_1} \omega^n\right)^{\frac{1}{n} - \frac{1}{p}}  \\
&\leq 
(L + 1) r +  C (p,q,\beta,K,E)   \left( \frac{e^{nb_t}}{V_t }\right)^{\frac{1}{n}} 
 \left(\int_{M_{L+1}} \left(\frac{(1 - r)(\varphi_2 - \varphi_1)}{r} \right) e^{nf_1} \omega^n\right)^{\frac{1}{n} - \frac{1}{p}}  \\
%&\leq 
%(L + 1) r +  C (p,q,\beta,K,E)   \left( \frac{e^{nb_t}}{V_t }\right)^{\frac{1}{n}} 
% \left(\int_{M}  \frac{(\varphi_2 - \varphi_1)^+}{r}  e^{nf_1} \omega^n\right)^{\frac{1}{n} - \frac{1}{p}}  \\
&\leq 
(L + 1) r +  \frac{C (p,q,\beta,K,E)}{r^{\frac{1}{n} - \frac{1}{p}}}   \left( \frac{e^{nb_t}}{V_t }\right)^{\frac{1}{n}} 
 \left(\int_{M}  (\varphi_2 - \varphi_1)^+ e^{nf_1} \omega^n\right)^{\frac{1}{n} - \frac{1}{p}}  \\
&\leq 
(L + 1) r +  \frac{C (p,q,\beta,K,E)}{r^{\frac{1}{n} - \frac{1}{p}}}   \left( \frac{e^{nb_t}}{V_t }\right)^{\frac{1}{n}} 
 \left( \Vert (\varphi_2 - \varphi_1)^+ \Vert_{L^{q^*}} \Vert e^{nf_1} \Vert_{L^q}\right)^{\frac{1}{n} - \frac{1}{p}} 
.
\end{aligned}
\end{equation}
Similarly, when $s \geq L + 1$,
\begin{equation}
\label{inequality-4-56}
\begin{aligned}
	A_s 
%	&= 
%	\frac{1}{V_t} \int_{M_s} \left( \frac{-\varphi_1 + (1 - r) \varphi_2}{r} - s\right) \frac{e^{nb_t} e^{nf_1}}{r^n} \omega^n \\
%	&=
%	\frac{1}{V_t} \int_{M_s} \left(  \frac{ (1 - r) }{r}  (\varphi_2 - \varphi_1) - \varphi_1 - s\right) \frac{e^{nb_t} e^{nf_1}}{r^n} \omega^n \\
	&\leq 
	\frac{1}{V_t} \int_{M_s} \frac{ (1 - r) }{r}  (\varphi_2 - \varphi_1)^+\frac{e^{nb_t} e^{nf_1}}{r^n} \omega^n \\
	&\leq \frac{2 e^{n b_1}}{V_0 r^{n + 1}} \int_{M_s} (\varphi_2 - \varphi_1)^+ e^{nf_1} \omega^n \\
%	&\leq  \frac{2 e^{n b_1}}{V_0 r^{n + 1}} \int_{M} (\varphi_2 - \varphi_1)^+ e^{nf_1} \omega^n \\
	&\leq \frac{2 e^{n b_1}}{V_0 r^{n + 1}} \Vert (\varphi_2 - \varphi_1)^+ \Vert_{L^{q^*}} \Vert e^{nf_1} \Vert_{L^q} ,
\end{aligned}
\end{equation}
%where $q^*$ is the conjugate number of $q$. 
that is,
\begin{equation}
\label{inequality-4-57}
	E \leq \frac{2 e^{n b_1}}{V_0 r^{n + 1}} \Vert (\varphi_2 - \varphi_1)^+ \Vert_{L^{q^*}} \Vert e^{nf_1} \Vert_{L^q} .
\end{equation}

We choose $p := \frac{n (n + \epsilon)}{\epsilon}$.
When $\Vert (\varphi_2 - \varphi_1)^+ \Vert_{L^{q^*}} < \frac{1}{2^{n + 1 + \epsilon}}$, we denote
\begin{equation*}
	r := 	\Vert (\varphi_2 - \varphi_1)^+ \Vert^{\frac{1}{n + 1 + \epsilon}}_{L^{q^*}}  < \frac{1}{2}.
\end{equation*}
Then we obtain \eqref{inequality-4-57} and \eqref{inequality-4-55},
\begin{equation*}
	E \leq  \frac{2 e^{n b_1}}{V_0 }  r^{\epsilon}  \Vert e^{nf_1} \Vert_{L^q}  \leq \frac{2^{1 - \epsilon} e^{n b_1}}{V_0}  K ,
\end{equation*}
and 
\begin{equation*}
	- \varphi_1 + (1 - r) \varphi_2 
	\leq 
	\left(L + 1+  C  \left( \frac{e^{nb_t}}{V_t }\right)^{\frac{1}{n}} \right) r .
\end{equation*}
Therefore,
\begin{equation*}
	\varphi_2 - \varphi_1 
	\leq 
	\left( L + 1+  C \left( \frac{e^{nb_t}}{V_t }\right)^{\frac{1}{n}} \right) r .
\end{equation*}
Otherwise when $\Vert (\varphi_2 - \varphi_1)^+ \Vert_{L^{q^*}} \geq \frac{1}{2^{n + 1 + \epsilon}}$,
\begin{equation*}
	\varphi_2 - \varphi_1 \leq L \leq 2^{n + 1 + \epsilon} L \Vert (\varphi_2 - \varphi_1)^+ \Vert_{L^{q^*}}  .
\end{equation*}

\end{proof}

\medskip
\subsection{Weak solution}
By Lemma~\ref{lemma-3-4}, there is a sequence $t_i \to 0$ such that $\varphi_{t_i} $ is convergent in $L^{q^*}$ norm.

Observing the simple fact that $\chi + \tilde \chi + t\omega + \sqrt{-1} \partial\bar\partial \varphi_{t'} \in \Gamma^m_{\omega}$ for $0 < t' < t$, we can pick a subsequence $\{\varphi_{t_i}\}$ such that 
\begin{equation*}
\sup_M (\varphi_{t_{i+1}} - \varphi_{t_i} ) < \frac{1}{2^{i + 1}} .
\end{equation*}
Then, $\varphi_{t_i} + \frac{1}{2^i}$ is decreasing to a bounded function $\varphi$, and also $\varphi_{t_i}$ is convergent to $\varphi$. 
Therefore, $\varphi$ is a solution in pluripotential sense.

If $e^{nf}$ is simply a $L^q$ function, we can apply the classical diagonal method to choose a smooth sequence decreasing to certain bounded function $\varphi$, with the help of Theorem~\ref{theorem-4-4}. Therefore, we obtain the weak solution in pluripotential sense.

\begin{theorem}
Let $\chi \in \bar\Gamma^m_\omega$ be a closed real $(1,1)$-form, and $\tilde \chi$ be a semipositive and big closed $(1,1)$-form on compact K\"ahler manifolds $(M,\omega)$ without boundary. 
Then there is a  bounded solution in pluripotential sense to Equation~\eqref{equation-1} 
for $q > 1$ and $e^{nf} \in L^q$. 

\end{theorem}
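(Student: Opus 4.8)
The plan is to obtain $\varphi$ as a limit of solutions $\varphi_t$ of the non-degenerate approximation equations~\eqref{equation-4-2}, using the uniform bound of Theorem~\ref{theorem-4-3} together with the two stability estimates of this section. First I treat the case in which $e^{f}$ is smooth. For each $t\in(0,1]$ the form $\chi+\tilde\chi+t\omega$ has, pointwise, eigenvalues in $\Gamma^m$ with respect to $\omega$: indeed $\tilde\chi+t\omega>0$, and adding such a positive form to $\chi\in\bar\Gamma^m_\omega$ lands in $\Gamma^m_\omega$ because $\bar\Gamma^m$ is a convex cone with $(1,\dots,1)$ in its interior and $\Gamma^m+\mathbb{R}^n_{\ge0}\subset\bar\Gamma^m$. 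Hence \eqref{equation-4-2} is solvable by the theory of \cite{HouMaWu2010}\cite{Sun2017u}\cite{Szekelyhidi}\cite{Zhang2017}, producing a smooth admissible $\varphi_t$ with $\sup_M\varphi_t=0$ and normalizing constant $e^{mb_t}$; by Theorem~\ref{theorem-4-3} there is $L>0$ with $\Vert\varphi_t\Vert_{L^\infty}\le L$ for all $t$. For $0<t'<t\le1$ we have $\chi+\tilde\chi+t\omega+\sqrt{-1}\partial\bar\partial\varphi_{t'}=(\chi+\tilde\chi+t'\omega+\sqrt{-1}\partial\bar\partial\varphi_{t'})+(t-t')\omega$, so $(\chi+\tilde\chi+t\omega+\sqrt{-1}\partial\bar\partial\varphi_{t'})^m\wedge\omega^{n-m}\ge0$, and $\varphi_{t'}$, $\varphi_t$ are admissible inputs for the stability estimate proved just above, giving $\sup_M(\varphi_{t'}-\varphi_t)\le C\Vert(\varphi_{t'}-\varphi_t)^+\Vert_{L^{q^*}}^{1/(n+1+\epsilon)}$. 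By Lemma~\ref{lemma-3-4} the bounded family $\{\varphi_{1/k}\}_{k\ge1}$ is precompact in $L^{q^*}$; passing to a subsequence $t_i\downarrow0$ with $\varphi_{t_i}\to\varphi$ in $L^{q^*}$ and refining so that $\sup_M(\varphi_{t_{i+1}}-\varphi_{t_i})<2^{-(i+1)}$, the functions $\varphi_{t_i}+2^{-i}$ decrease to a bounded $m$-$(\chi+\tilde\chi,\omega)$-subharmonic function which agrees a.e. with $\varphi$, and $\sup_M\varphi=0$ since $\sup_M(\varphi_{t_i}+2^{-i})=2^{-i}\to0$ and the supremum of a decreasing limit of upper semicontinuous functions on a compact space is the limit of the suprema.

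Next I pass to the limit in the equation. Since $\tilde\chi$ is big, $\int_M(\chi+\tilde\chi)^m\wedge\omega^{n-m}>0$, and from the normalization $e^{mb_t}=\int_M(\chi+\tilde\chi+t\omega)^m\wedge\omega^{n-m}\big/\int_M(\chi+\tilde\chi)^m\wedge\omega^{n-m}\to1$ as $t\to0$. Because $\varphi_{t_i}+2^{-i}\downarrow\varphi$, the functions involved are uniformly bounded, and $\omega$, $\chi$, $\tilde\chi$ are closed, the Bedford--Taylor type continuity of mixed complex Hessian currents along decreasing sequences of bounded $m$-subharmonic functions (\cite{Blocki2005}\cite{DinewKolodziej2014}) gives
\begin{equation*}
(\chi+\tilde\chi+t_i\omega+\sqrt{-1}\partial\bar\partial\varphi_{t_i})^m\wedge\omega^{n-m}\;\rightharpoonup\;(\chi+\tilde\chi+\sqrt{-1}\partial\bar\partial\varphi)^m\wedge\omega^{n-m},
\end{equation*}
while the right-hand sides $e^{mb_{t_i}}e^{mf}\omega^n$ converge to $e^{mf}\omega^n$. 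Thus $\varphi$ is a bounded admissible pluripotential solution of~\eqref{equation-1}, and the stated mass identity $\int_M e^{mf}\omega^n=\int_M(\chi+\tilde\chi)^m\wedge\omega^{n-m}$ is precisely the equality of total masses of the two sides.

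For the general case $e^{nf}\in L^q$ with $q>1$, choose smooth positive functions $e^{mf_j}$ with $\int_M e^{mf_j}\omega^n=\int_M(\chi+\tilde\chi)^m\wedge\omega^{n-m}$, with $\Vert e^{nf_j}\Vert_{L^q}$ uniformly bounded, and with $\Vert e^{mf_j}-e^{mf}\Vert_{L^1}\to0$ as fast as we wish (mollification followed by a constant rescaling to restore the integral). The construction above yields bounded pluripotential solutions $\varphi^{(j)}$, and Theorem~\ref{theorem-4-4} gives $\sup_M|\varphi^{(j)}-\varphi^{(k)}|\le Cr$ whenever $\Vert e^{mf_j}-e^{mf_k}\Vert_{L^1}<\gamma(r)r^{\frac{(n+1)(nq-m)}{n(q-1)}+2}$; choosing the approximation speed accordingly makes $\{\varphi^{(j)}\}$ Cauchy in $L^\infty$, hence uniformly convergent to a bounded $\varphi$. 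Passing to a subsequence $\{\varphi^{(j_k)}\}$ with $\sup_M(\varphi^{(j_{k+1})}-\varphi^{(j_k)})<2^{-(k+1)}$, the functions $\varphi^{(j_k)}+2^{-k}$ decrease to $\varphi$, so once more the continuity of the Hessian currents along decreasing sequences identifies $\varphi$ as a bounded pluripotential solution of~\eqref{equation-1} with the required mass identity, and $\sup_M\varphi=0$.

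The main obstacle is the limit passage: one must know that the pluripotential product $(\chi+\tilde\chi+\sqrt{-1}\partial\bar\partial\varphi)^m\wedge\omega^{n-m}$ is well defined for the bounded, possibly discontinuous $\varphi$ and arises as the weak limit of the approximating currents even though both the potentials and the background forms $\chi+\tilde\chi+t\omega$ vary with $t$. This is exactly why the hypothesis that $\chi$, $\tilde\chi$ and $\omega$ be closed is essential — it guarantees that every current in play is genuinely $m$-$\omega$-positive and that the monotone-convergence continuity theorem for $m$-subharmonic functions on the K\"ahler manifold applies — and why the uniform $L^\infty$ bound of Theorem~\ref{theorem-4-3} is needed, since it ensures the decreasing limit is bounded and the convergence non-vacuous; uniqueness of the solution is postponed to Section~5.
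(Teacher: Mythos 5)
Your argument is correct and is essentially the paper's own proof, just with several steps written out that the paper leaves implicit. The paper also produces $\varphi_t$ from \eqref{equation-4-2}, uses Lemma~\ref{lemma-3-4} for $L^{q^*}$ precompactness, notes exactly the same observation that $\chi+\tilde\chi+t\omega+\sqrt{-1}\partial\bar\partial\varphi_{t'}\in\Gamma^m_\omega$ for $0<t'<t$ in order to feed the weak stability estimate \eqref{inequality-4-41}, refines to a subsequence with $\sup_M(\varphi_{t_{i+1}}-\varphi_{t_i})<2^{-(i+1)}$ so that $\varphi_{t_i}+2^{-i}$ decreases, and then invokes Theorem~\ref{theorem-4-4} together with a diagonal argument for the $L^q$ case. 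What you add is useful bookkeeping: the verification that $\chi+\tilde\chi+t\omega\in\Gamma^m_\omega$, the limit $e^{mb_t}\to1$, and the explicit appeal to Bedford--Taylor-type monotone continuity to pass to the limit in the equation, none of which the paper spells out.

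Two small points of precision worth noting. First, in the $L^q$ step you apply Theorem~\ref{theorem-4-4} directly to the $t=0$ pluripotential solutions $\varphi^{(j)}$, whereas that theorem is stated for the $t$-regularized solutions; the intended argument (and what the paper's ``diagonal method'' does) is to compare $\varphi^{(j)}_t$ and $\varphi^{(k)}_t$ at the same $t>0$, use the $t$-uniformity of the constants coming from Theorem~\ref{theorem-4-3} and \eqref{inequality-4-3}, and only then send $t\to 0$ along the previously constructed decreasing subsequences. Second, the reason $\chi+\tilde\chi+t\omega\in\Gamma^m_\omega$ is that $\bar\Gamma^m+\Gamma^n\subset\Gamma^m$ (adding a point with all eigenvalues strictly positive to a boundary point of the open cone lands in the open cone); your phrasing ``$\Gamma^m+\mathbb{R}^n_{\ge0}\subset\bar\Gamma^m$'' is true but not the fact you actually need. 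Neither remark affects the validity of the argument.
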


Noticing that $\varphi_{t_i}$ is bounded in $W^{1,2}$, we can pick a subsequence weakly convergent to a certain function $\tilde \varphi \in W^{1,2}$. Passing to a subsequence again, we may also assume that $\varphi_{t_i}$ is convergent to $\tilde\varphi$ almost everywhere, which implies that $\tilde \varphi = \varphi$. Moreover,  a similar argument to \cite{Cegrell2007}  shows that $\varphi_{t_i}$ is convergent in $W^{1,2}$ to $\varphi$.

\medskip

\section{Uniqueness}

In this section, we shall show that the admissible solution to complex Hessian equation~\eqref{equation-1} is unique up to an additive constant under appropriate conditions.

\begin{theorem}
Let $\chi \in \bar\Gamma^m_\omega$ be a closed real $(1,1)$-form, and $\tilde \chi$ be a semipositive and big closed $(1,1)$-form on compact K\"ahler manifolds $(M,\omega)$ without boundary in this section. Assume that under pluripotential sense
\begin{equation}
\label{equation-5-1}
	(\chi + \tilde \chi + \sqrt{-1} \partial\bar\partial \varphi_1)^m \wedge \omega^{n - m} = e^{mf}  \omega^n, \quad       ess \sup_M \varphi_1 = 0
\end{equation}
and
\begin{equation}
\label{equation-5-2}
	(\chi + \tilde \chi + \sqrt{-1} \partial\bar\partial \varphi_2)^m \wedge \omega^{n - m} = e^{mf}  \omega^n, \quad      ess  \sup_M \varphi_2 = 0 . 
\end{equation}
%where $e^{nf} \in L^q$ for $q > 1$. 
Then $\varphi_1 = \varphi_2$ almost everywhere. 
\end{theorem}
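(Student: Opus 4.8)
The strategy I would follow is the weak--solution version of Calabi's uniqueness argument, powered by the concavity of $S_m^{1/m}$ rather than by a comparison principle. Write $\beta := \chi + \tilde\chi$, let $\varphi_1,\varphi_2$ be the two bounded admissible solutions with $ess\sup_M \varphi_1 = ess\sup_M \varphi_2 = 0$, put $g := \varphi_1 - \varphi_2$, and for $t \in [0,1]$ set $u_t := (1-t)\varphi_2 + t\varphi_1$, a bounded $m$-$(\beta,\omega)$-subharmonic function with $\beta + \sqrt{-1}\partial\bar\partial u_t = \beta + \sqrt{-1}\partial\bar\partial \varphi_2 + t\,\sqrt{-1}\partial\bar\partial g$. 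First I would show that every $u_t$ is again an admissible solution of \eqref{equation-1}: by G\r{a}rding's inequality $S_m^{1/m}$ is concave on $\Gamma^m$, and the corresponding mixed inequality for Hessian measures of bounded $m$-subharmonic functions (the analogue, due to Dinew--Ko\l odziej, of Ko\l odziej's mixed Monge--Amp\`ere inequality) gives $(\beta + \sqrt{-1}\partial\bar\partial\varphi_2)^{m-k}\wedge(\beta + \sqrt{-1}\partial\bar\partial\varphi_1)^{k}\wedge\omega^{n-m} \geq e^{mf}\omega^n$ for $0 \leq k \leq m$, hence $(\beta + \sqrt{-1}\partial\bar\partial u_t)^m\wedge\omega^{n-m} \geq e^{mf}\omega^n$; since $\beta$ and $\omega$ are closed and $u_t$ is bounded, $\int_M (\beta + \sqrt{-1}\partial\bar\partial u_t)^m\wedge\omega^{n-m} = \int_M \beta^m\wedge\omega^{n-m} = \int_M e^{mf}\omega^n$, so the inequality must be an equality for every $t \in [0,1]$.

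Next, by multilinearity of the Bedford--Taylor--type product, $t \mapsto (\beta + \sqrt{-1}\partial\bar\partial\varphi_2 + t\,\sqrt{-1}\partial\bar\partial g)^m\wedge\omega^{n-m}$ is a polynomial in $t$ with fixed--measure coefficients which is identically $e^{mf}\omega^n$ on $[0,1]$; therefore all its higher--order coefficients vanish, in particular the linear one, $(\beta + \sqrt{-1}\partial\bar\partial\varphi_2)^{m-1}\wedge \sqrt{-1}\partial\bar\partial g \wedge\omega^{n-m} = 0$. Pairing this identity with the bounded function $g$ and integrating by parts --- legitimate for bounded $m$-subharmonic potentials in the B\l ocki--Bedford--Taylor calculus --- gives $\int_M \sqrt{-1}\,\partial g\wedge\bar\partial g\wedge(\beta + \sqrt{-1}\partial\bar\partial\varphi_2)^{m-1}\wedge\omega^{n-m} = 0$; since $\bm{\lambda}(\beta + \sqrt{-1}\partial\bar\partial\varphi_2) \in \bar\Gamma^m$ and $\sqrt{-1}\,\partial g\wedge\bar\partial g \geq 0$ the integrand is a nonnegative ($m$-positive) measure, so this measure is zero.

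Finally I would conclude $g$ is constant. Almost everywhere on $M$ one has $\bm{\lambda}(\beta + \sqrt{-1}\partial\bar\partial\varphi_2) \in \bar\Gamma^m$ together with $S_m(\beta + \sqrt{-1}\partial\bar\partial\varphi_2) = C^m_n e^{mf} > 0$; by Maclaurin's inequality this forces $\bm{\lambda}(\beta + \sqrt{-1}\partial\bar\partial\varphi_2) \in \Gamma^m$, so $S_{m-1;i}(\bm{\lambda}(\beta + \sqrt{-1}\partial\bar\partial\varphi_2)) > 0$ for every $i$ almost everywhere, i.e.\ the absolutely continuous density of $(\beta + \sqrt{-1}\partial\bar\partial\varphi_2)^{m-1}\wedge\omega^{n-m}$ is a positive definite Hermitian form a.e. Since $\sqrt{-1}\,\partial g\wedge\bar\partial g$ has absolutely continuous density $\partial g\otimes\overline{\partial g}$, the vanishing of the measure above forces $\partial g = 0$ a.e.; hence $g$ is a.e.\ constant, and $ess\sup_M\varphi_1 = ess\sup_M\varphi_2 = 0$ gives $g\equiv 0$, i.e.\ $\varphi_1 = \varphi_2$ almost everywhere.

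The hard part is not the outline above but the pluripotential bookkeeping on which it rests: one must work consistently within the Bedford--Taylor--B\l ocki calculus for bounded $m$-subharmonic functions on $(M,\omega)$ so that the mixed Hessian products are well-defined, the mixed inequality and Stokes' theorem apply to merely bounded potentials, and the integration by parts in the second step is valid, and one must identify the absolutely continuous part of the Hessian measure with $S_m$ of the absolutely continuous density of $\beta + \sqrt{-1}\partial\bar\partial\varphi_i$. A secondary delicate point is that $\beta + \sqrt{-1}\partial\bar\partial\varphi_i$ only lies in $\bar\Gamma^m_\omega$ and hence is not a positive current; this is exactly why the equation $S_m(\beta + \sqrt{-1}\partial\bar\partial\varphi_i) = C^m_n e^{mf} > 0$ is invoked, since it upgrades the solutions to strict $m$-positivity on a full-measure set and thereby supplies the ellipticity that replaces positivity of $\beta + \sqrt{-1}\partial\bar\partial\varphi_i$ in the classical Calabi computation (should $e^{mf}$ be allowed to vanish on a set of positive measure, the last step would require restricting first to the ample locus of $\tilde\chi$).
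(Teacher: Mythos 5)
Your argument is a genuinely different route from the paper's, and it is a clean one when it applies — but it has a real gap that the paper is specifically constructed to avoid.

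What you do differently: you show the convex combination $u_t$ remains a solution using the Dinew--Ko\l{}odziej mixed Hessian inequality together with mass conservation, extract the vanishing of the linear-in-$t$ coefficient, integrate by parts, and then read off $\partial g=0$ from the strict $m$-positivity of $(\beta+\sqrt{-1}\partial\bar\partial\varphi_2)^{m-1}\wedge\omega^{n-m}$, which you obtain from the equation itself via $S_m>0\Rightarrow\bm{\lambda}\in\Gamma^m\Rightarrow S_{m-1;i}>0$. The paper instead writes the difference of the two Hessian measures as a polynomial in $\theta$ and integrates it against $\varphi_2-\varphi_1$, obtains pointwise vanishing of the order-$(m-1)$ Dirichlet density, uses concavity and Maclaurin's inequality to push this down to a level-$2$ identity $\sqrt{-1}\partial g\wedge\bar\partial g\wedge(\chi+\tilde\chi+\sqrt{-1}\partial\bar\partial u_\theta)\wedge\omega^{n-2}=0$, and, crucially, first replaces $\tilde\chi$ by $\tilde\chi+\sqrt{-1}\partial\bar\partial\psi$ where $\psi$ solves the Eyssidieux--Guedj--Zeriahi Monge--Amp\`ere equation, making the weight a genuine K\"ahler current. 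This is what supplies unconditional positivity of the weight independently of $f$.

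The gap in your proof is precisely the point you flag in passing at the end, and your proposed fix is not adequate. The theorem (and the main Theorem~1.5) only assume $e^{nf}\in L^q$, so $e^{mf}$ may vanish on a set of positive measure. On that set your chain $S_m=C^m_ne^{mf}>0\Rightarrow\bm{\lambda}\in\Gamma^m\Rightarrow S_{m-1;i}>0$ collapses, and the weight $(\beta+\sqrt{-1}\partial\bar\partial\varphi_2)^{m-1}\wedge\omega^{n-m}$ can degenerate there, so $\int_M\sqrt{-1}\partial g\wedge\bar\partial g\wedge(\beta+\sqrt{-1}\partial\bar\partial\varphi_2)^{m-1}\wedge\omega^{n-m}=0$ no longer forces $\partial g=0$ a.e. Restricting to the ample locus of $\tilde\chi$ does not help, because the degeneracy of your weight is caused by the zeros of $e^{mf}$, not by the non-ample locus; $\tilde\chi$ being big and semipositive gives no pointwise strict positivity of $\chi+\tilde\chi+\sqrt{-1}\partial\bar\partial\varphi_2$ on the ample locus either. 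What is actually needed is the paper's two moves: (i) descend from order $m-1$ to order $1$ (so the weight is a \emph{fixed} form $(\chi+\tilde\chi)\wedge\omega^{n-2}$ rather than the unknown, possibly degenerate current built from $\varphi_2$), and (ii) add $\sqrt{-1}\partial\bar\partial\psi$ with $(\tilde\chi+\sqrt{-1}\partial\bar\partial\psi)^n=c\,\omega^n$ so that $\chi+\tilde\chi+\sqrt{-1}\partial\bar\partial\psi$ is a.e.\ strictly positive (using also $\chi\in\bar\Gamma^m_\omega\subset\bar\Gamma^2_\omega$, which gives $S_{1;i}(\chi)\ge0$). Without some substitute for these, your argument proves uniqueness only under the extra assumption $e^{mf}>0$ a.e.

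A secondary point, which you do acknowledge: the mixed Hessian inequality you invoke is established for $m$-$\omega$-subharmonic functions; carrying it over to $m$-$(\chi+\tilde\chi,\omega)$-subharmonic potentials, where $\chi+\tilde\chi$ is only $\bar\Gamma^m_\omega$ and not positive, requires justification (the paper sidesteps this entirely by never using a mixed inequality — the nonnegativity of its $\theta$-integrand follows simply from convexity of $\Gamma^m$). Similarly, the identification of the absolutely continuous density of a bounded-potential Hessian measure with $S_m$ of the a.e.\ Hessian is a nontrivial Alexandrov-type fact that should be cited, not asserted.
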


\begin{proof}

%We shall show that $\varphi_1 = \varphi_2$ almost everywhere up to an additive constant. 
%Observing that $\tilde \chi$ can be replaced by any semipositive $(1,1)$-form in class $[\tilde\chi]$ in the following argument, we may claim that $\varphi_1 = \varphi_2$ in $Amp(\tilde\chi)$, if it holds true that $\varphi_1 = \varphi_2$ outside the vanishing locus of $\tilde \chi^n$.
By the work of Eyssidieux-Guedj-Zeriahi~\cite{Eyssidieux-Guedj-Zeriahi}, there is a bounded solution to 
\begin{equation*}
	(\tilde \chi + \sqrt{-1} \partial\bar\partial\psi)^n = \frac{\int_M \tilde \chi^n}{\int_M \omega^n} \omega^n, \qquad \sup_M \psi = 0,
\end{equation*}
under pluripotential sense.  In particular, $\psi$ is smooth on $Amp(\tilde \chi)$, whose the complement  is codimensional at least $2$. For convenience, we still use $\tilde \chi$, $\varphi_1$ and $\varphi_2$ to denote $\tilde \chi + \sqrt{-1} \partial\bar\partial \psi$, $\varphi_1 - \psi$ and $\varphi_2 - \psi$ respectively.

The difference of \eqref{equation-5-1} and \eqref{equation-5-2} can be written as
%\begin{equation}
%\begin{aligned}
%	0 
%	&= \int_M  (\varphi_2 - \varphi_1) \\
%	&\qquad\qquad \cdot\left(	(\chi + \tilde \chi + \sqrt{-1} \partial\bar\partial \varphi_1)^m \wedge \omega^{n - m} - 	(\chi + \tilde \chi +  \sqrt{-1} \partial\bar\partial \varphi_2)^m \wedge \omega^{n - m}\right) \\
%	&= \int_M (\varphi_2 - \varphi_1)  \sqrt{-1} \partial\bar\partial (\varphi_1 - \varphi_2) \\
%	&\qquad\qquad \wedge \sum^{m - 1}_{k = 0} 	(\chi + \tilde \chi +  \sqrt{-1} \partial\bar\partial \varphi_1)^k \wedge 	(\chi + \tilde \chi +  \sqrt{-1} \partial\bar\partial \varphi_2)^{m - 1 - k}  \wedge \omega^{n - m} \\
%	&= \int_M  \sqrt{-1} \partial (\varphi_1 - \varphi_2) \wedge \bar\partial (\varphi_1 - \varphi_2) \\
%	&\qquad\qquad \wedge \sum^{m - 1}_{k = 0}  (\chi + \tilde \chi +  \sqrt{-1} \partial\bar\partial \varphi_1)^k \wedge 	(\chi + \tilde \chi +  \sqrt{-1} \partial\bar\partial \varphi_2)^{m - 1 - k}  \wedge \omega^{n - m} .
%\end{aligned}
%\end{equation}
\begin{equation}
\label{equality-5-3}
\begin{aligned}
	0 
%	&= \int_M  (\varphi_2 - \varphi_1) \\
%	&\qquad\qquad \cdot\left(	(\chi + \tilde \chi +  \sqrt{-1} \partial\bar\partial \varphi_1)^m \wedge \omega^{n - m} - 	(\chi + \tilde \chi +  \sqrt{-1} \partial\bar\partial \varphi_2)^m \wedge \omega^{n - m}\right) \\
	&= \int_M  (\varphi_2 - \varphi_1)  	(\chi + \tilde \chi +  \sqrt{-1} \partial\bar\partial (\theta \varphi_1 + (1 - \theta) \varphi_2))^m \wedge \omega^{n - m} \bigg|^1_{\theta = 0} %\\
%	&= m \int^1_0  \bigg(\int_M (\varphi_2 - \varphi_1)    \sqrt{-1}  \partial\bar\partial (\varphi_1 - \varphi_2) \wedge  (\chi + \tilde \chi +  \sqrt{-1}  \partial\bar\partial (\theta \varphi_1 + (1 - \theta) \varphi_2))^{m - 1} \wedge \omega^{n - m}\bigg) d \theta 
	.
\end{aligned}
\end{equation}
We can pointwisely express
\begin{equation}
\label{equality-5-4}
\begin{aligned}
	&\quad 	(\chi + \tilde \chi +  \sqrt{-1} \partial\bar\partial (\theta \varphi_1 + (1 - \theta) \varphi_2))^m \wedge \omega^{n - m} \bigg|^1_{\theta = 0} \\
	&= m \int^1_0 \left(\sqrt{-1} \partial\bar\partial (\varphi_1 - \varphi_2) \wedge \left(\chi + \tilde \chi + \sqrt{-1} \partial\bar\partial (\theta \varphi_1 + (1 - \theta) \varphi_2)\right)^{m - 1} \wedge \omega^{n - m} \right) d\theta
	.
\end{aligned}
\end{equation}
Equality~\eqref{equality-5-4} is well defined, since the integrand in the right-hand side is just a polynomial of $\theta$. The result has no difference from a direct expansion, but is convenient in the following calculation. Substituting \eqref{equality-5-4} into \eqref{equality-5-3} and applying Stokes' Lemma,
\begin{equation}
\label{equality-5-5}
\begin{aligned}
	0 
%	&= \int_M  (\varphi_2 - \varphi_1) \\
%	&\qquad\qquad \cdot\left(	(\chi + \tilde \chi +  \sqrt{-1} \partial\bar\partial \varphi_1)^m \wedge \omega^{n - m} - 	(\chi + \tilde \chi +  \sqrt{-1} \partial\bar\partial \varphi_2)^m \wedge \omega^{n - m}\right) \\
%	&= \int_M  (\varphi_2 - \varphi_1)  	(\chi + \tilde \chi +  \sqrt{-1} \partial\bar\partial (\theta \varphi_1 + (1 - \theta) \varphi_2))^m \wedge \omega^{n - m} \bigg|^1_{\theta = 0} \\
%	&= m \int^1_0  \bigg(\int_M (\varphi_2 - \varphi_1)    \sqrt{-1}  \partial\bar\partial (\varphi_1 - \varphi_2) \\
%	&\qquad \qquad \qquad \quad \wedge  (\chi + \tilde \chi +  \sqrt{-1}  \partial\bar\partial (\theta \varphi_1 + (1 - \theta) \varphi_2))^{m - 1} \wedge \omega^{n - m}\bigg) d \theta \\
	&= m \int^1_0  \bigg(\int_M  \sqrt{-1}  \partial(\varphi_1 - \varphi_2) \wedge \bar\partial (\varphi_1 - \varphi_2) \\
	&\qquad \qquad \qquad \quad \wedge (\chi + \tilde \chi +  \sqrt{-1}  \partial\bar\partial (\theta \varphi_1 + (1 - \theta) \varphi_2))^{m - 1} \wedge \omega^{n - m}\bigg) d \theta 
	.
\end{aligned}
\end{equation}
If $m = 1$, then we obtain 
\begin{equation*}
	0 = m \int_M \sqrt{-1} \partial (\varphi_1 - \varphi_2) \wedge \bar\partial (\varphi_1 - \varphi_2) \wedge \omega^{n - m} ,
\end{equation*}
which implies that $\varphi_1 - \varphi_2 \equiv 0$ on $M$ up to an additive constant. Without loss of generality, we may assume that $m \geq 2$ in the following argument.

From  \eqref{equality-5-5}, we know that almost everywhere on $M\times [0,1]$,
%\begin{equation}
%	\frac{\sqrt{-1}  \partial(\varphi_1 - \varphi_2) \wedge \bar\partial (\varphi_1 - \varphi_2) \wedge (\chi + \tilde \chi +  \sqrt{-1}  \partial\bar\partial (\theta \varphi_1 + (1 - \theta) \varphi_2))^{m - 1} \wedge \omega^{n - m}}{\omega^n}
%	=
%	0
%	.
%\end{equation}
\begin{equation}
\label{equality-5-6}
\begin{aligned}
	&
	\sqrt{-1}  \partial(\varphi_1 - \varphi_2) \wedge \bar\partial (\varphi_1 - \varphi_2) \\
	&\qquad \qquad \wedge (\chi + \tilde \chi +  \sqrt{-1}  \partial\bar\partial (\theta \varphi_1 + (1 - \theta) \varphi_2))^{m - 1} \wedge \omega^{n - m} 
	= 0
	.
\end{aligned}
\end{equation}
By concavity of $S_k$, for any $0 \leq \theta_1, \theta_2 \leq 1$,
\begin{equation*}
\begin{aligned}
	&\quad \frac{1}{2^{m - 1}}\sqrt{-1} \partial (\varphi_1 - \varphi_2) \wedge \bar\partial (\varphi_1 - \varphi_2) \\
	&\quad \qquad\qquad \wedge (\chi + \tilde \chi + \sqrt{-1} \partial\bar\partial (\theta_1 \varphi_1 + (1 - \theta_1) \varphi_2))^{m - 1} \wedge \omega^{n - m} \\
	&\quad + \frac{1}{2^{m - 1}}\sqrt{-1} \partial (\varphi_1 - \varphi_2) \wedge \bar\partial (\varphi_1 - \varphi_2) \\
		&\qquad \qquad\qquad \wedge (\chi + \tilde \chi + \sqrt{-1} \partial\bar\partial (\theta_2 \varphi_1 + (1 - \theta_2) \varphi_2))^{m - 1} \wedge \omega^{n - m} \\
	&\leq \sqrt{- 1} \partial (\varphi_1 - \varphi_2) \wedge \bar\partial (\varphi_1 - \varphi_2) \\
	&\qquad \qquad \qquad \wedge\left(\chi + \tilde \chi + \sqrt{-1} \partial\bar\partial \left( \frac{\theta_1 + \theta_2}{2} \varphi_1 + \left(1 - \frac{\theta_1 + \theta_2}{2}\right) \varphi_2\right)\right)^{m - 1} \wedge \omega^{n - m} ,
\end{aligned}
\end{equation*}
and hence we can conclude from \eqref{equality-5-6} that fixing $\theta \in [0,1]$, 
\begin{equation}
\label{equality-5-7}
\begin{aligned}
	&
	\sqrt{-1}  \partial(\varphi_1 - \varphi_2) \wedge \bar\partial (\varphi_1 - \varphi_2) \\
	&\qquad \qquad \wedge (\chi + \tilde \chi +  \sqrt{-1}  \partial\bar\partial (\theta \varphi_1 + (1 - \theta) \varphi_2))^{m - 1} \wedge \omega^{n - m} 
	= 0
\end{aligned}
\end{equation}
almost everywhere on $M$. 
Applying Maclaurin's inequality to \eqref{equality-5-7}, for $2\leq k \leq m$
\begin{equation}
\label{equality-5-8}
\begin{aligned}
	&
	\sqrt{-1}  \partial(\varphi_1 - \varphi_2) \wedge \bar\partial (\varphi_1 - \varphi_2) \\
	&\qquad \qquad \wedge (\chi + \tilde \chi +  \sqrt{-1}  \partial\bar\partial (\theta \varphi_1 + (1 - \theta) \varphi_2))^{k - 1} \wedge \omega^{n - k} 
	= 0
	,
\end{aligned}
\end{equation}
almost everywhere on $M$ for $\theta \in [0,1]$. 
In particular,
\begin{equation}
\label{equality-5-9}
\begin{aligned}
	&
	\sqrt{-1} \partial(\varphi_1 - \varphi_2) \wedge \bar\partial (\varphi_1 - \varphi_2) \\
		&\qquad \qquad \wedge (\chi + \tilde \chi +  \sqrt{-1}  \partial\bar\partial (\theta \varphi_1 + (1 - \theta) \varphi_2)) \wedge \omega^{n - 2} 
		= 0 .
\end{aligned}
\end{equation}
%Integrating \eqref{equality-5-8} over $M \times [0,1]$,
%\begin{equation*}
%\begin{aligned}
%	0
%	&=
%	2 \int^1_0 \bigg(\int_M \sqrt{-1} \partial (\varphi_1 - \varphi_2) \wedge \bar\partial (\varphi_1 - \varphi_2) \\
%	&\qquad \qquad \qquad \qquad  \wedge (\chi + \tilde \chi + \sqrt{-1} \partial\bar\partial (\theta \varphi_1 + (1 - \theta ) \varphi_2))  \wedge \omega^{n - 2} \bigg) d\theta \\
%	&=  \int_M \sqrt{-1} \partial (\varphi_1 - \varphi_2) \wedge \bar\partial (\varphi_1 - \varphi_2)  \wedge (\chi + \tilde \chi + \sqrt{-1} \partial\bar\partial \varphi_1) \wedge \omega^{n - 2} \\
%	&\qquad + \int_M \sqrt{-1} \partial (\varphi_1 - \varphi_2) \wedge \bar\partial (\varphi_1 - \varphi_2)  \wedge  (\chi + \tilde \chi + \sqrt{-1} \partial\bar\partial\varphi_2) \wedge \omega^{n - 2} ,
%\end{aligned}
%\end{equation*}
Integrating \eqref{equality-5-9} over $M \times [0,1]$,
\begin{equation}
\label{equality-5-10}
\begin{aligned}
	0
	&=
	\int^1_0 \Bigg(\int_M 	\sqrt{-1} \partial(\varphi_1 - \varphi_2) \wedge \bar\partial (\varphi_1 - \varphi_2) \\
			&\qquad \qquad \qquad \qquad \wedge (\chi + \tilde \chi +  \sqrt{-1}  \partial\bar\partial (\theta \varphi_1 + (1 - \theta) \varphi_2)) \wedge \omega^{n - 2} \Bigg) d\theta\\
%	&= 
%	\int^1_0 \Bigg(\int_M 	\sqrt{-1} \partial(\varphi_1 - \varphi_2) \wedge \bar\partial (\varphi_1 - \varphi_2) \wedge (\chi + \tilde \chi) \wedge \omega^{n - 2} \Bigg) d\theta \\
%	&\qquad + 	\int^1_0 \Bigg(\int_M 	\sqrt{-1} \partial(\varphi_1 - \varphi_2) \wedge \bar\partial (\varphi_1 - \varphi_2) \\
%				&\qquad \qquad \qquad \qquad \quad \wedge   \sqrt{-1}  \partial\bar\partial (\theta \varphi_1 + (1 - \theta) \varphi_2) \wedge \omega^{n - 2} \Bigg) d\theta \\
	&= 
	 \int_M 	\sqrt{-1} \partial(\varphi_1 - \varphi_2) \wedge \bar\partial (\varphi_1 - \varphi_2) \wedge (\chi + \tilde \chi) \wedge \omega^{n - 2}  \\
	&\qquad + \frac{1}{2}	\int_M 	\sqrt{-1} \partial(\varphi_1 - \varphi_2) \wedge \bar\partial (\varphi_1 - \varphi_2)  \wedge   \sqrt{-1}  \partial\bar\partial ( \varphi_1 +   \varphi_2) \wedge \omega^{n - 2}  
	.
\end{aligned}
\end{equation}
Apply Stokes' Lemma,
\begin{equation}
\label{equality-5-11}
\begin{aligned}
	&\quad \int_M 	\sqrt{-1} \partial(\varphi_1 - \varphi_2) \wedge \bar\partial (\varphi_1 - \varphi_2)  \wedge   \sqrt{-1}  \partial\bar\partial ( \varphi_1 +   \varphi_2) \wedge \omega^{n - 2} \\
	&= \int_M 	\sqrt{-1} \partial(\varphi_1 - \varphi_2) \wedge \sqrt{-1}  \partial\bar\partial (\varphi_1 - \varphi_2)  \wedge   \bar\partial ( \varphi_1 +   \varphi_2) \wedge \omega^{n - 2} \\
	&= \int_M 	\sqrt{-1} \partial(\varphi_1 - \varphi_2)   \wedge   \bar\partial ( \varphi_1 +   \varphi_2) \wedge (\chi + \tilde \chi + \sqrt{-1} \partial\bar\partial \varphi_1) \wedge \omega^{n - 2} \\
	&\qquad - \int_M 	\sqrt{-1} \partial(\varphi_1 - \varphi_2)   \wedge   \bar\partial ( \varphi_1 +   \varphi_2) \wedge (\chi + \tilde \chi + \sqrt{-1} \partial\bar\partial \varphi_2) \wedge \omega^{n - 2} .
\end{aligned}
\end{equation}
By Cauchy-Schwarz inequality,
\begin{equation}
\label{inequality-5-12}
\begin{aligned}
	&\quad \left(\int_M 	\sqrt{-1} \partial(\varphi_1 - \varphi_2)   \wedge   \bar\partial ( \varphi_1 +   \varphi_2) \wedge (\chi + \tilde \chi + \sqrt{-1} \partial\bar\partial \varphi_1) \wedge \omega^{n - 2} \right)^2 \\
	&\leq \int_M 	\sqrt{-1} \partial(\varphi_1 - \varphi_2)   \wedge   \bar\partial ( \varphi_1 - \varphi_2) \wedge (\chi + \tilde \chi + \sqrt{-1} \partial\bar\partial \varphi_1) \wedge \omega^{n - 2} \\
	&\qquad \cdot \int_M 	\sqrt{-1} \partial(\varphi_1 + \varphi_2)   \wedge   \bar\partial ( \varphi_1 + \varphi_2) \wedge (\chi + \tilde \chi + \sqrt{-1} \partial\bar\partial \varphi_1) \wedge \omega^{n - 2} .
\end{aligned}
\end{equation}
By Stokes' Lemma again,
\begin{equation}
\label{inequality-5-13}
\begin{aligned}	
	0 
	&\leq \int_M 	\sqrt{-1} \partial(\varphi_1 + \varphi_2)   \wedge   \bar\partial ( \varphi_1 + \varphi_2) \wedge (\chi + \tilde \chi + \sqrt{-1} \partial\bar\partial \varphi_1) \wedge \omega^{n - 2} \\
	&= - \int_M (\varphi_1 + \varphi_2)   	\sqrt{-1} \partial \bar\partial ( \varphi_1 + \varphi_2) \wedge (\chi + \tilde \chi + \sqrt{-1} \partial\bar\partial \varphi_1) \wedge \omega^{n - 2} \\
	&= - 2 \int_M (\varphi_1 + \varphi_2)  \left( \chi + \tilde \chi +	\sqrt{-1} \partial \bar\partial \left(\frac{ \varphi_1 + \varphi_2}{2} \right)\right) \wedge (\chi + \tilde \chi + \sqrt{-1} \partial\bar\partial \varphi_1) \wedge \omega^{n - 2} \\
	&\qquad + 2 \int_M (\varphi_1 + \varphi_2)  \left( \chi + \tilde \chi  \right) \wedge (\chi + \tilde \chi + \sqrt{-1} \partial\bar\partial \varphi_1) \wedge \omega^{n - 2} \\
	&\leq 2 (\Vert\varphi_1\Vert_{L^\infty} + \Vert\varphi_2 \Vert_{L^\infty}) \int_M (\chi + \tilde \chi)^2 \wedge \omega^{n - 2}
	.
\end{aligned}
\end{equation}
Substituting \eqref{inequality-5-13} and \eqref{equality-5-9} into \eqref{inequality-5-12},
\begin{equation}
\label{equality-5-14}
	\int_M 	\sqrt{-1} \partial(\varphi_1 - \varphi_2)   \wedge   \bar\partial ( \varphi_1 +   \varphi_2) \wedge (\chi + \tilde \chi + \sqrt{-1} \partial\bar\partial \varphi_1) \wedge \omega^{n - 2} 
	= 0
	.
\end{equation}
Similarly, we also obtain
\begin{equation}
\label{equality-5-15}
	\int_M 	\sqrt{-1} \partial(\varphi_1 - \varphi_2)   \wedge   \bar\partial ( \varphi_1 +   \varphi_2) \wedge (\chi + \tilde \chi + \sqrt{-1} \partial\bar\partial \varphi_2) \wedge \omega^{n - 2} 
	= 0
	.
\end{equation}
Substituting \eqref{equality-5-14} and \eqref{equality-5-15} into \eqref{equality-5-11},
\begin{equation}
	\int_M \sqrt{-1} \partial (\varphi_1 - \varphi_2) \wedge \bar\partial (\varphi_1 - \varphi_2) \wedge \sqrt{-1} \partial\bar\partial (\varphi_1 + \varphi_2) \wedge \omega^{n - 2} 
	=
	0
	,
\end{equation}
and hence from \eqref{equality-5-10}, 
\begin{equation}
	\int_M \sqrt{-1} \partial (\varphi_1 - \varphi_2) \wedge \bar\partial (\varphi_1 - \varphi_2) \wedge (\chi + \tilde \chi) \wedge \omega^{n - 2} = 0 .
\end{equation}
%Therefore $\varphi_1 - \varphi_2$ must be constant in the ample locus.

\end{proof}

\medskip
\section{Uniform smoothness in case of nonnegative bisectional curvature}

%
%\begin{equation}
%	(\chi + \tilde \chi + t\omega + \sqrt{-1} \partial\bar\partial \varphi_t)^m \wedge \omega^{n - m} = e^{m b_t} e^{m f} \omega^n ,
%\end{equation}
%where $e^{ f}$ is smooth.

Suppose that $e^{ f}$ is smooth. 
Rewriting Equation~\eqref{equation-4-2},
\begin{equation}
\label{equation-6-1}
	S_m (X) 
	= 
	F_t 
	:= 
	C^m_n e^{m b_t} e^{m f} .
\end{equation}
Differentiating \eqref{equation-6-1}
\begin{equation}
\label{derivative-6-2}
	F_{t,l} = \sum_i S_{m - 1;i} X_{i\bar il}
\end{equation}
and
\begin{equation}
\label{derivative-6-3}
	F_{t, l\bar l} = - \sum_{i\neq j} S_{m - 2;ij} X_{i\bar jl} X_{j\bar i\bar l} + \sum_{i\neq j} S_{m - 2;ij} X_{j\bar jl} X_{i\bar i\bar l} + \sum_i S_{m - 1;i} X_{i\bar il\bar l}
\end{equation}
where
\begin{equation*}
	X_{i\bar jk} = \nabla X \left(\frac{\partial}{\partial z^i} , \frac{\partial}{\partial\bar z^j} , \frac{\partial}{\partial z^k} \right) , 
	X_{i\bar j k\bar l} = \nabla^2 X \left(\frac{\partial}{\partial z^i} , \frac{\partial}{\partial\bar z^j} , \frac{\partial}{\partial z^k} , \frac{\partial}{\partial\bar z^l} \right)  , 
	\cdots 
\end{equation*}
Since $S^{\frac{1}{m}}_m$ is concave,
\begin{equation*}
\begin{aligned}
	\sum_{i\neq j} S_{m - 2;ij} X_{j\bar jl} X_{i\bar i\bar l} 
	&\leq \frac{m - 1}{m S_m} \sum_{i,j} S_{m - 1;i} S_{m - 1;j} X_{j\bar jl} X_{i\bar i\bar l} 
	= \frac{m - 1}{m} \frac{F_{t,l} F_{t,\bar l}}{F_t}
%	\\
%	&= \left(1 - \frac{1}{m}\right) C^m_n e^{m b_t} \frac{e^{2 (m - 1)f} \partial_l e^f \bar\partial_l e^f}{e^{mf}} \\
%	&= \left(1 - \frac{1}{m}\right) C^m_n e^{m b_t}  e^{(m - 2)f} \partial_l e^f \bar\partial_l e^f 
	,
\end{aligned}
\end{equation*}
and hence from \eqref{derivative-6-2} and \eqref{derivative-6-3}, 
%\begin{equation}
%	F_{t, l\bar l} 
%%	= - \sum_{i\neq j} S_{m - 2;ij} X_{i\bar jl} X_{j\bar i\bar l} + \sum_{i\neq j} S_{m - 2;ij} X_{j\bar jl} X_{i\bar i\bar l} + \sum_i S_{m - 1;i} X_{i\bar il\bar l}
%	\leq - \sum_{i\neq j} S_{m - 2;ij} X_{i\bar jl} X_{j\bar i\bar l} +  \left(1 - \frac{1}{m}\right) \frac{F_{t,l} F_{t,\bar l}}{F_t} + \sum_i S_{m - 1;i} X_{i\bar il\bar l}
%	.
%\end{equation}
\begin{equation}
\label{inequality-6-4}
	F_{t, l\bar l}  - \frac{m - 1}{m}\frac{F_{t,l} F_{t,\bar l}}{F_t} +  \sum_{i\neq j} S_{m - 2;ij} X_{i\bar jl} X_{j\bar i\bar l} 
	\leq  \sum_i S_{m - 1;i} X_{i\bar il\bar l}
	.
\end{equation}

In the work of Boucksom~\cite{Boucksom2004}, %Demailly-Paun~\cite{Demailly-Paun},
 it is proven that there is a function $\rho$ and a constant $\kappa > 0$ such that $\rho$ is smooth in the ample locus $Amp (\tilde\chi)$ with analytic singularities, $\sup_M \rho = 0$
and
%\begin{equation*}
$\tilde\chi + \sqrt{-1}\partial\bar\partial\rho \geq \kappa \omega $.
%\end{equation*}
Then, we have
\begin{equation}
\label{inequality-6-5}
\begin{aligned}
	\sum_i S_{m - 1;i} \left(\rho_{i\bar i} - \varphi_{t,i\bar i}\right) 
%	&=
%	\sum_i S_{m - 1;i} \left(\chi_{i\bar i} + \tilde \chi_{i\bar i} + t\omega_{i\bar i} + \rho_{i\bar i}\right) - \sum_i S_{m - 1;i} \left(\chi_{i\bar i} + \tilde \chi_{i\bar i} + t\omega_{i\bar i} + \varphi_{t,i\bar i}\right) \\
	&\geq
	\sum_i S_{m - 1;i} \left(\chi_{i\bar i} + \kappa \omega_{i\bar i} + t\omega_{i\bar i}  \right) - \sum_i S_{m - 1;i} X_{i\bar i} \\
	&\geq \kappa \sum_i S_{m - 1;i} - m F_t 
%	\\
%	&\geq \kappa \sum_i S_{m - 1;i} - m F_1 
	.
\end{aligned}
\end{equation}

We shall consider function
$ 	w + A(\rho - \varphi_t) $, 
where $A$ is to be specified later and 
\begin{equation*}
w 
:= tr_\omega (\chi + \tilde \chi + t\omega + \sqrt{-1} \partial\bar\partial \varphi_t) = S_1 (X) .
\end{equation*}
Since $\rho$ tends to $-\infty$ when approaching the complement of $Amp (\tilde \chi)$, $w + A(\rho - \varphi_t)$ reaches its maximal value at some point $x_{max} \in M$. Differentiating $w + A(\rho - \varphi_t)$ twice,
\begin{equation}
	0 = \sum_k X_{k\bar ki} + A(\rho_i - \varphi_{t,i})
\end{equation}
and
\begin{equation}
\label{derivative-6-8}
	0 \geq \sum_k X_{k\bar ki\bar i} + A(\rho_{i\bar i} - \varphi_{t,i\bar i}) .
\end{equation}
Multiplying \eqref{derivative-6-8} by $S_{m-1;i}$ and summing them up,
\begin{equation}
\label{inequality-6-9}
\begin{aligned}
	0 
	&\geq \sum_{i,k} S_{m - 1;i} X_{k\bar ki\bar i} + A \sum_i S_{m - 1;i} (\rho_{i\bar i} - \varphi_{t,i\bar i}) \\
%	&= \sum_{i,k} S_{m - 1;i} X_{i\bar ik\bar k} + \sum_{i,k} S_{m - 1;i} \left(- R_{k\bar ki\bar i} X_{i\bar i} + R_{i\bar ik\bar k} X_{k\bar k} + G_{i\bar ik\bar k}\right) \\
%	&\qquad + A \sum_i S_{m - 1;i} (\rho_{i\bar i} - \varphi_{t,i\bar i}) \\
%	&= \sum_{i,k} S_{m - 1;i} X_{i\bar ik\bar k} + \sum_{i,k} S_{m - 1;i} R_{i\bar ik\bar k} \left(  X_{k\bar k}   -  X_{i\bar i} \right) + \sum_{i,k} S_{m - 1;i}  G_{i\bar ik\bar k}  \\
%	&\qquad + A \sum_i S_{m - 1;i} (\rho_{i\bar i} - \varphi_{t,i\bar i}) \\
%	&= \sum_{i,k} S_{m - 1;i} X_{i\bar ik\bar k} + \sum_{i\neq k} S_{m - 1;i} R_{i\bar ik\bar k} \left(  X_{k\bar k}   -  X_{i\bar i} \right)  \\
%	&\qquad + \sum_{i,k} S_{m - 1;i}  G_{i\bar ik\bar k}  + A \sum_i S_{m - 1;i} (\rho_{i\bar i} - \varphi_{t,i\bar i}) \\
%	&= \sum_{i,k} S_{m - 1;i} X_{i\bar ik\bar k} + \sum_{i < k} S_{m - 1;i} R_{i\bar ik\bar k} \left(  X_{k\bar k}   -  X_{i\bar i} \right)  + \sum_{i> k} S_{m - 1;i} R_{i\bar ik\bar k} \left(  X_{k\bar k}   -  X_{i\bar i} \right) \\
%	&\qquad + \sum_{i,k} S_{m - 1;i}  G_{i\bar ik\bar k}  + A \sum_i S_{m - 1;i} (\rho_{i\bar i} - \varphi_{t,i\bar i}) \\
%	&= \sum_{i,k} S_{m - 1;i} X_{i\bar ik\bar k} + \sum_{i < k} S_{m - 1;i} R_{i\bar ik\bar k} \left(  X_{k\bar k}   -  X_{i\bar i} \right)  + \sum_{k > i} S_{m - 1;k} R_{k\bar ki\bar i} \left(  X_{i\bar i}   -  X_{k\bar k} \right) \\
%	&\qquad + \sum_{i,k} S_{m - 1;i}  G_{i\bar ik\bar k}  + A \sum_i S_{m - 1;i} (\rho_{i\bar i} - \varphi_{t,i\bar i}) \\
	&= \sum_{i,k} S_{m - 1;i} X_{i\bar ik\bar k} + \sum_{i < k} \left(S_{m - 1;i} - S_{m-1;k} \right) R_{i\bar ik\bar k} \left(  X_{k\bar k}   -  X_{i\bar i} \right)  \\
	&\qquad + \sum_{i,k} S_{m - 1;i}  G_{i\bar ik\bar k}  + A \sum_i S_{m - 1;i} (\rho_{i\bar i} - \varphi_{t,i\bar i}) \\
	&\geq \Delta_\omega F_{t} - \frac{m - 1}{m} \frac{|\nabla F_t|^2}{F_t} + \sum_{i,k} S_{m - 1;i}  G_{i\bar ik\bar k}  + A \sum_i S_{m - 1;i} (\rho_{i\bar i} - \varphi_{t,i\bar i}) ,
\end{aligned}
\end{equation}
where
\begin{equation*}
	G_{i\bar ik\bar k} = \chi_{k\bar ki\bar i} + \tilde \chi_{k\bar ki\bar i} - \chi_{i\bar ik\bar k} - \tilde \chi_{i\bar ik\bar k} + \sum_k R_{k\bar ki\bar l} (\chi_{l\bar i} + \tilde \chi_{l\bar i}) - \sum_l R_{i\bar ik\bar l} (\chi_{l\bar k} + \tilde \chi_{l\bar k}) .
\end{equation*}
Substituting \eqref{inequality-6-5} into \eqref{inequality-6-9}, 
\begin{equation}
	- \Delta_\omega F_t + \frac{m - 1}{m} \frac{|\nabla F_t|^2}{F_t} + A \kappa m F_t + C (\chi, \tilde \chi, \omega) \sum_i S_{m - 1;i} \geq A \kappa  \sum_i  S_{m - 1;i}  .
\end{equation}
Choosing $A$ such that $A\kappa \geq C (\chi,\tilde \chi ,\omega) + 1$,
%\begin{equation}
%\begin{aligned}
%	- \Delta_\omega F_t + \frac{m - 1}{m} \frac{|\nabla F_t|^2}{F_t} + A \kappa m F_t   
%	&\geq   (n - m + 1) S_{m - 1} \\
%	&\geq 
%	m \left( \frac{(n - 1)!}{m! (n - m)!}\right)^{\frac{1}{m - 1}} S^{\frac{m - 2}{m - 1}}_m S^{\frac{1}{m - 1}}_1 \\
%	&=
%	m \left( \frac{(n - 1)!}{m! (n - m)!}\right)^{\frac{1}{m - 1}} F^{\frac{m - 2}{m - 1}}_t S^{\frac{1}{m - 1}}_1	
%	.
%\end{aligned}
%\end{equation}
%After simplification,
%\begin{equation}
%	- m C^m_n e^{mb_t} e^{(m - 1) f} \Delta_\omega e^f + A \kappa m C^m_n e^{mb_t} e^{mf} 
%	\geq 	
%	m \left( \frac{(n - 1)!}{m! (n - m)!}\right)^{\frac{1}{m - 1}} \left(C^m_n e^{mb_t} e^{mf}\right)^{\frac{m - 2}{m - 1}} S^{\frac{1}{m - 1}}_1	
%	. 
%\end{equation}
%\begin{equation}
%	- \left(C^m_n e^{mb_t} e^f \right)^{\frac{1}{m - 1}}  \Delta_\omega e^f + A \kappa  \left(C^m_n e^{mb_t} e^{mf} \right)^{\frac{1}{m - 1}} 
%	\geq 	
%	 \left( \frac{(n - 1)!}{m! (n - m)!}\right)^{\frac{1}{m - 1}}  S^{\frac{1}{m - 1}}_1	
%	. 
%\end{equation}
%\begin{equation}
%	- \left( e^{mb_t} e^f \right)^{\frac{1}{m - 1}}  \Delta_\omega e^f + A \kappa  \left( e^{mb_t} e^{mf} \right)^{\frac{1}{m - 1}} 
%	\geq 	
%	 \left(  \frac{1}{n}\right)^{\frac{1}{m - 1}}  S^{\frac{1}{m - 1}}_1	
%	. 
%\end{equation}
\begin{equation}
	2^{m - 2} n e^{mb_t} e^f \left( |\Delta_\omega e^f|^{m - 1} + A^{m - 1} \kappa^{m - 1} e^{(m - 1)f}\right)
	\geq 
	S_1
	. 
\end{equation}
Therefore,
\begin{equation}
\begin{aligned}
	w 
	& \leq  \left(w + A (\rho - \varphi_t)\right)\Big|_{z_{max}} - A (\rho - \varphi_t) \\
	& \leq \left(w - A  \varphi_t \right)\Big|_{z_{max}} - A  \rho \\
	& \leq C \left(\chi,\tilde \chi, \omega, \Vert e^{f}\Vert_{C^2}, - \inf_M \varphi_t\right) + A\rho .
\end{aligned}
\end{equation}

By Arzel\`a-Ascoli, the solution to Equation~\eqref{equation-1} is smooth in $Amp(\tilde \chi)$.

\begin{theorem}
Suppose that $(M,\omega)$ is a compact K\"ahler manifolds  without boundary of complex dimensional $n \geq 2$, whose bisectional curvature is nonnegative. 
Let $\chi \in \bar\Gamma^m_\omega$ be a closed real $(1,1)$-form, and $\tilde \chi$ be a semipositive and big closed $(1,1)$-form.
If $e^{f}$ is smooth,
then there is a  bounded solution in pluripotential sense to Equation~\eqref{equation-1}, which is smooth in  $Amp (\tilde\chi)$.

\end{theorem}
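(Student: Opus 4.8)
The plan is to obtain the solution as a locally smooth limit, over the ample locus $Amp(\tilde\chi)$, of the smooth admissible solutions $\varphi_t$ of the non-degenerate approximating equations~\eqref{equation-4-2}, arranged so that it simultaneously remains a decreasing limit and is therefore identified with the bounded pluripotential solution of~\eqref{equation-1} furnished in Section~4. Since $e^f$ is smooth and strictly positive on the compact manifold $M$, and $\chi+\tilde\chi+t\omega\in\Gamma^m_\omega$ for $t>0$, each equation~\eqref{equation-4-2} admits a smooth admissible solution $\varphi_t$ by the solvability theory for the complex Hessian equation on compact K\"ahler manifolds \cite{HouMaWu2010}; by Theorem~\ref{theorem-4-3} these satisfy $\|\varphi_t\|_{L^\infty}\le L$ with $L$ independent of $t\in(0,1]$, and by~\eqref{inequality-4-3} the constants $b_t$ are uniformly bounded with $b_t\to0$ as $t\to0$, so the right-hand sides $C^m_n e^{mb_t}e^{mf}$ stay between two fixed positive constants $c_0\le C_0$.

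The next step is the interior a priori estimate, which is exactly the computation carried out in this section: with $X=\chi+\tilde\chi+t\omega+\sqrt{-1}\partial\bar\partial\varphi_t$ and $w=S_1(X)=tr_\omega X$, the function $w+A(\rho-\varphi_t)$ built from Boucksom's potential $\rho$ attains its maximum in $Amp(\tilde\chi)$, and the differential inequality obtained from~\eqref{inequality-6-9} and~\eqref{inequality-6-5} yields $w\le C(\chi,\tilde\chi,\omega,\|e^f\|_{C^2},L)-A\rho$ on $Amp(\tilde\chi)$; hence $\sup_K w\le C_K$ uniformly in $t$ on every compact $K\subset Amp(\tilde\chi)$. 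Because the eigenvalues of $X$ lie in $\Gamma^m$, the identity $S_1^2=|\lambda(X)|^2+2S_2(X)$ with $S_2(X)>0$ gives $|\lambda(X)|<w$, so this becomes a uniform $C^2(K)$ bound for $\varphi_t$. On $K$ the eigenvalues then lie in the set $\{\lambda\in\Gamma^m:|\lambda|\le C_K,\ S_m(\lambda)\ge c_0\}$, which by Maclaurin's inequality satisfies $S_j(\lambda)\ge c_j>0$ for all $j\le m$ and is therefore a compact subset of the open cone $\Gamma^m$; on it $F=S_m^{1/m}$ is smooth, concave and uniformly elliptic. Evans--Krylov then gives uniform $C^{2,\alpha}_{loc}(Amp(\tilde\chi))$ bounds, and differentiating~\eqref{equation-6-1} and iterating the Schauder estimates gives uniform $C^k_{loc}(Amp(\tilde\chi))$ bounds for every $k$, all independent of $t$.

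Finally, I would take the decreasing subsequence $\psi_i:=\varphi_{t_i}+2^{-i}\searrow\varphi$ from Section~4, so that $\varphi$ is a bounded pluripotential solution of~\eqref{equation-1}, and extract from $\{\varphi_{t_i}\}$, using the uniform local estimates and Arzel\`a--Ascoli, a further subsequence converging in $C^\infty_{loc}(Amp(\tilde\chi))$ to a smooth function $g$ solving $(\chi+\tilde\chi+\sqrt{-1}\partial\bar\partial g)^m\wedge\omega^{n-m}=e^{mf}\omega^n$ there (using $b_{t_i}\to0$). Along that subsequence $\psi_i\to g$ pointwise on $Amp(\tilde\chi)$ while $\psi_i\searrow\varphi$ everywhere, so $\varphi=g$ on $Amp(\tilde\chi)$, and hence the bounded pluripotential solution $\varphi$ is smooth there. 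The substantive difficulty, the interior second-order estimate through the Boucksom barrier, has already been handled in the text; the one remaining point requiring care is precisely this last matching step --- extracting the $C^\infty_{loc}$-convergent subsequence from the very sequence that decreases to the global pluripotential solution, so that the smooth local limit is identified with the global bounded solution rather than with some merely local solution of the equation.
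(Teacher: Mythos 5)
Your argument follows the paper's proof essentially verbatim: approximate by the smooth solutions $\varphi_t$ of~\eqref{equation-4-2}, use the uniform $L^\infty$ bound of Theorem~\ref{theorem-4-3}, run the maximum-principle computation on $w + A(\rho - \varphi_t)$ with Boucksom's barrier $\rho$ to obtain the interior Laplacian bound $w \le C - A\rho$ on $Amp(\tilde\chi)$, and pass to a $C^\infty_{loc}(Amp(\tilde\chi))$ limit along a subsequence. You correctly place the nonnegative bisectional curvature assumption inside the step~\eqref{inequality-6-9}, where the term $\sum_{i<k}(S_{m-1;i}-S_{m-1;k})R_{i\bar ik\bar k}(X_{k\bar k}-X_{i\bar i})\ge 0$ is discarded, and you correctly use $b_t\to 0$ to identify the limiting equation. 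The only thing you add beyond the paper is to spell out what the paper leaves to "Arzel\`a--Ascoli": the passage from the trace bound $w=S_1(X)$ to $|\lambda(X)|<w$ (via $S_1^2=|\lambda|^2+2S_2$, valid since $m\ge 2$), the observation that the eigenvalues then stay in a compact subset of $\Gamma^m$ (via Maclaurin and the positive lower bound $S_m\ge c_0$) so that $S_m^{1/m}$ is uniformly elliptic, the Evans--Krylov and Schauder bootstrap to $C^k_{loc}$, and the care needed to extract the $C^\infty_{loc}$-convergent subsequence from the very sequence $\varphi_{t_i}$ whose shift decreases to the global pluripotential solution, so that the smooth local limit agrees with $\varphi$ on $Amp(\tilde\chi)$. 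These are the right details and your filling-in is sound.
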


By Theorem~\ref{theorem-4-4}, we can prove the following continuity result. 
\begin{corollary}
Suppose that $(M,\omega)$ is a compact K\"ahler manifolds  without boundary of complex dimensional $n \geq 2$, whose bisectional curvature is nonnegative. 
Let $\chi \in \bar\Gamma^m_\omega$ be a closed real $(1,1)$-form, and $\tilde \chi$ be a semipositive and big closed $(1,1)$-form.
For $q > 1$ and $e^{nf} \in L^q$,
there is a  bounded solution in pluripotential sense to Equation~\eqref{equation-1}, which is continuous in  $Amp (\tilde\chi)$.
\end{corollary}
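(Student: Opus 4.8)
The plan is to bootstrap from the smooth-data theorem just proved, using the quantitative stability estimate of Theorem~\ref{theorem-4-4}, in exactly the way the continuous pluripotential solution was built in Theorem~\ref{theorem-3-11}. First I would fix an approximation: since $e^{nf}\in L^q$ with $q>1$, choose positive smooth $f_j$ with
\[
\int_M e^{mf_j}\,\omega^n=\int_M(\chi+\tilde\chi)^m\wedge\omega^{n-m},\qquad \|e^{nf_j}\|_{L^q}\le K
\]
for a fixed $K$, and with $\|e^{mf_j}-e^{mf}\|_{L^1}\to0$ as fast as we like, say
\[
\bigl\|e^{mf_{j+1}}-e^{mf_j}\bigr\|_{L^1}<\gamma\!\left(2^{-j}\right)\,2^{-j\left(\frac{(n+1)(nq-m)}{n(q-1)}+2\right)},
\]
where $\gamma$ is the modulus in Theorem~\ref{theorem-4-4}; such a sequence exists by mollifying and rescaling $e^{mf}+\varepsilon_j$ as in the proof of Theorem~\ref{theorem-3-11}.

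For each $j$, apply the theorem preceding this corollary (the case $e^f$ smooth) to get a bounded pluripotential solution $\varphi_j$ of Equation~\eqref{equation-1} with data $f_j$, normalized by $\sup_M\varphi_j=0$ and smooth on $Amp(\tilde\chi)$; recall it is produced there as a uniform limit $\varphi_j=\lim_{t\to0}\varphi_{j,t}$ of admissible solutions $\varphi_{j,t}$ to the auxiliary equations~\eqref{equation-4-2}. For fixed $t\in(0,1]$ the (renormalized) pair $\varphi_{j,t},\varphi_{k,t}$ satisfies the hypotheses of Theorem~\ref{theorem-4-4}, whose constant is independent of $t$; reverting the normalization using $\sup_M\varphi_{j,t}=\sup_M\varphi_{k,t}=0$ (which forces the additive constant to be $O(Cr)$) gives $\|\varphi_{j,t}-\varphi_{k,t}\|_{L^\infty}\le 2Cr$ whenever $\|e^{mf_j}-e^{mf_k}\|_{L^1}<\gamma(r)r^{\frac{(n+1)(nq-m)}{n(q-1)}+2}$. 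Letting $t\to0$ yields the same bound for $\varphi_j,\varphi_k$, so with the approximation rate above $\{\varphi_j\}$ is Cauchy in $C^0(M)$; let $\varphi=\lim_j\varphi_j$ uniformly on $M$. Then $\varphi$ is bounded, $\sup_M\varphi=0$, and—being a uniform limit of functions continuous on the open set $Amp(\tilde\chi)$—it is continuous there.

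It remains to check $\varphi$ solves the equation. Passing to a subsequence with $\|\varphi_j-\varphi\|_{L^\infty}\le 2^{-j}$, the sequence $\{\varphi_j+2^{-j+1}\}$ decreases to $\varphi$; since each $\varphi_j$ is a smooth-approximable $m$-$(\chi+\tilde\chi,\omega)$-subharmonic function and $e^{mf_j}\omega^n\to e^{mf}\omega^n$, continuity of the complex Hessian (mixed Monge--Amp\`ere) operator along decreasing sequences of bounded admissible functions gives $(\chi+\tilde\chi+\sqrt{-1}\partial\bar\partial\varphi)^m\wedge\omega^{n-m}=e^{mf}\omega^n$ in pluripotential sense, with the integral compatibility inherited; uniqueness is the content of Section~5.

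The main obstacle is making the stability estimate of Theorem~\ref{theorem-4-4}, which is stated for the non-degenerate auxiliary equations~\eqref{equation-4-2}, survive the limit $t\to0$ with a constant that does not blow up: this is precisely where the uniform $L^\infty$ bound of Theorem~\ref{theorem-4-3} and the $t$-independence of the constant in Theorem~\ref{theorem-4-4} enter, and it is the reason one argues with the regularized solutions $\varphi_{j,t}$ rather than directly with $\varphi_j$. A secondary point, which costs only a factor of $2$, is reconciling the normalization $\sup_M(\varphi_1-\varphi_2)=\sup_M(\varphi_2-\varphi_1)$ used in Theorem~\ref{theorem-4-4} with the normalization $\sup_M\varphi_j=0$.
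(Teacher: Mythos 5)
Your proposal is correct and is exactly the argument the paper has in mind (the paper itself only says ``By Theorem~\ref{theorem-4-4}, we can prove the following continuity result''): approximate $e^{mf}$ by smooth data with fixed total mass and uniform $L^q$ control, apply the stability estimate of Theorem~\ref{theorem-4-4} at fixed $t>0$ to the regularized solutions $\varphi_{j,t}$ of~\eqref{equation-4-2} (where the constant is $t$-independent thanks to Theorem~\ref{theorem-4-3}), pass to the limit $t\to 0$, conclude that the smooth-data pluripotential solutions $\varphi_j$ form a Cauchy sequence in $C^0(M)$, and observe that the uniform limit of functions smooth on $Amp(\tilde\chi)$ is continuous there and, via a decreasing subsequence, still solves the equation in pluripotential sense. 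Your handling of the normalization mismatch (reverting from $\sup_M(\varphi_1-\varphi_2)=\sup_M(\varphi_2-\varphi_1)$ to $\sup_M\varphi_j=0$ at the cost of a factor $2$) and of the need to argue at the level of $\varphi_{j,t}$ rather than $\varphi_j$ are precisely the two points worth flagging; the only blemish is a harmless arithmetic slip in the shifts $2^{-j+1}$ used to manufacture a decreasing sequence, which is fixed by taking a slightly faster subsequence.
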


\medskip

\noindent
{\bf Acknowledgements}\quad
The author wish to thank Chengjian Yao, Ziyu Zhang and Long Li for their helpful discussions and suggestions. The  author is supported by a start-up grant from ShanghaiTech University.

\medskip

\end{document}